\newtheorem{theorem}{Theorem}[section]
\newtheorem{proposition}{Proposition}[section]
\newtheorem{cor}{Corollary}[section]
\newtheorem{definition}{Definition}[section]
\newtheorem{lemma}{Lemma}[section]
\newtheorem{remark}{Remark}[section]
\def\CC{{\mathbf C}}
\def\RR{{\mathbf R}}
\def\val{\mbox{val}}
\def\NN{{\mathbf N}}
\def\SS{{\mathbf S}}
\def\BT{\hat{\mathcal B}}
\def\LT{{\mathcal L}}
\def\CT{{\mathcal T}}
\def\CS{{\mathcal S}}
\def\GG{{\mathcal G}}
\def\EE{{\mathcal E}}
\def\PP{{\mathcal P}}
\def\HH{{\mathcal H}}
\def\OO{{\mathcal O}}
\def\xx{<}
\def\dd{>}
\def\bl{{\mathbf \ell}}
\title[Nagumo norms and summability of singular PDEs]{Exponential type Nagumo norms and summability of formal solutions of singular
partial differential equations}
\author{Zhuangchu LUO}
\address{School of Mathematics and Statistics, Wuhan
University, Wuhan 430072, China}
\email{zhuangchu@yahoo.com.cn}
\author{Hua CHEN}
\address{School of Mathematics and Statistics, Wuhan
University, Wuhan 430072, China}
\email{chenhua@whu.edu.cn}
\author{Changgui ZHANG}
\address{ Laboratoire P. Painlev\'e (UMR -- CNRS 8524), UFR
Math., Universit\'e de Lille 1, Cit\'e scientifique, 59655
Villeneuve d'Ascq cedex, France}
\email{zhang@math.univ-lille1.fr}
\begin{document}

\maketitle

\begin{abstract}
In this paper, we study a class of first order nonlinear
degenerated partial differential equations with
singularity at $(t,x)=(0,0)\in \CC^2$. By means of exponential type Nagumo norm approach, Gevrey asymptotic analysis extends to case of holomorphic parameters by a natural way. A sharp condition is
then established to deduce the $k$-summability for the formal solutions. Furthermore, analytical solutions in conical domains are found for each type of these nonlinear singular PDEs.
\end{abstract}

\begin{abstract}[R\'esum\'e]
 bb
\end{abstract}

\tableofcontents

\numberwithin{equation}{section}
\section{Introduction}\label{section:introduction}

As early as in 1913, Gevrey \cite{Ge} studied following
forward-backward diffusion equations
\begin{equation}\label{equation:Gevrey}
A(t,x)u_x-B(t,x)u_{tt}+(\mbox{lower order terms})=f,
\end{equation}
where the coefficient $A(t,x)$ changes sign through the line
$A(t,x)=0$. Later, this kind of equations has been used widely, e.g.
to deal with the so-called ``counter-current convection diffusion''
process which appeared from some physical or chemical problems. Even
for the simplest forms of the degenerated equation
\eqref{equation:Gevrey}, such as
$$
xu_x-\frac{1}{2} u_{tt}+tu_t=0, \ \  tu_x-u_{tt}=0,
$$
and
$$
x^3u_x-x^2u_{tt}-tu_t=f(t,x),
$$
we can also find some interesting applications in kinetic theory and
stochastic processes (cf. \cite{HO,PT} and references
therein), these examples would be covered by more general
degenerated PDEs, such as
\begin{equation}\label{equation:m}
(t\partial _t)^m
u=F(t,x,(t\partial_t)^i\partial_x^ju),
\end{equation}
where one may assume the indices $i$, $j$ to be such that $in+jm\le
mn$ and $ i<m$, with some positive integers $m$ and $n$.
Note also that several reaction-diffusion equations \cite{FM} of
type
\begin{equation*}\label{equation:reaction-diffusion}
\partial_tu=\partial_{xx}u+f(u)
\end{equation*}  can be written in a form of the \eqref{equation:m} while the time
variable $t$ is put into a suitable ``exponential scale" $t\mapsto
\tau=e^{\lambda t}$.

In this paper, only the case of $m=1$, $n=1$ will be considered for the partial differential equation \eqref{equation:m} under the initial condition $u(0,x)=0$ and the
approach used in the following can be expected to be applied to
general cases. More precisely, we will suppose that $F(t,x,u,v)$ be a function holomorphic
at ${\bf 0}\in\CC^4$ such that $F(0,x,0,0)\equiv 0$. Then, equation
\eqref{equation:m} can be written into the following form:
\begin{eqnarray}\label{equation:1}
&&t\partial_t u=a(x)t+b(x)u+\gamma (x)\partial_xu+\\
&&\qquad\qquad\qquad\qquad\sum_{i+j+\alpha\ge
2}a_{i,j,\alpha}(x)t^iu^j(\partial_xu)^\alpha,\qquad u(0,x)=0,\nonumber
\end{eqnarray}
where $a(x)$, $b(x)$, $\gamma(x)$, $a_{i,j,\alpha}(x)$ are holomorphic on an
open disc centered at $0\in\CC$.

The existence and uniqueness of
holomorphic solution of \eqref{equation:1} depend mainly on the
valuation of the function $\gamma$ at $x=0$ (see \cite{GT}, chapters
5, 6 and \cite{CLT}). So,
let $p=\val(\gamma)$ be the valuation of $\gamma(x)$ at $x=0$. For the case $p=1$, the existence
and uniqueness of holomorphic solutions of (\ref{equation:1}) are proved in \cite{CT1,CT2,CLH}. For
the case $2\le p<\infty$, if the following condition $(F)$ is satisfied:
$$
b(0)\notin \NN^*=\{1,2,3,...\}\quad\hbox{and}\quad a_{i,j,\alpha}(0)=0,\ \forall\ \alpha>0,\leqno(F)
$$
then, thanks to Theorem 1.2 of \cite{CLT}, the equation \eqref{equation:1} has a unique power
series solution, which is convergent in $t$ and divergent in $x$ with Gevrey order $1/k$ or $1+1/k$ according to convention of \cite{CLT} ($k=p-1$).

\subsection{Main results}\label{subsection:mainresults}

For convenience, we rewrite $\gamma(x)$ as $x^{k+1} c(x)$ and let $c=c(0)$, $b=b(0)$, with $c\not=0$ . One main result of this paper may be the following

\begin{theorem}\label{theorem:yes}
Under the condition $(F)$, the equation \eqref{equation:1}
 has a unique formal solution $\hat u(t,x)$, which is convergent in $t$ and $k$-summable in all
 directions of
the $x$-plane except at most a countable directions belonging to the following set:
\begin{equation}\label{equation:SDbck}
SD_{b,c;k}:=\bigcup_{\nu=0}^{k-1}\bigg\{\frac{\arg({z})+{2\nu\pi}}k:
z\in\big\{\frac 1c, \frac{1-b}c,\frac{2-b}c,\frac{3-b}c,\cdots\big\}\bigg\}.
\end{equation}
\end{theorem}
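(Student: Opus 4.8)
The plan is to prove $k$-summability of the formal solution in a given direction $d$ by establishing the two defining ingredients: (i) the formal solution $\hat u(t,x) = \sum_n u_n(t) x^n$ is Gevrey of order $1/k$ in $x$ (with $u_n(t)$ holomorphic in $t$), and (ii) its formal Borel transform of order $k$ in $x$ extends analytically in a sector around $d$ with at most exponential growth of order $k$. Ingredient (i) is already available from Theorem 1.2 of \cite{CLT} as quoted, so the real work is ingredient (ii), and the exceptional directions in $SD_{b,c;k}$ will emerge precisely as the places where the Borel-plane analytic continuation fails.

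First I would normalize the equation. Writing $\gamma(x)=x^{k+1}c(x)$ with $c(0)=c\neq 0$ and isolating the leading linear part, the equation \eqref{equation:1} takes the schematic form $t\partial_t u = a(x)t + b(x)u + x^{k+1}c(x)\partial_x u + N(t,x,u,\partial_x u)$, where $N$ collects the higher-order terms (which carry no $\partial_x u$ at $x=0$ by condition $(F)$). The Borel transform of order $k$ in the variable $x$ turns the irregular operator $x^{k+1}\partial_x$ into a regular-singular convolution/differential operator in the Borel variable $\xi$; concretely $x^{k+1}\partial_x$ corresponds, up to the standard normalization, to multiplication by $-\xi^k$ together with a shift governed by $c$, so that the principal symbol controlling the singularities is $c\,\xi^k - (b-m)$ for the relevant integers $m$. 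Setting this symbol to zero and solving for $\xi^k$ gives exactly the points $\tfrac1c,\tfrac{1-b}{c},\tfrac{2-b}{c},\dots$, whose $k$-th roots, rotated through the $2\nu\pi/k$ ambient of the $k$-Borel plane, produce the set $SD_{b,c;k}$ of \eqref{equation:SDbck}. This is the conceptual heart: the exceptional directions are the arguments of the singularities of the Borel transform.

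The technical engine is the exponential type Nagumo norm announced in the abstract. The plan is to set up, in the Borel plane, a majorant/fixed-point scheme on spaces of functions holomorphic in a sector $S_d$ (avoiding the singular locus) and equipped with norms of the form $\sup |v(\xi)|\,e^{-A|\xi|^k}$ weighted by Nagumo factors that measure distance to the boundary of the disc in $x$. I would first solve the linearized Borel-plane equation, checking that away from the zeros of $c\,\xi^k-(b-m)$ the resolvent is bounded and preserves the exponential-type class, and then treat the nonlinearity $N$ by a contraction argument: because $(F)$ kills the $\partial_x u$ terms at $x=0$, the nonlinear convolutions gain enough regularity that a fixed point exists in the weighted space on $S_d$, yielding analytic continuation of the Borel transform with the required $\exp(A|\xi|^k)$ bound. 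Applying the $k$-Laplace transform in the direction $d$ then produces a genuine holomorphic solution whose asymptotic expansion is $\hat u$, giving $k$-summability in every direction $d\notin SD_{b,c;k}$.

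The main obstacle I expect is controlling the nonlinear convolution terms in the exponential-type Nagumo norm uniformly in $x$ (equivalently in the holomorphic parameter), i.e. proving that the fixed-point map is a well-defined contraction on the weighted sectorial space without losing the exponential order $k$ or the Gevrey bounds. Tracking the convolution estimates so that the constant $A$ in $e^{A|\xi|^k}$ stays finite across the composition with $N$ — and verifying that the only obstructions to continuation are the isolated symbol-zeros rather than accumulation phenomena — is the delicate quantitative step; everything else is bookkeeping around the explicit symbol computation that identifies $SD_{b,c;k}$.
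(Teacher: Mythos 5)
Your proposal follows the same skeleton as the paper's proof --- $k$-Borel transform with respect to $x$, singular directions read off from the zeros of the symbol $n-b-c\xi^k$ (i.e.\ from $\xi^k\in\{(n-b)/c\}$, together with the accumulation direction $\arg(1/c)$ needed for the uniform lower bound \eqref{equation:nbcsigma}), weighted exponential/Nagumo norms in the Borel plane, a fixed-point argument, and the $k$-Laplace transform to conclude. But there is a genuine gap at exactly the point you defer as ``the delicate quantitative step.'' A contraction on a single weighted space $\sup|v(\xi)|e^{-A|\xi|^k}$ over a sector cannot absorb the nonlinearity: the Borel transform of $(\partial_x u)^\alpha$ involves $\bigl(\partial_\xi(\xi\tilde u)\bigr)^{*\alpha}$, and $\partial_\xi$ is an unbounded operator on any one such space, so ``the nonlinear convolutions gain enough regularity'' is not true as stated --- this is a loss-of-derivatives problem that convolution alone does not repair. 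The paper's whole mechanism exists to overcome precisely this: it expands $\tilde u=\sum_{n\ge1}\tilde u_n(\xi)t^n$ and measures the $n$-th coefficient in the \emph{graded} space $\EE_{S,\mu,n-1}$, whose Nagumo index grows with the power of $t$. The Key Lemma~\ref{lemma:key} then trades one $\xi$-derivative for one Nagumo index against the factor $(n-b-c\xi)$ (this is where \eqref{equation:nbcsigma} is used), the convolution inequality \eqref{equation:fgmunn'} adds indices, and the identity $(i+j+\alpha-2)+\sum_\ell(h_\ell-1)+\sum_l(m_l-1)=i+|{\bf h}|+|{\bf m}|-2$ makes the indices match across the recursion \eqref{equation:utx*n}; each equation for $\tilde u_n$ (linear, given $\tilde u_1,\dots,\tilde u_{n-1}$) is solved by the contraction of Lemma~\ref{lemma:leg2}, and the resulting norms are summed by a majorant series via the implicit function theorem applied to \eqref{equation:Y}, which feeds into condition \eqref{item:ksummableexpansion3} of Theorem~\ref{theorem:ksummableexpansion}. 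None of this graded structure appears in, or can be replaced by, your single-space scheme.

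A second, smaller omission: condition $(F)$, namely $a_{i,j,\alpha}(0)=0$ for $\alpha>0$, does not by itself put the equation in a form the Borel machinery can digest; it gives valuation $\ge1$, not $\ge\alpha$, for the coefficient of $(\partial_xu)^\alpha$. The paper first performs the analytic change of unknown $u\mapsto v+xu$ of Proposition~\ref{proposition:conditionF}, with $v$ a sufficiently long truncation of the formal solution, to reach the prepared form \eqref{equation:vu}, in which $\partial_xu$ occurs only through $x\partial_xu$ (whose Borel transform is the benign $\partial_\xi(\xi\,\tilde u)$) and $\val(\tilde a)\ge k$, $\val(\tilde a_{i,0,0})\ge k$. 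Your normalization skips this preparation, and without it the recursion \eqref{equation:utx*n} and the index bookkeeping above do not even get started. Two minor slips in the same direction: the Nagumo factors measure the (angular) distance to the boundary of the sector in the $\xi$-plane, not ``of the disc in $x$,'' and the uniformity required is in the holomorphic parameter $t$, not in $x$.
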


On the other hand, if the condition $(F)$ is not satisfied,  the
formal power series solution may be divergent in both variables $t$
and $x$. For example, the following nonlinear partial differential equation
\begin{equation}\label{equation:no}
t\partial_t u=a(x)t+x^2\partial_x u+t(\partial_x u)^2,\qquad
u(0,x)=0
\end{equation}
has a unique formal solution in the Gevrey type power series space $\CC[[t,x]]_{\frac
12, 1}$ if $a(x)\not\equiv a(0)$ and $\val(a(x))\leq 1$ (see \cite{CLT}).

\begin{theorem}\label{theorem:no}
Consider the equation (\ref{equation:no}) and suppose that
$a(x)\not\equiv a(0)$ and $\val(a)=0$ or $1$. Let
$$\hat u(t,x)=\sum_{n\ge
0}v_n(x)t^{n+1}$$ be the formal solution of (\ref{equation:no}) and denote 
$$\hat U(\tau,x)=\sum_{n\ge
0}\frac{v_n(x)}{\Gamma(\frac{ n+1}2)}\tau^n$$ 
as the formal $2$-Borel transform of $\hat u(t,x)$ 
on $t$. Then the power series $\hat U$ is convergent in $\tau$ and
Borel summable with respect to the variable $x$ in any direction
excepted in $\RR^+$.
\end{theorem}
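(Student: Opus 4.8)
The plan is to combine the two summability mechanisms that are hidden in \eqref{equation:no}: a $2$-Borel transform in $t$ to cure the divergence in $t$, and a $1$-Borel transform in $x$ to cure the divergence in $x$, and then to locate every $x$-Borel singularity on $\RR^+$. First I would insert $\hat u=\sum_{n\ge0}v_n(x)t^{n+1}$ into \eqref{equation:no} and identify the coefficients of $t^{n+1}$. This yields $x^2v_0'-v_0=-a(x)$, $x^2v_1'-2v_1=0$ (whence $v_1\equiv0$), and, for $n\ge2$, the linear inhomogeneous ODE
\begin{equation*}
x^2v_n'-(n+1)v_n=-\sum_{i+j=n-2}v_i'(x)\,v_j'(x).
\end{equation*}
Each $v_n$ is thus the unique formal-power-series solution at $x=0$ of an irregular singular ODE whose homogeneous solution is $e^{-(n+1)/x}$. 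This already signals that $v_n$ is Gevrey of order $1$ in $x$, and that under the $x$-Borel transform the exponent $-(n+1)/x$ produces a singularity located at $\zeta=n+1\in\RR^+$, in exact analogy with the singular set $SD_{b,c;k}$ of Theorem \ref{theorem:yes} (here $c=1$, $b=0$, $k=1$, so that all candidate directions collapse onto $\RR^+$).

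For the convergence of $\hat U$ in $\tau$, I would use the cited membership $\hat u\in\CC[[t,x]]_{\frac12,1}$, which gives Gevrey-$\tfrac12$ growth in $n$; to render this uniform and exploitable I would push the recursion through the exponential type Nagumo norm $\|\cdot\|$ developed earlier in the paper and prove, by induction on $n$, an estimate of the form $\|v_n\|\le CA^n\,\Gamma(\tfrac{n+1}2)$. The half-factorial gain comes from the factor $(n+1)$ on the left-hand side of the ODE together with the fact that the convolution sum defining the inhomogeneity has $O(n)$ terms, while the derivative loss in $v_i'v_j'$ is absorbed by the Nagumo derivative inequality. Dividing by $\Gamma(\tfrac{n+1}2)$ and summing the resulting geometric series in $n$ then shows that $\hat U(\tau,x)=\sum_{n\ge0}v_n(x)\tau^n/\Gamma(\tfrac{n+1}2)$ converges for $\tau$ in a disc, its coefficients remaining Gevrey-$1$ in $x$.

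For the summability in $x$, I would apply the $1$-Borel transform $x\mapsto\zeta$ to $\hat U$, turning the products $v_i'v_j'$ into convolutions in $\zeta$ and each ODE into a convolution equation in the Borel plane. Proceeding by induction on $n$, I would show that the $x$-Borel transform of $v_n$ continues analytically along every ray $\arg\zeta=\theta$ with $\theta\neq0$ and carries singularities only at points of $\RR^+$: the homogeneous part contributes $\zeta=n+1$, and the convolution of two functions whose singularities lie on $\RR^+$ again has its singularities on $\RR^+$, because $\RR^+$ is stable under addition. Equipping the Borel plane with the exponential type Nagumo norm would at the same time deliver the exponential growth bounds required for the Laplace integral. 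A Laplace transform in $x$ along any direction $\theta\neq0$ then reconstructs a function $U_\theta(\tau,x)$, holomorphic in $x$ on a half-plane bisected by $\theta$ and admitting $\hat U$ as its Gevrey-$1$ asymptotic expansion; this is precisely the Borel summability of $\hat U$ in every direction except $\RR^+$.

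The main difficulty I anticipate is the uniform control of the nonlinear convolution across all $n$ directly in the Borel plane: one must establish simultaneously the exponential-growth estimates needed for the Laplace integral and the confinement of every singularity to $\RR^+$, that is, that the iterated self-convolutions neither create singularities off the positive axis nor spoil the $\Gamma(\tfrac{n+1}2)$-scaling. I expect this to require a majorant, Gronwall-type argument carried out in the exponential type Nagumo norm, tracking the growth in the $\zeta$-variable and the dependence on $n$ at the same time.
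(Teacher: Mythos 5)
Your architecture is, up to re-indexing, the paper's own: the paper passes to $s=t^2$, $w=tu$ (equation \eqref{equation:proofnoformalw}), Borel-transforms in $x$, and expands in powers of $s$ to obtain the convolution recursion \eqref{equation:proofnoconvolution} with the factor $(2m+1-\xi)$ --- which is your $(\xi-(n+1))$, since the odd coefficients vanish ($v_1\equiv 0$ propagates to all odd $n$, so your direct $t$-expansion and the paper's $s$-expansion carry the same information. Your recursion $x^2v_n'-(n+1)v_n=-\sum_{i+j=n-2}v_i'v_j'$ is correct, and the Nagumo bookkeeping you sketch (index growing linearly in $n$, each Borel-plane derivative costing a factor $n$) is exactly Corollary~\ref{cor:keyR} as the paper uses it; also, tracking singularity locations by stability of $\RR^+$ under addition is not even needed, since one works on sectors $S(R;d,\theta)$ avoiding $\RR^+$ where $|n+1-\xi|\ge\sigma(n+1+|\xi|)$ holds. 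So the issue is not the route but the decisive estimate, where your plan has a genuine gap.

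The gap has two intertwined parts. First, before Borel-transforming in $x$ one must strip off the $x^0$- and $x^1$-parts of the solution (the paper substitutes $w\mapsto \hat w_0(s)+\hat w_1(s)x+w$ to get \eqref{equation:proofnow}); the stripped coefficient $\hat w_1(s)$ is itself a \emph{divergent} Gevrey series, and it re-enters the Borel-plane recursion \eqref{equation:proofnoconvolution} as the linear terms $2\sum_{\ell}w_{\ell,1}\,\PP\tilde v_{m-\ell-1}$, whose numerical coefficients $w_{\ell,1}$ grow factorially. In your formulation these terms are hidden inside the constant terms of the factors $v_i'$, and your estimate sketch (which only accounts for the quadratic self-convolutions) never treats them. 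Second, precisely because the inhomogeneous data $\tilde\alpha_m$ and the coefficients $w_{\ell,1}$ grow at the \emph{same} rate as the bound $CA^n\Gamma(\frac{n+1}{2})$ you want to propagate, the normalized quantities $W_n/\Gamma(\frac{n+1}{2})$ do not satisfy anything like a geometric-series or standard Gronwall contraction: ``dividing by $\Gamma(\frac{n+1}2)$ and summing the resulting geometric series'' does not close the induction. The missing idea --- the one the paper supplies --- is to majorize the sequence $(W_m)$ of Nagumo norms by the Taylor coefficients of the formal solution of the nonlinear Fuchsian ODE \eqref{eqM}, whose coefficients $A(t)$, $B(t)$ are only Gevrey of order $1$, and then invoke the Maillet--Malgrange-type result (Remark~\ref{remark:proofno}, Proposition~\ref{proposition:MM}) asserting that a Fuchsian equation with Gevrey-$1$ coefficients has Gevrey-$1$ formal solutions. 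That theorem (or an equivalent Gamma/Beta-convolution estimate carried out by hand at the Gevrey-$\frac12$ scale) is what actually delivers $W_n\le CA^n\Gamma(\frac{n+1}{2})$ and hence relation \eqref{equation:proofnoBT}; without it, your final summation step is unsupported.
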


However, by using transformation such as $w(t,x)=u(tx,x)$, the condition $(F)$ would be always satisfied for every equation \eqref{equation:1}, provided the initial equation admits a formal solution, e.g. if $b(0)\notin\NN^*$. Applying Theorem~\ref{theorem:yes} to this new equation yields the following result.

\begin{theorem}\label{theorem:all}
For any equation of the form \eqref{equation:1}, if $b(0)\notin\NN^*$ and $\val(\gamma)=k+1$, then for almost every sector $V$ of openness larger than but enough close to $\pi/k$, there exists $R>0$ such that \eqref{equation:1} admits an analytic solution in the associated conical domain $\{(t,x)\in\CC\times V: |t|<R|x|<R^2\}$.
\end{theorem}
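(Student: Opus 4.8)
The plan is to deduce Theorem~\ref{theorem:all} from Theorem~\ref{theorem:yes} by the change of variables $w(s,x)=u(sx,x)$, which trades the (possibly divergent) $t$-direction of the formal solution against the $x$-direction while preserving $b(0)$ and the valuation $k+1$ of $\gamma$. First I would record that, since $b(0)\notin\NN^*$, the equation \eqref{equation:1} has a unique formal solution $\hat u(t,x)=\sum_{n\ge 1}c_n(x)t^n$ with $c_n\in\CC\{x\}$ (the initial condition forces $c_0=0$). Setting $s=t/x$, so that $u(t,x)=w(t/x,x)$, one has $w(0,x)=0$ together with the chain-rule identities $s\partial_s w=(t\partial_t u)(sx,x)$ and $\partial_x u=\partial_x w-\tfrac{s}{x}\partial_s w$. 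Note that $\tfrac1x\,s\partial_s w$ is regular at $x=0$, because $\hat w=\sum_{n\ge1}c_n(x)x^n s^n$ vanishes to order $\ge n$ in $x$; this extra vanishing is the mechanism by which the transformation improves the behaviour at the origin.

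Next I would substitute these identities into \eqref{equation:1}. The linear term $\gamma(x)\partial_x u$ contributes $-\gamma(x)\tfrac{s}{x}\partial_s w=-x^k c(x)\,(s\partial_s w)$, so collecting the $s\partial_s w$ contributions on the left multiplies that quantity by the unit $1+x^k c(x)$ of $\CC\{x\}$. Since the nonlinear terms also involve $s\partial_s w$ through $\partial_x u$, the holomorphic implicit function theorem (the coefficient of $s\partial_s w$ being a unit at $x=0$) lets me solve for $s\partial_s w$ and rewrite the transformed problem once more in the shape \eqref{equation:1}, with new coefficients $\tilde a,\tilde b,\tilde\gamma,\tilde a_{i,j,\alpha}$. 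By construction $\tilde b(0)=b(0)\notin\NN^*$ and $\tilde\gamma(x)=\gamma(x)/(1+x^k c(x))$ still has valuation $k+1$, so $k$ is unchanged.

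The heart of the argument, and the step I expect to be the main obstacle, is to verify that this transformed equation actually falls within the scope of Theorem~\ref{theorem:yes}, i.e. that condition $(F)$ (or the weaker property genuinely needed for the exponential Nagumo-norm estimates) is now in force. The decisive gain is that $\hat w$ is \emph{convergent in $s$}: the factor $x^n$ in $c_n(x)x^n$ neutralizes the $t$-divergence of $\hat u$. The delicate point is the behaviour at $x=0$ of the new nonlinear coefficients $\tilde a_{i,j,\alpha}$ with $\alpha>0$, which I would isolate as a lemma computing $\tilde a_{i,j,\alpha}(0)$ explicitly: one must track the $x^i$ factors produced by $(sx)^i$, the cancellations obtained on expanding $(\partial_x w-\tfrac{s}{x}\partial_s w)^\alpha$ and re-solving for $s\partial_s w$, and the vanishing of $\tilde\gamma(x)/x=x^k c(x)/(1+x^k c(x))$ at the origin. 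The purely $\partial_x$-nonlinear terms with $i=0$ are the subtle case, where the relevant input is precisely the convergence in $s$ rather than the naive vanishing of each coefficient; establishing that this places the transformed problem under Theorem~\ref{theorem:yes} is the crux of the proof.

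Granting this, Theorem~\ref{theorem:yes} gives that $\hat w$ is convergent in $s$ and $k$-summable in $x$ in every direction outside the countable set $SD_{\tilde b,\tilde c;k}$ of the form \eqref{equation:SDbck}. For almost every direction $d\notin SD_{\tilde b,\tilde c;k}$ the series $\hat w$ admits a $k$-sum $w(s,x)$, holomorphic and bounded on a product $\{|s|<\rho\}\times V$ with $V$ a sector bisected by $d$ of opening slightly larger than $\pi/k$; these are exactly the sectors $V$ of the statement, and "almost every'' such $V$ is admissible since only countably many bisecting directions are excluded. Undoing the change of variables, $u(t,x):=w(t/x,x)$ is then holomorphic on $\{(t,x)\in\CC\times V:\ |x|<R,\ |t|<\rho|x|\}$, which after renaming the constants is the conical domain $\{(t,x)\in\CC\times V:\ |t|<R|x|<R^2\}$. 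As the substitution is invertible there, $u$ solves \eqref{equation:1}, and it is the unique solution asymptotic to $\hat u$ by the Watson-type uniqueness underlying $k$-summation, yielding the asserted analytic solution.
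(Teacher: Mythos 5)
Your overall skeleton --- the singular substitution $s=t/x$, $w(s,x)=u(sx,x)$, then a summability theorem with respect to $x$, then $k$-Borel--Laplace summation to produce an analytic solution on a conical domain --- is indeed the paper's route (Theorem~\ref{theorem:withoutFgeneral} and Corollary~\ref{rethgen}). But the step you yourself flag as ``the crux'' is a genuine gap, and the way you propose to close it cannot work. If you substitute $s=t/x$ directly into \eqref{equation:1}, every $\partial_xu$ becomes $\partial_xw-\frac1x\,s\partial_sw$, so a nonlinear term $a_{i,j,\alpha}(x)t^iu^j(\partial_xu)^\alpha$ produces monomials whose coefficients involve $a_{i,j,\alpha}(x)\,x^{i-\beta}$ for $0\le\beta\le\alpha$; whenever $\alpha>i$ (for instance the term $t(\partial_xu)^2$ of \eqref{equation:no}, or any term with $i=0$, $\alpha\ge1$) these have genuine poles at $x=0$. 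Consequently the transformed problem is \emph{not} of the shape \eqref{equation:1} with holomorphic coefficients, and your invocation of the implicit function theorem fails: the coefficient of $s\partial_sw$ is not the unit $1+x^kc(x)$, because the nonlinear terms also contribute $1/x$-singular multiples of $s\partial_sw$. Concretely, for $t\partial_tu=a(x)t+b(x)u+\gamma(x)\partial_xu+a_{0,0,2}(x)(\partial_xu)^2$ with $a_{0,0,2}(0)\neq0$, after clearing denominators one gets a quadratic equation in $Z=s\partial_sw$ whose $Z$-linear coefficient vanishes identically at $x=0$ while the $Z^2$-coefficient does not, so no holomorphic solution $Z$ exists near the origin. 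Hence there are no holomorphic functions $\tilde a_{i,j,\alpha}$ whose values at $0$ you could compute in your proposed lemma; condition $(F)$ is not even a well-posed question for the raw transformed equation.

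The missing idea in the paper is a \emph{preparation before} the singular substitution. Each coefficient $\hat u_n(x)$ of the formal solution satisfies an ODE, hence (by induction on $n$) is $k$-summable outside finitely many directions; fixing $d\notin SD_{b,c;k}$ and replacing $u$ by $u_1(x)t+u_2(x)t^2+t^2u$, where $u_1,u_2\in\GG_d^{(k)}$ are the $k$-sums, yields the form \eqref{gen1} in which every $\partial_xu$ carries a factor $t$. Only then is $\tau=t/x$ harmless, since $t\partial_xu=\tau x\partial_xw-\tau^2\partial_\tau w$ creates no pole, and the implicit function theorem legitimately solves for $\tau\partial_\tau w$ (its coefficient being $1$ plus terms vanishing at the origin), giving $\tau\partial_\tau w=F(\tau,x,w,x\partial_xw)$ with all $x$-derivatives appearing as $x\partial_xw$. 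The price of this preparation is that the coefficients of the prepared equation are only $k$-summable, not holomorphic, so the theorem actually applied is not Theorem~\ref{theorem:yes} but its extension Theorem~\ref{theorem:yeskcoeff} (through Remark~\ref{remark:conditionF2}); your proposal is missing both this preparation step and the need for the $k$-summable-coefficient version of the main theorem.
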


The result stated in Theorem~\ref{theorem:yes} is more general than that given in our previous note \cite{LCZ}  where, instead of the condition
$(F)$, the following more restrictive condition is assumed:
$$
b(0)\notin \NN^*=\{1,2,3,...\}\quad\hbox{and}\quad \val(a_{i,j,\alpha})+\nu j\ge \val(\gamma),\ \forall\ \alpha>0,\leqno(F1)
$$
where $\nu=\min(\val(a),\nu_0)$ with $\nu_0=\min\left\{\val(a_{i,0,0}):{i\ge 2}\right\}$;
see \cite{LCZ-2} for more details. In spite of the above condition $(F1)$, we are led to study a convolution PDE that can be decomposed into an infinite dimensional system of nonlinear convolution differential equations. In order to prove the existence of solutions with exponential growth at infinity, we introduce a family of Nagumo type norms to Banach spaces which were used in our previous paper  \cite{LCZ-2}.

The original definition of the $k$-summability may be found in \cite{Ra}; see also \cite{Ba}, where the $k$-summability and the multi-summability are both applied to the analytic ODEs with singularities. Even the situation seems somewhat similar as what happens for singular perturbation problems \cite{CRSS}, the principal framework in our study remains inside the $k$-summability with holomorphic parameters, such as in \cite{MR1}. A more precise version of Theorems~\ref{theorem:yes},~\ref{theorem:no} and~\ref{theorem:all} will be given as Theorems~\ref{theorem:yes1},~\ref{theorem:proofnow} and~\ref{theorem:withoutFgeneral}, respectively, and also by expression \eqref{equation:yes} and Corollary~\ref{rethgen}.

Observe Theorem~\ref{theorem:yes} can be improved to the case where coefficients $a(x)$, ..., $a_{i,j,\alpha}(x)$ of \eqref{equation:1} are only assumed to be $k$-summable in suitable directions; see Theorem~\ref{theorem:yeskcoeff}. In the semilinear case, a simple analytic change of coordinates suffices to resolve any equation by $k$-summable functions (cf. Theorem~\ref{theorem:withoutFlinear}).

\subsection{Plan of the paper}
This paper contains two parts: the part 1, from Section~\ref{section:spaces} to Section~\ref{section:ksummable}, is devoted to a reformulation of $k$-summability with holomorphic parameters by means of Nagumo norms in (generalized) Borel-plane; the part 2 is concentrated to application of results of Part 1 to the class of PDEs of the form~\eqref{equation:1}.

In Section~\ref{section:spaces}, several functional
spaces are introduced by means of a family of exponential-Nagumo
type norms; these spaces may be of interest in a general setting for
studying PDE summability. In Section~\ref{section:lemma}, the main result is Lemma~\ref{lemma:key}, which allows us to give estimates on derivatives of a function in terms of exponential-Nagumo norms; see also Corollary~\ref{cor:key}. Results of these two sections will be extended to any positive level $k>0$ in Section~\ref{section:extension}.

In Section~\ref{section:ksummable}, we will start by recalling some basic definitions or facts on $k$-summability over $\CC$ and therefore deal with a version with holomorphic parameters introduced by J. Martinet and J.-P. Ramis in \cite{MR1}. The Nagumo type norms examined in the previous sections are used and useful as test tool for studying these functions in (generalized) Borel plane.

From Section~\ref{section:conditionF}, we consider equation \eqref{equation:1} and, firstly, by assuming the condition $(F)$ we check an analytical equivalent form for that applying Borel transform gives raise to a {\it good} convolution equation. In Section~\ref{section:proofk=1}, we will  give  the
proof of Theorems~\ref{theorem:yes} for the case of $k=1$, which corresponds exactly to the Borel-summability case. A complete proof of Theorems~\ref{theorem:yes} will be given in Section~\ref{section:proofgeneralcase}.

In Section \ref{section:withoutF}, we consider more general cases in which the condition $(F)$ will be not satisfied. By using some elementary transformations on the initial variables, we study the  summability of the formal solutions in this case, particularly, it will be proved that, in this special case, the equation \eqref{equation:1} admits an analytical solution in some suitable conical domains for each time while the formal solution exists; see Theorem~\ref{theorem:withoutFgeneral} and its Corollary~\ref{rethgen}.

Finally, Theorem~\ref{theorem:no} will be proved in Section~\ref{section:proofno}, together with Theorem~\ref{theorem:proofnow}.

\subsection{Notations and related problems}\label{subsection:notations} The following notations will be used in this paper.
\begin{itemize}
\item For $R>0$ and $a\in\CC$, $D(a;R)$ denotes the open disc $\{\vert x-a\vert<R\}$ in complex plane.
 \item The symbol $\log$ will denote the principal branch of the complex logarithm given over its Riemann surface denoted by $\tilde\CC^*$.
\item The set of non-zero complex numbers can be identified as $(0,\infty)\times \SS^1$, where $\SS^1$ denotes the unit circle $\{\vert x\vert=1\}$. We will call {\it direction (over $\CC$)} any element $d\in \SS^1$, that can be represented by a real number belonging to $[0,2\pi)$.
\item If $\Omega$ denotes a domain of $\CC$ or $\CC^m$ for any positive integer $m$, $\OO(\Omega)$ will be the set of functions defined and analytic in $\Omega$.
    \item For all $k>0$, $\CC[[x]]_{1/k}$ denotes the space of power series of Gevrey order $k$: $\sum_{n\ge 0}a_nx^n\in\CC[[x]]_{1/k}$ if, and only if, $\sum_{n\ge 0}\frac{a_n}{\Gamma(1+n/k)}x^n$ admits a positive radius of convergence. When $k=\infty$, by convention $\CC[[x]]_0=\CC\{x\}$ denotes the set of germs of analytic functions at $x=0$.
\end{itemize}

It would be interesting if results of this paper might be extended and applied to classical equations mentioned in the beginning of Introduction. Also it seems that a generalization to high order equations would be possible whilst $k$-summability with holomorphic parameters would be replaced by multisummability version. In addition, analyzing Stokes phenomenon would be possible and interesting at least for some particular cases, e.g. one of the cases may be the equations of semilinear case.

Since the work \cite{LMS} on the summability of formal solutions of the heat equation, many authors have studied the (multi-)summability for PDEs, see, for example,
\cite{Ba1,CT,Hibino1,Hibino2,Hibino3,Ouchi1,Ouchi2,Ouchi3} and the references therein. Theorem~\ref{theorem:no} of this paper illustrates in what manner a combination of summations in two variables becomes necessary for some singular PDEs.  This study will be continued in a forthcoming work \cite{LZ} while the Gevrey type asymptotic analysis and summability involving two complex variables are considered.

\bigskip

\bigskip

\part[50pt]{Nagumo norms and $k$-summable functions}

A power series is said Borel-summable in a given direction $d$ if its Borel transform represents an analytic function at the origin in the Borel plane, saying $\xi=0$,  which can be analytically extended into a function possessing at most an exponential growth of the first order at the infinity over an open sector bisected by $d$. It is natural to introduce {\it exponential type} norms for functions in the $\xi$-plane.

As it is easy to be seen, any analytic partial differential equation may be, in most of cases, read as an infinte dimensional system of equations while expending along one variable. So one may be led to study a sequence of exponential norms and this is why we will consider Nagumo type norms to improve exponential norms over a sector; see  Section~\ref{section:spaces}. The classical Nagumo's norm (cf. \cite{Na}) consists of some functional norm depending on the distance to the boundary (e.g. a circle for a disc) of every point in a domain where one has to make functional estimates. See \cite[\S 3]{CRSS} and references therein for more information on Nagumo type norms and their applications.

In \S\ref{section:lemma}, Lemma~\ref{lemma:key} will be established for assuming estimates of derivatives in terms of norms of given function; it will play a key role in the proof of Theorem~\ref{theorem:yes}, done in Sections~\ref{section:proofk=1} and~\ref{section:proofgeneralcase} of Part 2.
In \S\ref{section:extension}, after considering extension to the case of a sector joined by a disc -- this is really the case for the classical definition of Borel-summability, we give also $k$-summability version of previous results.

Section~\ref{section:ksummable} is devoted to $k$-summability with holomorphic parameters, inspired by the work \cite{MR1} of J. Martinet and J.-P. Ramis. In terms of Nagumo norms, some equivalent conditions will be given, in Theorem~\ref{theorem:ksummableexpansion}, to assume holomorphic parameters $k$-summability. These creteria will be followed through all of the Part 2 for the study of summability of partial differential equations.

\section{Nagumo norms and some functional spaces}\label{section:spaces}

 Let us start by the following  Banach space $\EE_{S,\mu}$ studied  in \cite{CT} and \cite{LCZ}. For any
$d\in\SS^1$ and $\theta\in(0,\pi)$, we set
$$S(d,\theta)=\{\xi\in\CC^* : \vert\arg\xi-d\vert\xx \theta\}.
$$
 Let
$S=S(d,\theta)$ and $\mu\dd0$;
a functions $f\in\OO(S)$ belongs to $\EE_{S,\mu}$ if
$$
\Vert f\Vert_{S,\mu}:=M_0\sup_{\xi\in S}\vert
f(\xi)(1+\vert\xi\vert^2)e^{-\mu\vert\xi\vert}\vert\xx\infty,
$$
where $M_0$ is the constant given by the formula
\begin{equation}\label{equation:M_0}
M_0=\sup_{s>0}\frac{2(1+s^2)}{s(4+s^2)}\,(\ln(1+s^2)+s\arctan s).
\end{equation}
Among interesting proprieties of $\EE_{S,\mu}$, we are content to
notice that $(\EE_{S,\mu},\Vert\ \Vert_{S,\mu})$ constitutes a
Banach algebra with respect to the convolution product and,
moreover, if $\mu_2\dd\mu_1$  and
$f_i\in\EE_{S,\mu_i}$, then
\begin{equation}\label{equation:f1f2mu2}
 \Vert
f_1*f_2\Vert_{S,\mu_2}\le4[M_0(\mu_2-\mu_1)]^{-1}\Vert
f_1\Vert_{S,\mu_1}\Vert f_2\Vert_{S,\mu_2}\,.
\end{equation}
When $\mu_1=\mu_2$, the above relation \eqref{equation:f1f2mu2} can be modified as follows:
\begin{equation}\label{equation:f1f2mu}
\Vert
f_1*f_2\Vert_{S,\mu_2}
\le \Vert f_1\Vert_{S,\mu_1}\Vert f_2\Vert_{S,\mu_2}.
\end{equation}

Now we introduce some Nagumo type norms for extending these functional spaces. We will see that such norms allow to estimate the
derivatives in terms of any given function; see Section~\ref{section:lemma}, Lemma~\ref{lemma:key} and Corollary~\ref{cor:key}.

\begin{definition}\label{definition:Nagumo} Let $\theta \in(0,\pi)$, $S:=S(d,\theta)$ and let $\mu\in(0,\infty e^{-id})$, {\it i.e} $\mu e^{id}\in(0+\infty)$; for any $\xi\in S$, let
\begin{equation}\label{equation:deltaxi}
\delta(\xi)=\delta(\xi,S):=\min\{d+\theta-\arg\xi,-d+\theta+\arg\xi,1\}\,.
\end{equation}
For any $f\in\OO(S)$ and $n\ge0$, we define:
$$
\|f\|_{S,\mu ,n}:=M_0\sup_{\xi\in S}\left|f(\xi)e^{-\mu
\xi}(1+|\xi|^2)\delta(\xi)^{n} \right|,
$$
where $M_0$ is the positive constant given by \eqref{equation:M_0}.

The function $f$ will be said belonging to $\EE_{S,\mu ,n}$ if  $\|f\|_{S,\mu ,n}<\infty$.
\end{definition}

In the definition \ref{definition:Nagumo}, the parameter $n\geq 0$ can be often chosen as a non-negative integer.

\begin{remark}\label{remark:spaces1}
In Definition~\ref{definition:Nagumo}, contrary to what done in our previous paper \cite{LCZ-2}, we make use of $e^{-\mu \xi}$ instead of $e^{-\mu\vert\xi\vert}$; this modification permits much flexibility to carry arguments inside Complex Analysis. See Corollary~\ref{cor:key}, Proposition~\ref{proposition:NagumoR} and so on.
\end{remark}

We notice firstly that if $\theta<\pi/2$, $S=S(d,\theta)$ and $\mu=\vert\mu\vert e^{-id}$, then the following inclusions hold for any $n\ge 0$:
\begin{equation}\label{equation:3EE}
\EE_{S,\vert\mu\vert\cos\theta}\subset\EE_{S,\mu,0}\subset\EE_{S,\mu,n}.
\end{equation}
Indeed, in view of the fact that $\delta(\xi,S)\le 1$ and
$$\vert\mu\vert\vert\xi\vert\cos\theta<\Re(\mu\xi)\le \vert\mu\vert\vert\xi\vert,\qquad
\forall\ \xi\in S(d, \theta),
$$
it follows that, for any given $f\in\OO(S)$:
\begin{equation}\label{equation:3norms}
\Vert f\Vert_{S,\mu,n}\le \Vert f\Vert_{S,\mu,0}\le \Vert f\Vert_{S,\vert\mu\vert\cos\theta}\,.
\end{equation}

One can easily prove that each $(\EE_{S,\mu,n},\|\cdot\|_{S,\mu,n})$ constitutes a Banach
space. Let $\mu$, $\mu^\prime\in(0,\infty e^{-id})$ with 
$|\mu|\ge |\mu^\prime|$
 and let $n\ge n^\prime\ge0 $. Observe, as in \eqref{equation:3EE} and \eqref{equation:3norms}, the Banach space $\EE_{S,\mu^\prime ,n^\prime}$ can be considered as a
subspace of $\EE_{S,\mu,n}$ and  the following inequality holds:
\begin{equation}\label{equation:2norms}
\forall \ f\in
\EE_{S,\mu^\prime,n^\prime},\qquad
\|f\|_{S,\mu ,n}\le\|f\|_{S,\mu ^\prime,n^\prime}.
\end{equation}

With regard to the estimates of \eqref{equation:f1f2mu2} and \eqref{equation:f1f2mu} for the convolution product, one has following result.

\begin{proposition}\label{proposition:convolution}
Let $S=S(d,\theta)$ and $\mu$ as in Definition~\ref{definition:Nagumo} and let $n$, $n'\ge 0$. The following assertions hold.
\begin{enumerate}
\item \label{assertion:fgmunn'}If $f\in\EE_{S, \mu ,n}$ and $g\in\EE_{S, \mu ,n^\prime}$, then
 $f*g\in\EE_{S, \mu ,n+n^\prime}$
 and
\begin{equation}\label{equation:fgmunn'}
\|f*g\|_{S, \mu ,n+n^\prime}\le \|f\|_{S, \mu ,n}\|g\|_{S, \mu ,n^\prime}
\end{equation}
\item \label{assertion:fgmumu'} Let $\mu^\prime\in(0,\infty e^{-id})$ such that
$|\mu|\le |\mu^\prime|$. If $f\in\EE_{S, \mu ,0}$, $g\in\EE_{S, \mu
^\prime,n}$, then $f*g\in \EE_{S, \mu ^\prime,n}$ and
\begin{equation}\label{equation:fgmumu'}
\|f*g\|_{S, \mu ^\prime,n}\le C_{\mu'-\mu}\,\Vert f\Vert_{S,\mu,0}\,\Vert g\Vert_{S,\mu',n},
\end{equation}
where, $M_0$ being defined by \eqref{equation:M_0}, we set:
$$
C_{\mu'-\mu}=
\frac{4}{M_0\cos(\theta/2)\,|\mu
^\prime-\mu |}.$$
\end{enumerate}
\end{proposition}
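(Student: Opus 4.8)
The plan is to reduce both inequalities, after parametrizing the convolution along the straight segment $[0,\xi]$ (writing $s=t\xi$, $t\in[0,1]$, so $ds=\xi\,dt$) and inserting the pointwise bounds furnished by the defining suprema of the norms, to a scalar integral inequality in the single variable $r=|\xi|$. The geometric fact driving everything is that $\delta(\cdot,S)$ depends only on the argument: every interior point of $[0,\xi]$ has argument $\arg\xi$, so $\delta(s,S)=\delta(\xi-s,S)=\delta(\xi,S)$ all along the segment.

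For assertion~\ref{assertion:fgmunn'} I would take the straight segment and use $|f(\xi-s)|\le \|f\|_{S,\mu,n}\,|e^{\mu(\xi-s)}|/\big(M_0(1+|\xi-s|^2)\delta(\xi-s)^n\big)$ together with the analogous bound for $g$. Since $\mu$ is common, the two exponentials recombine exactly, $|e^{\mu(\xi-s)}|\,|e^{\mu s}|=|e^{\mu\xi}|$, while the two $\delta$-powers multiply to $\delta(\xi)^{n+n'}$ by the remark above. Hence, after multiplying by $M_0\,|e^{-\mu\xi}|(1+|\xi|^2)\delta(\xi)^{n+n'}$, assertion~\ref{assertion:fgmunn'} follows once I verify the scalar inequality
\[
(1+r^2)\,r\int_0^1\frac{dt}{(1+(1-t)^2r^2)(1+t^2r^2)}\le M_0,\qquad r>0.
\]
Substituting $u=tr$ and exploiting the symmetry $u\leftrightarrow r-u$, a partial-fraction decomposition of $\big[(1+u^2)(1+(r-u)^2)\big]^{-1}$ evaluates the left-hand side to exactly $\dfrac{2(1+r^2)}{r(4+r^2)}\big(\ln(1+r^2)+r\arctan r\big)$, whose supremum over $r>0$ is $M_0$ by the very definition \eqref{equation:M_0}. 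Thus the constant in \eqref{equation:fgmunn'} is exactly $1$; this is precisely what the normalization factor $M_0$ in Definition~\ref{definition:Nagumo} is designed to achieve.

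Assertion~\ref{assertion:fgmumu'} is the directional analogue of \eqref{equation:f1f2mu2}. Writing $\mu''=\mu'-\mu$ (so $\arg\mu''=-d$ and $|\mu''|=|\mu'|-|\mu|$), the same recombination now leaves a residual factor: relative to the target weight $e^{-\mu'\xi}$ one is left, on the segment, with $|e^{-\mu''(\xi-s)}|=\exp\!\big(-(1-t)\,|\mu''|\,|\xi|\cos(\arg\xi-d)\big)$. Since only $|\arg\xi-d|<\theta$ is known, the straight segment furnishes decay merely at rate $|\mu''|\cos\theta$, which is useless once $\theta\ge\pi/2$. The remedy is to use that $f$ and $g$ are analytic on all of $S$ and to deform the path from $0$ to $\xi$ to one, $\Gamma_\xi\subset S$ with $\xi-\Gamma_\xi\subset S$, along which $|e^{-\mu''\sigma}|$ decays at the improved rate $|\mu''|\cos(\theta/2)>0$. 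Granting such a path, and using $\delta(\xi)/\delta(s)\le 1$ along it, the problem reduces to $(1+r^2)\int_{\Gamma_\xi}\frac{|e^{-\mu''(\xi-s)}|}{(1+|s|^2)(1+|\xi-s|^2)}\,|ds|\le \frac{4}{\cos(\theta/2)\,|\mu''|}$; splitting this integral at the midpoint and bounding the appropriate polynomial factor by $4/(1+r^2)$ on each half — exactly as in the proof of \eqref{equation:f1f2mu2} — yields the stated constant $C_{\mu'-\mu}=4\big/\big(M_0\cos(\theta/2)|\mu''|\big)$.

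The hard part is the construction of $\Gamma_\xi$, which must simultaneously (i) realize the decay rate $\cos(\theta/2)$ — achieved by letting the path leave the origin, and approach $\xi$, in directions lying within angle $\theta/2$ of the central direction $d$, where $\cos(\arg\sigma-d)\ge\cos(\theta/2)$ — and (ii) keep the quotient $\delta(\xi)/\delta(s)$ (respectively $\delta(\xi)/\delta(\xi-s)$, according to which factor carries the index $n$) bounded by $1$, which holds if $\Gamma_\xi$ stays on the central side of $\xi$, i.e. within $\{\,|\arg\sigma-d|\le|\arg\xi-d|\,\}$, since $\delta(\cdot,S)$ increases toward the axis. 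The tension between (i)--(ii) and the constraint $\xi-\Gamma_\xi\subset S$ is the only genuine difficulty; once a path meeting all three demands is exhibited, the localization of the integral near the endpoints $\sigma=0$ and $\sigma=\xi$ (where one polynomial factor is $O(1)$ and the other $O(1/r^2)$) makes the remaining estimate routine.
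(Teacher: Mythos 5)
Your treatment of assertion~\eqref{assertion:fgmunn'} is correct and is precisely the paper's proof: the segment parametrization, the identity $\delta(\tau)=\delta(\xi-\tau)=\delta(\xi)$ along rays, and the exact evaluation of $I(r)=\int_0^r\frac{dt}{(1+t^2)(1+(r-t)^2)}$ against the definition \eqref{equation:M_0} of $M_0$.

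Assertion~\eqref{assertion:fgmumu'} is where your proposal has a genuine gap, and it is not a fillable one. The path $\Gamma_\xi$ is never constructed, and it cannot be: your demand (i), whether read as constraining the positions on $\Gamma_\xi$ or its directions of travel, forces the endpoint $\xi=\int_{\Gamma_\xi}d\sigma$ into the closed convex cone $\{\,|\arg\sigma-d|\le\theta/2\,\}$; but the estimate must hold for all $\xi\in S(d,\theta)$, and the outer region $\theta/2<|\arg\xi-d|<\theta$ is nonempty for \emph{every} $\theta$. So the deformation strategy fails as formulated for every opening, not only for $\theta\ge\pi/2$. Worse, for $\theta>\pi/2$ no argument can succeed, because contour deformation does not change the value of $f*g$ (all admissible paths give the same function), and the assertion is then simply false. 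Take $d=0$, $0<\mu<\mu'$ real, $f(\tau)=e^{\mu\tau}(1+\tau)^{-2}$, $g(\tau)=e^{\mu'\tau}(1+\tau)^{-2}$; these lie in $\EE_{S,\mu,0}$ and $\EE_{S,\mu',n}$ respectively, since $|1+\tau|^2\ge(1-|\cos\theta|)(1+|\tau|^2)$ on $S$ when $\theta<\pi$. The model identity
\begin{equation*}
\bigl(e^{\mu\,\cdot}*e^{\mu'\,\cdot}\bigr)(\xi)=\frac{e^{\mu'\xi}-e^{\mu\xi}}{\mu'-\mu}
\end{equation*}
exhibits the mechanism: the convolution inherits the term $e^{\mu\xi}$, and on any ray $\arg\xi=\beta$ with $\cos\beta<0$ (such rays lie in $S$ once $\theta>\pi/2$) one has $|e^{\mu\xi}|=e^{\mu|\xi|\cos\beta}\gg|e^{\mu'\xi}|$ because $\mu<\mu'$. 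A dominated-convergence computation along such a ray shows the damped pair behaves the same way, $f*g(\xi)\sim C\,e^{\mu\xi}(1+\xi)^{-2}$ with $C\ne0$ for all but countably many values of $\mu'-\mu$; hence $(1+|\xi|^2)\delta(\xi)^n\,|e^{-\mu'\xi}(f*g)(\xi)|\to\infty$ and $f*g\notin\EE_{S,\mu',n}$. (For the same reason the monotonicity \eqref{equation:2norms} also tacitly requires $\theta\le\pi/2$.)

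What does work --- and what the paper's own proof does --- is exactly the straight segment you discarded, read under the restriction $\theta<\pi/2$ that is in force everywhere the proposition is applied (cf. \eqref{equation:3EE}, and Lemma~\ref{lemma:leg2}, where $\theta\in(0,\pi/2)$). Along $[0,\xi]$ the residual factor in \eqref{equation:fgtau1} is $e^{-|\mu'-\mu|\,t\cos(\arg\xi-d)}$ with $\cos(\arg\xi-d)\ge\cos\theta>0$, and your own midpoint splitting then yields \eqref{equation:fgmumu'} with $\cos\theta$ in place of $\cos(\theta/2)$. The constant printed in the statement, with $\cos(\theta/2)$, follows neither from that argument nor from your sketch; but the discrepancy is harmless downstream, since every later use only needs a constant of the form $C/|\mu'-\mu|$, and Lemma~\ref{lemma:leg2} in fact calibrates $|\mu|$ with $\cos\theta$, which is exactly what the straight-segment constant requires. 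The correct repair of your part~\eqref{assertion:fgmumu'} is therefore not a cleverer contour, but the hypothesis $\theta<\pi/2$ together with the straight-segment estimate.
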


\begin{proof}
Let $f\in\EE_{S, \mu ,n}$, $g\in\EE_{S, \mu ,n^\prime}$ and let $\xi\in S$. For any $\tau\in(0,\xi)$, it follows that $\delta(\tau)=\delta(\xi-\tau)=\delta(\xi)$; hence,  the following inequality holds for all $\tau\in(0,\xi)$:
\begin{equation}\label{equation:fgtau}
\vert f(\tau)\,g(\xi-\tau)\vert\le C_{f,g}\,\frac{\vert e^{\mu\xi}\vert\,\delta(\xi)^{-n-n'}}{(1+\vert \xi-\tau\vert^2)(1+\vert\tau\vert^2)}\,,
\end{equation}
where we set
$$
C_{f,g}:=\frac{\Vert f\Vert_{S,\mu,n}\,\Vert g\Vert_{S,\mu,n'}}{M_0\,^2}\,.
$$

By expressing $f*g(\xi)$ as integral of $\tau\mapsto f(\tau)\,g(\xi-\tau)$ over interval $(0,\xi)$ and by considering \eqref{equation:fgtau} in this expression, one can deduce that
$$
|f*g(\xi)|
\le C_{f,g}\,\delta(\xi)^{-n-n^\prime}\,\vert e^{\mu\xi}\vert\Big\vert\int^{\xi}_0
\frac{d\tau}
{(1+|\xi-\tau\vert^2)(1+\vert\tau\vert^2)}\Big\vert\,.
$$
If we define
$$
I(s)=\int_0^s\frac{dt}{(1+(s-t)^2)(1+t^2)}\quad\forall s>0,
$$
then we get the following estimate:
$$
|f*g(\xi)|
\le C_{f,g}\,\delta(\xi)^{-n-n^\prime}\,\vert e^{\mu\xi}\vert\,I(\vert\xi\vert).
$$
Since
$$
I(s)=\frac{2}{s(4+s^2)}\,(s\arctan s+\ln(1+s^2))\le \frac{M_0}{1+s^2},
$$
we obtain the estimate \eqref{equation:fgmunn'}, which implies that $f*g\in\EE_{S,\mu,n+n'}$, the first part of Proposition~\ref{proposition:convolution} is proved.

Next, let $f\in\EE_{S, \mu ,0}$, $g\in\EE_{S, \mu^\prime,n}$, instead of \eqref{equation:fgtau}, we have 
\begin{equation}\label{equation:fgtau1}
\vert f(\tau)\,g(\xi-\tau)\vert\le C'_{f,g}\,\frac{\vert e^{\mu'\xi-(\mu'-\mu)\tau}\vert\,\delta(\xi)^{-n}}{(1+\vert \xi-\tau\vert^2)(1+\vert\tau\vert^2)}\,,
\end{equation}
where $C'_{f,g}$ is a similar constant as $C_{f,g}$, thus by similar way, we can prove the estimate \eqref{equation:fgmumu'} holds, the second part of  Proposition~\ref{proposition:convolution} is proved.
\end{proof}

If we take $n=n'=0$ in \eqref{equation:fgmunn'}, we find following corollary.
\begin{cor}\label{cor:nn'0}
The Banach space $\EE_{S,\mu,0}$ constitutes a Banach algebra w.r.t. the convolution product.
\end{cor}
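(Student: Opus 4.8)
The plan is to verify the three defining ingredients of a convolution Banach algebra one by one, reducing everything to facts already established. First I would recall that $(\EE_{S,\mu,0},\|\cdot\|_{S,\mu,0})$ has already been shown to be a Banach space, as noted immediately after \eqref{equation:2norms}; hence only the multiplicative structure remains to be addressed. I would then observe that on functions analytic on the sector $S$ the convolution product is given by $f*g(\xi)=\int_0^\xi f(\tau)\,g(\xi-\tau)\,d\tau$, the integral being taken along the segment $(0,\xi)$, and that this product is bilinear, commutative and associative by the standard change-of-variable arguments. These algebraic properties require no norm estimates, so the only analytic content is the closure of $\EE_{S,\mu,0}$ under $*$ together with submultiplicativity of the norm. (No unit is claimed, the algebra being understood in the non-unital sense.)

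The key step, and in fact the only analytic input, is to specialize the first assertion of Proposition~\ref{proposition:convolution} to the case $n=n'=0$. Since then $n+n'=0$ as well, the estimate \eqref{equation:fgmunn'} reads $\|f*g\|_{S,\mu,0}\le\|f\|_{S,\mu,0}\,\|g\|_{S,\mu,0}$ for all $f,g\in\EE_{S,\mu,0}$. This single inequality simultaneously shows that $f*g$ belongs to $\EE_{S,\mu,0}$ and that the norm is submultiplicative, which is precisely what the Banach-algebra axioms demand in addition to the already-known Banach-space and algebra structures.

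There is essentially no obstacle remaining: the substantive work was already carried out inside the proof of Proposition~\ref{proposition:convolution}, where the bound $I(s)\le M_0/(1+s^2)$ on the iterated integral is exactly what supplies the constant $1$ in front of the product of norms, thanks to the choice of the normalizing factor $M_0$ from \eqref{equation:M_0}. Combining the Banach-space structure, the algebraic properties of $*$, and this submultiplicative estimate then yields that $(\EE_{S,\mu,0},*)$ is a Banach algebra, completing the proof.
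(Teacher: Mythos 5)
Your proof is correct and follows exactly the paper's route: the paper introduces the corollary with the remark ``If we take $n=n'=0$ in \eqref{equation:fgmunn'}, we find following corollary'' and then simply declares the proof clear, which is precisely your specialization of Proposition~\ref{proposition:convolution} to $n=n'=0$. Your additional verification of the Banach-space structure and the bilinearity, commutativity and associativity of the convolution product just makes explicit what the paper leaves implicit.
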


\begin{proof}
 It is clear.
\end{proof}

On the other hand, from Proposition~\ref{proposition:convolution}, one can not know whether the space $(\EE_{S,\mu,n},\|\cdot\|_{S,\mu,n})$ does constitute a Banach algebra w.r.t. the convolution product when $n\ge 1$.

\section{A key lemma}\label{section:lemma}

In this section, the main result is Lemma~\ref{lemma:key}, in which we will give an estimate of the first order derivative of a function in functional spaces introduced in Section~\ref{section:spaces}. Let $SD_{b,c;k}$ be the set  given by \eqref{equation:SDbck}. It is easy to check that, for any direction $d$ which does not belong to
$SD_{b,c;k}$, there exist positive constants $\theta$ and $\sigma$ such that
for any $n\in\NN^*$ and $\xi\in S(d,\theta)$, the following estimate holds:
\begin{equation}\label{equation:nbcsigma}
|n-b-c\xi^k|\ge \sigma(n+|\xi^k|),
\end{equation}
where $b=b(0)$ and $c=c(0).$

\begin{lemma}[Key Lemma]\label{lemma:key} Let $\theta \in(0,\pi)$, $S:=S(d,\theta)$ and $n$ be a positive integer. If for $k=1$ and $\sigma>0$ the 
inequality (\ref{equation:nbcsigma}) holds and $(n-b-c\xi)f\in\EE_{S, \mu ,n-1}$,
then $\xi\partial_\xi f\in\EE_{S, \mu ,n}$ and
\begin{equation}\label{equation:key}\|\xi\partial_\xi
f\|_{S, \mu ,n}\le E \|(n-b-c\xi)f\|_{S, \mu ,n-1},
\end{equation} where
$E=\sigma^{-1}(e^3+|\mu |)$ and $\sigma$ is a positive constant satisfying (\ref{equation:nbcsigma}) in the case of $k=1$.
\end{lemma}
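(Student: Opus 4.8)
The plan is to reduce the whole estimate to a single Cauchy inequality, after absorbing both the exponential weight and the vector field $\xi\partial_\xi$ into convenient holomorphic objects. Set $g:=(n-b-c\xi)f\in\EE_{S,\mu,n-1}$ and introduce $\phi(\xi):=f(\xi)e^{-\mu\xi}$, which is holomorphic on $S$. Working with $\phi$ rather than $f$ is exactly the flexibility advertised in Remark~\ref{remark:spaces1}: since $e^{-\mu\xi}$ is holomorphic, the exponential weight is never separated from $f$ when Cauchy's formula is applied, so it enters only through the quantity $|f(\xi)e^{-\mu\xi}|$, which the hypothesis together with \eqref{equation:nbcsigma} bounds directly:
\[
|f(\xi)e^{-\mu\xi}|\le\frac{\|g\|_{S,\mu,n-1}}{\sigma\,(n+|\xi|)\,M_0\,(1+|\xi|^2)\,\delta(\xi)^{n-1}},\qquad\xi\in S.
\]

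Next I would pass to the logarithmic coordinate $z=\log\xi$, under which $S$ becomes the strip $\{|\Im z-d|<\theta\}$, the operator $\xi\partial_\xi$ becomes $\partial_z$, and $\delta$ measures (a capped) distance of $\Im z$ to the strip boundary. Writing $\Phi(z):=\phi(e^z)$, a direct computation gives $\partial_z\Phi(z)=\xi f'(\xi)e^{-\mu\xi}-\mu\xi f(\xi)e^{-\mu\xi}$, hence
\[
|\xi f'(\xi)e^{-\mu\xi}|\le|\partial_z\Phi(z)|+|\mu|\,|\xi|\,|f(\xi)e^{-\mu\xi}|.
\]
The second term is harmless: the displayed bound, together with $|\xi|/(n+|\xi|)\le1$ and $\delta(\xi)\le1$, shows its contribution to $\|\xi\partial_\xi f\|_{S,\mu,n}$ is at most $\sigma^{-1}|\mu|\,\|g\|_{S,\mu,n-1}$, which is the $|\mu|$ part of $E$. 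For the first term I would apply the Cauchy estimate $|\partial_z\Phi(z)|\le\rho^{-1}\sup_{|\zeta-z|=\rho}|\Phi(\zeta)|$ on the scale-adapted disc of radius $\rho=\delta(\xi)/n$; this disc stays in the strip because $\rho\le\delta(\xi)$ does not exceed the distance of $\Im z$ to the boundary. On the image circle the point $\xi_\zeta=e^\zeta$ satisfies $|\xi_\zeta|\ge|\xi|e^{-\rho}$ and $|\arg\xi_\zeta-\arg\xi|\le\rho$, so that $\delta(\xi_\zeta)\ge\delta(\xi)-\rho=\delta(\xi)(n-1)/n$ and $(1+|\xi|^2)/(1+|\xi_\zeta|^2)\le e^{2\rho}$.

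Assembling these, the Cauchy bound combined with the displayed estimate for $|f(\xi_\zeta)e^{-\mu\xi_\zeta}|$ and weighted by $M_0(1+|\xi|^2)\delta(\xi)^n$ produces a product of four elementary factors, $\rho^{-1}$ from Cauchy, $(n+|\xi_\zeta|)^{-1}\le n^{-1}$ from the coefficient $(n-b-c\xi)$, $(1+|\xi|^2)/(1+|\xi_\zeta|^2)\le e^{2}$ from the quadratic weight, and $\delta(\xi)^n/\delta(\xi_\zeta)^{n-1}\le e\,\delta(\xi)$ from the Nagumo power (using $(n/(n-1))^{n-1}\le e$); their product equals $e^{3}\,\delta(\xi)/(n\rho)=e^{3}$ by the choice $\rho=\delta(\xi)/n$. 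Thus the first term contributes at most $\sigma^{-1}e^{3}\|g\|_{S,\mu,n-1}$, and adding the two terms yields \eqref{equation:key} with $E=\sigma^{-1}(e^{3}+|\mu|)$.

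The step I expect to be most delicate is the uniform-in-$n$ control of the Nagumo power: a priori, replacing $\delta(\xi)^{n}$ in the target norm by $\delta(\xi_\zeta)^{n-1}$ on a nearby circle loses a factor growing with $n$, and it is balanced only because the radius is taken proportional to $\delta(\xi)/n$ while the coefficient $(n-b-c\xi)$ supplies, through \eqref{equation:nbcsigma}, the compensating factor $(n+|\xi|)^{-1}$ that cancels the $\rho^{-1}\sim n/\delta(\xi)$ coming from Cauchy's inequality; this is precisely what makes $E$ independent of $n$. A minor point to address is that for $n=1$ with the cap inactive the disc of radius $\rho=\delta(\xi)$ touches the strip boundary; since the power $\delta(\xi_\zeta)^{n-1}=1$ is then irrelevant, one simply works with any radius slightly smaller than $\delta(\xi)$ and lets it tend to $\delta(\xi)$.
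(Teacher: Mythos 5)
Your proof is correct and follows essentially the same route as the paper: the paper likewise sets $h(\xi)=f(\xi)e^{-\mu\xi}$, splits $\xi\partial_\xi h=e^{-\mu\xi}\xi\partial_\xi f-\mu\xi f e^{-\mu\xi}$, estimates the first piece by a Cauchy inequality in the logarithmic coordinate with radius $\delta(\xi)/n$ (its Propositions~\ref{proposition:Dff'} and~\ref{proposition:Shh'}, yielding the same factor $e^3$ as your $e^2\cdot e$), and cancels the resulting factor $n$ and the term $|\mu\xi|$ via the lower bound \eqref{equation:nbcsigma}, arriving at the same constant $E=\sigma^{-1}(e^3+|\mu|)$. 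The only cosmetic difference is that you apply the hypothesis pointwise on the Cauchy circle instead of first invoking the sector-wide sup estimate \eqref{equation:hn1}, and you handle $n=1$ by a limiting radius where the paper uses $r=\delta(\xi)/e$.
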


The proof of Lemma~\ref{lemma:key} will be given later in this section, which will depend on following  two propositions.

\subsection{Nagumo norms inside Cauchy formula}\label{subsection:lemmaCauchy}  Notice that the function $\xi\mapsto\delta(\xi)$ given by \eqref{equation:deltaxi} depends on the angular distance of $\xi$ to the half-lines sides of the sector $S$. If we set $\eta=\log\xi$, this means that $\xi=e^\eta$, then the sector $S=S(d,\theta)$ will be transformed into a horizontal strip $\Omega:=\Omega(d,\theta)$, which can be identified to the unbounded rectangular domain $\RR\times (d-\theta,d+\theta)i$. Let $d(\zeta)=\delta(e^\zeta)$ for any $\zeta\in \Omega$, and let $d(\xi,\partial \Omega)$ be the distance from $\zeta$ to the boundary of $\Omega$. It follows:
\begin{equation}\label{equation:dboundary}
d(\zeta)\le \min\{1,d(\zeta,\partial \Omega)\},\qquad
d(\zeta+\eta)\ge d(\zeta)-\vert \eta\vert
\end{equation}
for any $(\zeta,\eta)\in \Omega\times \Omega$ such that $\zeta+\eta\in \Omega$.

\begin{proposition}\label{proposition:Dff'}
Let $D$ be a simply connected region of the complex plane and let $d(\zeta)$ be a positive function defined in $D$ satisfying the condition \eqref{equation:dboundary} where $\Omega$ is replaced by $D$.
Let  $f\in\OO(D)$.
If there exist $k>0$, $n\ge0$ and $C>0$ such that, for any $\zeta\in D$,
\begin{equation}\label{equation:fCkn}
|f(\zeta)|\le\frac C{(1+|e^\zeta|^{2k})d(\zeta)^n},
\end{equation}
then the following estimate holds over the whole domain $D$:
\begin{equation}\label{estd}
\left|f^\prime(\zeta) \right|\le
\frac{e^{2k+1}(n+1)C}{(1+|e^\zeta|^{2k})d(\zeta)^{n+1}}.
\end{equation}
\end{proposition}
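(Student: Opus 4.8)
The plan is to use the Cauchy integral formula to bound $f'(\zeta)$ by integrating $f$ over a small circle centered at $\zeta$, choosing the radius adapted to the geometry encoded by $d(\zeta)$. Concretely, for a fixed $\zeta\in D$ I would pick a radius $r=r(\zeta)\in(0,1]$ proportional to $d(\zeta)$ — say $r=d(\zeta)/(n+1)$ or some comparable choice — so that the closed disc $\overline{D(\zeta;r)}$ stays inside $D$ (this is legitimate because \eqref{equation:dboundary} gives $d(\zeta)\le d(\zeta,\partial D)$, so a disc of radius at most $d(\zeta)$ about $\zeta$ lies in $D$). Then
\begin{equation}\label{equation:cauchyplan}
f'(\zeta)=\frac{1}{2\pi i}\oint_{|\eta|=r}\frac{f(\zeta+\eta)}{\eta^2}\,d\eta,
\qquad
|f'(\zeta)|\le\frac1r\sup_{|\eta|=r}|f(\zeta+\eta)|.
\end{equation}

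Next I would insert the hypothesis \eqref{equation:fCkn} applied at the point $\zeta+\eta$ with $|\eta|=r$, which requires controlling the two factors $(1+|e^{\zeta+\eta}|^{2k})$ and $d(\zeta+\eta)^n$ from below in terms of their values at $\zeta$. For the exponential factor I would use $|e^{\zeta+\eta}|=|e^\zeta|\,e^{\Re\eta}$ together with $|\Re\eta|\le r\le 1$, giving $e^{-1}\le e^{\Re\eta}\le e$; this yields a comparison of $1+|e^{\zeta+\eta}|^{2k}$ with $1+|e^\zeta|^{2k}$ up to a factor like $e^{2k}$ (keeping careful track of whether the factor helps or hurts, since it appears in a denominator). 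For the $d$-factor I would invoke the second inequality of \eqref{equation:dboundary}, namely $d(\zeta+\eta)\ge d(\zeta)-|\eta|\ge d(\zeta)-r$, and with the choice $r=d(\zeta)/(n+1)$ one gets $d(\zeta+\eta)\ge d(\zeta)\bigl(1-\tfrac1{n+1}\bigr)=d(\zeta)\tfrac{n}{n+1}$, so $d(\zeta+\eta)^{-n}\le d(\zeta)^{-n}\bigl(\tfrac{n+1}{n}\bigr)^n$.

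Assembling these pieces, the bound \eqref{equation:cauchyplan} becomes
\begin{equation}\label{equation:assembleplan}
|f'(\zeta)|\le\frac1r\cdot\frac{C\,e^{2k}\,\bigl(\tfrac{n+1}{n}\bigr)^{n}}{(1+|e^\zeta|^{2k})\,d(\zeta)^{n}}
=\frac{C\,e^{2k}(n+1)\bigl(\tfrac{n+1}{n}\bigr)^{n}}{(1+|e^\zeta|^{2k})\,d(\zeta)^{n+1}},
\end{equation}
and the proof is completed by absorbing the numerical factor $\bigl(1+\tfrac1n\bigr)^n<e$ into the exponential, which turns $e^{2k}\cdot e$ into $e^{2k+1}$ and matches the stated constant $e^{2k+1}(n+1)$ in \eqref{estd}. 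I expect the main obstacle to be the bookkeeping in this last step: one must verify that the chosen radius really keeps the disc inside $D$ for all $\zeta$ (the truncation by $1$ in the definition of $d$ is exactly what guarantees $r\le1$, which is what makes the $e^{\pm1}$ bounds on $e^{\Re\eta}$ valid), and one must handle the degenerate case $n=0$ separately since $\bigl(\tfrac{n+1}{n}\bigr)^n$ is then vacuous — there the factor is simply $1$ and the argument is cleaner. A secondary subtlety is confirming the direction of the inequalities for the $1+|e^\zeta|^{2k}$ term so that the comparison lands on the favorable side; choosing the radius slightly smaller if necessary gives room to absorb constants cleanly.
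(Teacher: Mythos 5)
Your proposal is correct and follows essentially the same route as the paper's own proof: the Cauchy formula on a circle of radius $r=d(\zeta)/(n+1)$ (with $n=0$ treated separately), the lower bound $1+|e^{\zeta+\eta}|^{2k}\ge e^{-2k}(1+|e^{\zeta}|^{2k})$ coming from $r\le 1$, the inequality $d(\zeta+\eta)\ge d(\zeta)-r$ from \eqref{equation:dboundary}, and the absorption of $\bigl(\tfrac{n+1}{n}\bigr)^n<e$ into the constant $e^{2k+1}$. Your closing cautions (keeping $r$ strictly below $d(\zeta)$ so the circle stays in $D$, and checking the direction of the exponential comparison) are exactly the points the paper handles, so nothing is missing.
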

\begin{proof} Let $\zeta\in D$ and choose a positive $r$ such that $r<d(\zeta)$. Let $C_{\zeta,r}$ be the positively oriented circle centered at $\zeta$ with radius $r$. By using Cauchy formula, it follows:
$$
f'(\zeta)=\frac1{2\pi i}\,\int_{C_{\zeta,r}}\frac
{f(y)}
{(y-\zeta)^2}\,dy.$$
Replacing $\zeta$ by $\zeta+re^{i\alpha}$ in \eqref{equation:fCkn}, one has
$$
|f'(\zeta)|\le \frac 1{2\pi r}\int_0^{2\pi}\frac {C\,d\alpha}
{(1+|e^{\zeta+re^{\alpha i}}|^{2k})[d(\zeta+re^{\alpha i})]^n },
$$
which implies that
\begin{equation}
\vert f'(\zeta)|\le \frac {e^{2k}C} {1+|e^\zeta|^{2k}}\,\frac 1{r
[d(\zeta)-r]^n},\label{equation:f'r}
\end{equation}
in view of \eqref{equation:dboundary} and of the fact that $r<d(\zeta)\le1$.

If $n=0$, from \eqref{equation:f'r} we get the required estimate \eqref{estd}  by choosing $r=\frac{ d(\zeta)}e$. If $n\ge 1$, we choose $r=\frac{d(\zeta)}{n+1}$, which implies the estimate \eqref{estd} from \eqref{equation:f'r}; indeed, we have following obvious estimate:
$$
\frac{1}{r[d(\zeta)-r]^n}= \frac{n+1}{d(\zeta)}
\left(\frac{1}{d(\zeta)}\frac{1+n}{n}\right)^n =
\frac{n+1}{d(\zeta)^{n+1}}\Big(\frac{n+1}n\Big)^n\le
 \frac{e(n+1)}{d(\zeta)^{n+1}}.
$$
Proposition~\ref{proposition:Dff'} is then proved.
\end{proof}

The following result can be proved as a direct application of Proposition~\ref{proposition:Dff'} with $D=\Omega=\log S(d,\theta)$ and $d(\zeta)=\delta(e^\zeta)$.
\begin{proposition}\label{proposition:Shh'}
Let  $h\in\OO(S)$ with $S=S(d,\theta)$. Let $\delta(\xi)$ be as in \eqref{equation:deltaxi}. If there exist constants $k>0$, $n\ge 0$ and $C>0$ such that
$$
|h(\xi)(1+|\xi|^{2k})\delta(\xi)^n|\le C
$$
for all $\xi\in S$,
then the following estimate holds over $S$:
\begin{equation}\label{equation:hnkC}
 |(1+|\xi|^{2k})\delta(\xi)^{n+1}\xi \partial_\xi h(\xi)|\le (n+1) e^{2k+1}
C.
\end{equation}
\end{proposition}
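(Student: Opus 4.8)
The plan is to reduce everything to Proposition~\ref{proposition:Dff'} through the logarithmic change of variables $\zeta=\log\xi$, exactly as suggested in the sentence preceding the statement. First I would introduce the branch of the logarithm that maps the sector $S=S(d,\theta)$ biholomorphically onto the horizontal strip $\Omega=\log S=\RR\times(d-\theta,d+\theta)i$; since $2\theta<2\pi$ this branch is single-valued and $\Omega$ is simply connected, so it qualifies as a domain $D$ for Proposition~\ref{proposition:Dff'}. Setting $f(\zeta):=h(e^\zeta)$ for $\zeta\in\Omega$, the chain rule gives $f'(\zeta)=e^\zeta h'(e^\zeta)=(\xi\partial_\xi h)(\xi)$ with $\xi=e^\zeta$, so the quantity to be estimated is precisely $f'(\zeta)$.

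Next I would translate the hypothesis into the setting of the strip. Writing $d(\zeta):=\delta(e^\zeta)$ and noting $|e^\zeta|^{2k}=|\xi|^{2k}$, the assumed bound $|h(\xi)(1+|\xi|^{2k})\delta(\xi)^n|\le C$ becomes exactly $|f(\zeta)|\le C/\big((1+|e^\zeta|^{2k})d(\zeta)^n\big)$, which is hypothesis \eqref{equation:fCkn} of Proposition~\ref{proposition:Dff'}.

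The one point that needs checking — and which I regard as the only genuine content beyond bookkeeping — is that $d(\zeta)=\delta(e^\zeta)$ satisfies the boundary-distance condition \eqref{equation:dboundary} on $D=\Omega$. This is immediate from the explicit form $d(\zeta)=\min\{d+\theta-\Im\zeta,\,-d+\theta+\Im\zeta,\,1\}$: since the distance from $\zeta$ to $\partial\Omega$ equals $\min\{d+\theta-\Im\zeta,\,\Im\zeta-(d-\theta)\}$, one has $d(\zeta)\le\min\{1,d(\zeta,\partial\Omega)\}$, and the Lipschitz estimate $d(\zeta+\eta)\ge d(\zeta)-|\eta|$ follows because $d$ is a minimum of $1$-Lipschitz affine functions of $\Im\zeta$ together with $|\Im\eta|\le|\eta|$. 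This is precisely the geometric inequality already recorded in \eqref{equation:dboundary}, so no new work is required here.

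With these hypotheses verified, applying Proposition~\ref{proposition:Dff'} yields $|f'(\zeta)|\le e^{2k+1}(n+1)C/\big((1+|e^\zeta|^{2k})d(\zeta)^{n+1}\big)$. Finally I would translate back via $f'(\zeta)=\xi\partial_\xi h(\xi)$, $d(\zeta)=\delta(\xi)$ and $|e^\zeta|^{2k}=|\xi|^{2k}$, which rearranges directly into the claimed estimate \eqref{equation:hnkC}. I expect no real obstacle: the whole argument is the dictionary between the sector and the strip, combined with the already-established inequality \eqref{equation:dboundary} and the previously proved Proposition~\ref{proposition:Dff'}.
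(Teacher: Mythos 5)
Your proof is correct and takes essentially the same route as the paper's: the paper also proves Proposition~\ref{proposition:Shh'} by applying Proposition~\ref{proposition:Dff'} to $f(\zeta)=h(e^\zeta)$ on $\Omega=\log S$ with $d(\zeta)=\delta(e^\zeta)$, using $f'(\zeta)=\xi\partial_\xi h(\xi)$. Your explicit check that $d(\zeta)$ satisfies \eqref{equation:dboundary} merely fills in a detail the paper records without proof just before Proposition~\ref{proposition:Dff'}.
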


\begin{proof}
It suffices to apply Proposition~\ref{proposition:Dff'} to the function
$f(\zeta)=h(e^\zeta)$ for $\zeta\in \Omega=\log(S)$ and $d(\zeta)=\delta(e^\zeta)$, noticing that
$$
f^\prime(\zeta)=e^\zeta h^\prime(e^\zeta)=\xi\partial_\xi
h(\xi)
$$
and
$$ |f(\zeta)|\le\frac C{(1+|e^\zeta|^{2k})d(\zeta)^n}.
$$
\\[-5pt]
\end{proof}

The estimate \eqref{equation:hnkC} can be also expressed as follows: for all $\xi\in S$,
\begin{eqnarray*}
&&(1+|\xi|^{2k})\delta(\xi)^{n+1}|\xi \partial_\xi h(\xi)|\\
&&\qquad\qquad\qquad\le (n+1) e^{2k+1}\,\sup_{\xi'\in S}|h(\xi')(1+|\xi'|^{2k})\delta(\xi')^n|;
\end{eqnarray*}
if we put $k=1$ and replace $n$ by $n-1$, we find, for all $\xi\in S$ and $n\ge1$:
\begin{eqnarray}\label{equation:hn1}
&&(1+|\xi|^2)\delta(\xi)^{n}|\xi \partial_\xi h(\xi)|\\
&&\qquad\qquad\qquad\le  e^{3}\,n\,\sup_{\xi'\in S}|h(\xi')(1+|\xi'|^{2})\delta(\xi')^{n-1}|\,.
\nonumber
\end{eqnarray}
This estimate permits to establish the following interesting result.

\begin{cor}\label{cor:key}
Let $S=S(d,\theta)$, $\mu$ and $n$ as in Definition~\ref{definition:Nagumo}. Suppose $n\ge 1$ and let $f\in\EE_{S,\mu,n-1}$. If $\xi f\in\EE_{S,\mu,n}$, then $\xi\partial_\xi f(\xi)\in\EE_{S,\mu,n}$ and, moreover, the following estimate holds:
\begin{equation}\label{equation:key1}
 \Vert\xi\partial_\xi f(\xi)\Vert_{S,\mu,n}\le e^3\,n\Vert f\Vert_{S,\mu,n-1}+\vert\mu\vert\Vert\xi f\Vert_{S,\mu,n}\,.
\end{equation}
\end{cor}

\begin{proof}
If we write $h(\xi)= f(\xi)e^{-\mu\xi}$ for all $\xi\in S$, it follows that
$$
M_0\vert (1+\xi|^2)\vert h(\xi)\vert \delta(\xi)^{n-1}\le \Vert f\Vert_{S,\mu,n-1},
$$
where $M_0$ denotes the positive constant given by \eqref{equation:M_0}.
From relation \eqref{equation:hn1} one deduces immediately that
\begin{equation}\label{equation:hfn-1}
M_0(1+|\xi|^2)\delta(\xi)^{n}|\xi \partial_\xi h(\xi)|\le  {e^3}\,n \Vert f\Vert_{S,\mu,n-1}.
\end{equation}

On the other hand, since
\begin{equation*}\label{equation:hf}
\xi\partial_\xi h(\xi)=e^{-\mu\xi} \xi\partial_\xi f(\xi)-\mu \xi f(\xi)e^{-\mu\xi},
\end{equation*}
from \eqref{equation:hfn-1} we obtain:
\begin{eqnarray*}
&&M_0|(1+|\xi|^2)\delta^{n}(\xi)e^{-\mu \xi} \xi\partial_\xi f(\xi)|
\\
&&\qquad\qquad\qquad\le {e^3}\,n\,\Vert f\Vert_{S,\mu,n-1} + M_0(1+|\xi|^2)\delta(\xi)^n\,|e^{-\mu \xi}\mu\xi f(\xi)|\,. \nonumber
\end{eqnarray*}
We finish the proof by taking the sup of both sides for all $\xi\in S$ and making use of the definition of $\Vert\cdot\Vert_{S,\mu,n}$ and that of $\Vert\cdot\Vert_{S,\mu,n-1}$, respectively.
\end{proof}
 
\subsection{Proof of lemma~\ref{lemma:key}}\label{subsection:keyproof}
\begin{proof}By hypothesis, $(n-b-c\xi)f\in\EE_{S,\mu,n-1}$ with $n\ge 1$; so, we may define:
\begin{equation}\label{equation:Kn-1}
K_{n-1}:=\Vert (n-b-c\xi)f\Vert_{S,\mu,n-1}<\infty.
\end{equation}
As in the proof of Corollary~\ref{cor:key}, let $h(\xi)= f(\xi)e^{-\mu\xi}$ for all $\xi\in S$ and, by a similar way, we can find the following estimate:
\begin{eqnarray}\label{equation:M0h}
&&M_0|(1+|\xi|^2)\delta^{n}(\xi)e^{-\mu \xi} \xi\partial_\xi f(\xi)|
\\
&&\qquad\qquad\qquad\le \frac{e^3}\sigma\,K_{n-1} + M_0(1+|\xi|^2)\delta(\xi)^n\,|e^{-\mu \xi}\mu\xi f(\xi)|\,. \nonumber
\end{eqnarray}
Since $\delta(\xi)\le 1$, relation \eqref{equation:M0h} implies that
$$
\Vert\xi\partial_\xi f(\xi)\Vert_{S,\mu,n}\le C\,K_{n-1}=C\,\Vert (n-b-c\xi)f\Vert_{S,\mu,n-1}
$$
if we set
$$
C=\frac{e^3}\sigma+\sup_{\xi\in S}\frac{\vert\mu\xi|}{|n-b-c\xi|}.
$$
So from \eqref{equation:nbcsigma}, we have
$$
C<\frac{e^3+\vert\mu\vert}{\sigma}=E.
$$
The proof of Lemma \ref{lemma:key} is complete. 
\\[-5pt]
\end{proof}

\section{Two extensions}\label{section:extension}

The present section will be devoted to make some extensions  for results obtained in the last two sections, \S\ref{section:spaces} and \S\ref{section:lemma}. The first extension will be given by adding to any sector $S(d,\theta)$ an open disc centered at the origin, and the second one will concern  the case of any positive level $k$.

\subsection{Case of a sector joined by a disc}\label{subsection:extensiondisc}

For any $R\ge0$ and $d\in\RR$, $\theta\in(0,\pi)$, we define
$$
S(R;d,\theta):=S(d,\theta)\cup\{\xi\in\CC: 0<\vert \xi\vert<R\}\,.
$$
Noticing that $S(0;d,\theta)=S(d,\theta)$, we will see how to continue to have results known for $S(d,\theta)$ while replaced by $S(R;d,\theta)$. A such sector may be said {\it sector joined by a disc}.

In the proof of Proposition~\ref{proposition:Shh'}, one identifies each open sector
$S(d,\theta)$ to a horizontal strip, saying
$$\Omega(d,\theta)=\RR \times \{(d-\theta,d+\theta)i\},
$$ via the complex logarithm application $\log$ (with principal branch...) and, by this way, the angular
distance $\delta(\xi,S)$ given by \eqref{equation:deltaxi}
is exactly the distance of $\log \xi$ to the boundary of
$\Omega(d,\theta)$. This observation inspires the following definition.

\begin{definition}\label{definition:deltaSR}
Let $S=S(R;d,\theta)$ with $R>0$. Let
$$\Omega=\Omega(R;d,\theta):=\Omega(d,\theta)\cup\{\eta\in\CC:\Re\eta<\ln R\}.
$$
We define, for any $\xi\in S$:
\begin{equation}\label{equation:deltaSR}
 \delta(\xi))=\delta(\xi,S):=\min(1,\inf_{\eta\in\partial\Omega}\vert
\log\xi-\eta\vert)\,,
\end{equation}
where $\log\xi$ denotes any number $\eta\in\Omega(R;d,\theta)$ such that $\xi=e^\eta$.
\end{definition}

See Figure A below for the correspondence between $S(R;d,\theta)$ and $\Omega(R;d,\theta)$. As $R\to0$, we see the domain $\Omega(R;d,\theta)$ approaching the horizontal strip $\Omega(d,\theta)$.

\begin{figure}[htbp]
 \centerline{\includegraphics[width=3.4in,
height=1.3in]{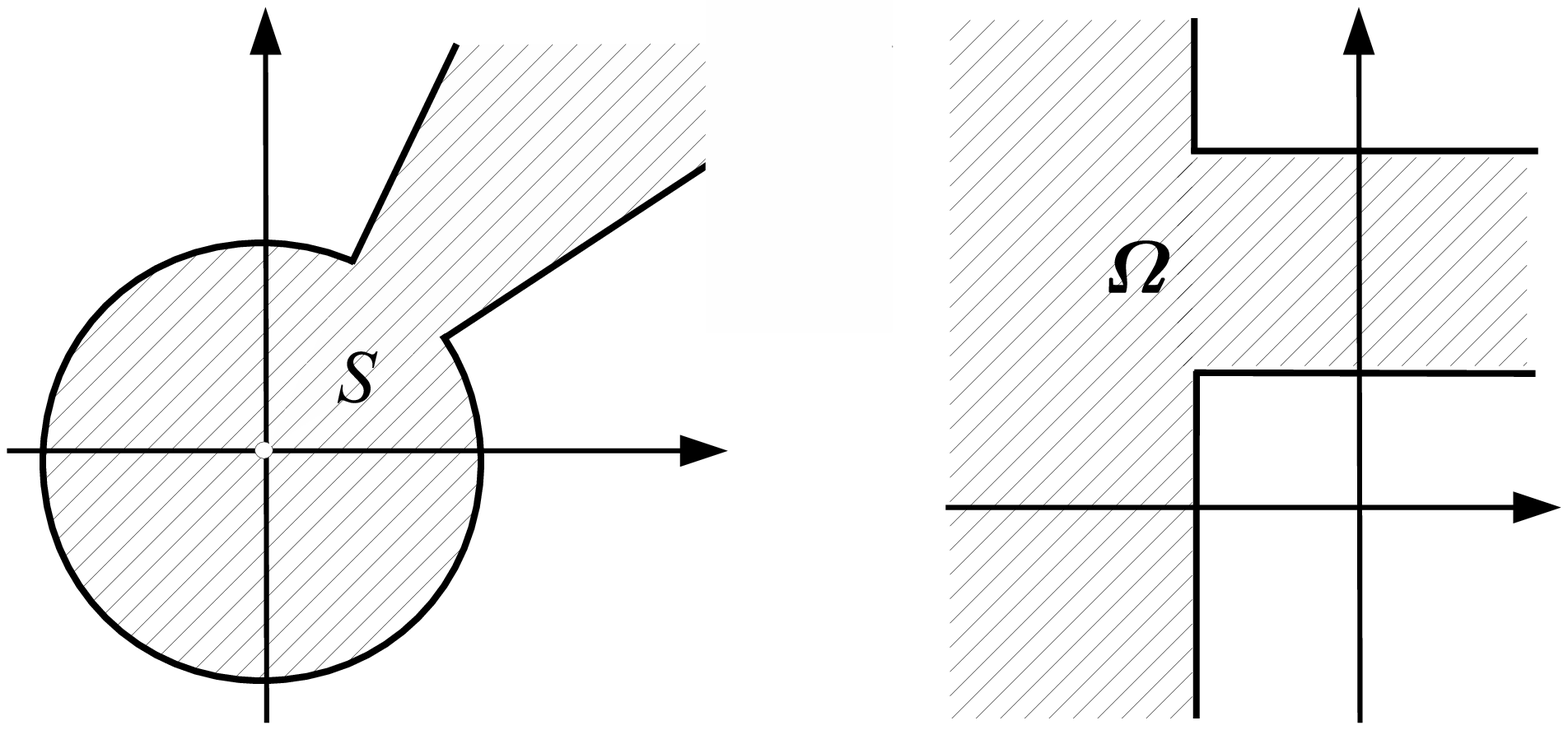}}
$$S(R;d,\theta)\ni\xi\longrightarrow \eta=\log\xi\in\Omega(R;d,\theta)$$
\vskip 2mm
\centerline{\bf Figure A}
\end{figure}

Remark that if $\xi$ belongs to the open disc $D(0;R)$, then $\delta(\xi,S)$ depends on the distance of $\xi$ to the boundary of the disc, that is to say, $\delta(\xi,S)$ depends of $\vert\xi\vert$. In this case, \eqref{equation:deltaSR} can be read as follows:
\begin{equation}\label{equation:deltaR}
\delta(\xi)=1\quad\hbox{or}\quad
 \delta(\xi)=\ln R-\ln\vert\xi\vert
\end{equation}
if
$$
0<\vert\xi\vert\le R/e\quad\hbox{or}\quad
R/e\le \vert\xi\vert<R,
$$
respectively.

\begin{definition}\label{definition:NagumoR}
Let $S=S(R;d,\theta)$, $\mu\in(0,\infty e^{-id})$ and let
$n\ge 0$.  Let $f\in\OO(S)$. We say that $f$ belongs to $\EE_{S,\mu,n}$ if
$$
\|f\|_{S,\mu ,n}:=M_0\,\sup_{\xi\in S}|f(\xi)e^{-\mu
 \xi}(1+|\xi|^2)\delta(\xi)^n |<\infty,
$$
where $M_0$ denotes the positive constant given in \eqref{equation:M_0} and $\delta(\xi)$, the function defined by \eqref{equation:deltaSR}.
\end{definition}

In the above, the set $S(R;d,\theta)$ does not contain the point at the origin of the complex plane and, therefore, the function $f$ is not, {\it a priori}, assumed to be defined at this point. From \eqref{equation:deltaR}, one may observe that, as $\xi\to0$,
$$
\vert f(\xi)\vert\lesssim\frac{\|f\|_{S,\mu ,n}}{M_0},
$$
which implies that $f$ can be continued
to be an analytic function at $\xi=0$.

\begin{proposition}\label{proposition:fnear0}
Let $S=S(R;d,\theta)$, with $R>0$. Let $f\in\OO(S)$. If $\lim_{\xi\to0}f(\xi)=0$ and $f\in\EE_{S,\mu,n}$, then $\frac f\xi\in\EE_{S,\mu,n}$ and, moreover:
\begin{equation}\label{equation:f/xi}
\Vert \frac f\xi\Vert_{S,\mu,n}\le \frac eR\,\Vert f\Vert_{S,\mu,n}.
\end{equation}
\end{proposition}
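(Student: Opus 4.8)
The plan is to bound, pointwise on $S$, the weight defining $\Vert f/\xi\Vert_{S,\mu,n}$ by $\frac eR$ times the corresponding weight for $f$, and then take the supremum. First I would use the hypotheses to put $f/\xi$ into a usable analytic form: as observed right after Definition~\ref{definition:NagumoR}, membership in $\EE_{S,\mu,n}$ forces $f$ to stay bounded as $\xi\to0$, so the singularity at the origin is removable; together with $\lim_{\xi\to0}f(\xi)=0$ this produces an analytic extension with $f(0)=0$. Consequently $h(\xi):=\frac{f(\xi)}{\xi}\,e^{-\mu\xi}$ is holomorphic on the full disc $D(0;R)$. Writing $C:=\Vert f\Vert_{S,\mu,n}/M_0$, the whole proposition reduces to the uniform estimate
\begin{equation*}
\Big|\frac{f(\xi)}{\xi}\,e^{-\mu\xi}(1+|\xi|^2)\delta(\xi)^n\Big|\le \frac eR\,C,\qquad \xi\in S,
\end{equation*}
which I would verify separately on the two regions $\{|\xi|\ge R/e\}$ and $\{0<|\xi|<R/e\}$.

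On $\{|\xi|\ge R/e\}$ the estimate is immediate, since the left-hand side equals $\frac1{|\xi|}\,\big|f(\xi)e^{-\mu\xi}(1+|\xi|^2)\delta(\xi)^n\big|\le \frac1{|\xi|}\,C$ and $\frac1{|\xi|}\le\frac eR$ there.

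The main obstacle is the inner punctured disc $\{0<|\xi|<R/e\}$, where $1/|\xi|$ is unbounded; here I would trade the unbounded factor for the analyticity of $h$. By \eqref{equation:deltaR} one has $\delta(\xi)=1$ throughout this region, and the closed disc $\overline{D(0;R/e)}$ is compactly contained in $D(0;R)$, so the maximum modulus principle applies to $h$ and gives $|h(\xi)|\le\max_{|\xi'|=R/e}|h(\xi')|$. On the bounding circle $|\xi'|=R/e$ one has both $1/|\xi'|=e/R$ and $\delta(\xi')=1$, and this circle lies in $S$ (being inside $\{0<|\xi|<R\}$), so the defining bound of the norm yields $|h(\xi')|=\frac eR\,|f(\xi')e^{-\mu\xi'}|\le \frac eR\,C/(1+(R/e)^2)$. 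Hence $|h(\xi)|\le \frac eR\,C/(1+(R/e)^2)$ for $|\xi|<R/e$, and multiplying by the remaining factor $(1+|\xi|^2)\le 1+(R/e)^2$ cancels the denominator and delivers the desired bound (recall $\delta\equiv1$ here).

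Taking the supremum over both regions and multiplying by $M_0$ then produces both the finiteness of $\Vert f/\xi\Vert_{S,\mu,n}$ and the inequality \eqref{equation:f/xi}. The only points demanding care are to confirm that the whole circle $|\xi'|=R/e$ sits inside $S$ and that $h$ is genuinely holomorphic across $0$, both of which follow from the preliminary reductions above.
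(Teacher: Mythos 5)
Your proof is correct and follows exactly the route the paper sketches: analytic continuation of $f/\xi$ through the origin (using $f(0)=0$), the maximum principle on the closed disc $\bar D(0;R/e)$, and a separate check of the norm weight on that disc and on its complement in $S$. Your one refinement---applying the maximum principle to $h(\xi)=\frac{f(\xi)}{\xi}e^{-\mu\xi}$ rather than to $f/\xi$ alone---is the natural way to fill in the details the paper omits, since it keeps the exponential factor (which need not attain its maximum on the boundary circle by itself) inside the function being maximized.
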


\begin{proof}
In view of the fact that $f$ may be analytically continued at zero and that its limit value is zero, it follows that $\frac f\xi$ can be continued as analytic function at $\xi=0$. Therefore, applying the maximum principle to $\frac f\xi$ on the closed disc $\bar D:=\bar D(0; R/e)$ allows us to get \eqref{equation:f/xi} if one checks the definition of $\Vert\frac f\xi\Vert_{S,\mu,n}$ over the disc $\bar D$ and then over its complement separately. We omit the details.
\end{proof}

One can state similar properties for Banach spaces
$(\EE_{S,\mu,n},\|\cdot\|)$ as in the case of $R=0$. Namely, instead
of Proposition~\ref{proposition:convolution}, one can notice the following fact.
\begin{proposition}\label{proposition:NagumoR}
Let $S=S(R;d,\theta)$, with $d\in\SS^1$, $\theta\in(0,\pi)$ and
$R>0$.
If $f\in\EE_{S,\mu ,n}$ and $g\in\EE_{S,\mu
,n^\prime}$, then
 $f*g\in\EE_{S,\mu ,n+n^\prime}$
 and $$\|f*g\|_{S,\mu ,n+n^\prime}\le \|f\|_{S,\mu ,n}\,\|g\|_{S,\mu ,n^\prime}.$$
\end{proposition}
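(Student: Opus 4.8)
The plan is to reduce the statement to the computation already performed in the proof of Proposition~\ref{proposition:convolution}\eqref{assertion:fgmunn'}, the only genuinely new ingredient being a monotonicity property of the function $\delta(\xi)$ of Definition~\ref{definition:deltaSR} along the rays issuing from the origin. First I would record that $S=S(R;d,\theta)$ is star-shaped with respect to $0$: for any $\xi\in S$ the segment $(0,\xi)$ stays in the disc $D(0;R)$ while $|\tau|<R$ and in $S(d,\theta)$ afterwards, so it lies entirely in $S$. Consequently the convolution
\[
f*g(\xi)=\int_0^\xi f(\tau)\,g(\xi-\tau)\,d\tau,
\]
taken along that segment, is well defined and analytic on $S$, and the change of variable $\tau=s\,e^{i\arg\xi}$ gives exactly the integral $I(|\xi|)$ of the earlier proof.

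The main step is the claim: \emph{for $\tau$ on the open segment $(0,\xi)$ one has $\delta(\tau)\ge\delta(\xi)$ and $\delta(\xi-\tau)\ge\delta(\xi)$.} Since $\tau$ and $\xi-\tau$ share the argument of $\xi$ while $|\tau|,|\xi-\tau|\le|\xi|$, it suffices to prove that $\delta$ is non-increasing in $|\xi|$ along each ray of fixed argument. I would argue this in the logarithmic picture of Definition~\ref{definition:deltaSR}: writing $\eta=\log\xi=x+iy$, the boundary $\partial\Omega(R;d,\theta)$ consists of the two horizontal rays $\{y=d\pm\theta,\ x\ge\ln R\}$ and the two vertical rays $\{x=\ln R,\ |y-d|\ge\theta\}$, and is therefore contained in the half-plane $\{\Re\eta\ge\ln R\}$, while $\Omega$ contains the whole left half-plane $\{\Re\eta<\ln R\}$. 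For $|\xi|\ge R$ (that is $x\ge\ln R$) the nearest boundary points lie on the horizontal rays, so $\delta$ reduces to $\min(1,\,d+\theta-y,\,y-d+\theta)$ and depends only on $y=\arg\xi$; for $|\xi|<R$ (that is $x<\ln R$) the nearest boundary point is one of the corners $(\ln R,d\pm\theta)$ or a point of the vertical rays, so $\delta(\xi)=\min\bigl(1,\ \inf_{\eta\in\partial\Omega}|\log\xi-\eta|\bigr)$ increases as $x$ decreases, the two descriptions agreeing at $x=\ln R$. This is precisely the content of \eqref{equation:deltaR} reorganised for an arbitrary argument, and it yields the claim.

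With the claim in hand the estimate is a verbatim repetition of Proposition~\ref{proposition:convolution}. For $\xi\in S$ and $\tau\in(0,\xi)$, bounding $|f(\tau)|$ and $|g(\xi-\tau)|$ by their respective norms and replacing $\delta(\tau)^{-n}$, $\delta(\xi-\tau)^{-n'}$ by the larger quantities $\delta(\xi)^{-n}$, $\delta(\xi)^{-n'}$ gives
\[
|f(\tau)\,g(\xi-\tau)|\le\frac{\|f\|_{S,\mu,n}\,\|g\|_{S,\mu,n'}}{M_0^{2}}\,\frac{|e^{\mu\xi}|\,\delta(\xi)^{-n-n'}}{(1+|\tau|^2)(1+|\xi-\tau|^2)}.
\]
Integrating along $(0,\xi)$ and invoking the bound $I(|\xi|)\le M_0/(1+|\xi|^2)$ from the proof of Proposition~\ref{proposition:convolution} yields
\[
|f*g(\xi)|\le\frac{\|f\|_{S,\mu,n}\,\|g\|_{S,\mu,n'}}{M_0}\,\frac{|e^{\mu\xi}|\,\delta(\xi)^{-n-n'}}{1+|\xi|^2}.
\]
Multiplying by $M_0\,|e^{-\mu\xi}|(1+|\xi|^2)\delta(\xi)^{n+n'}$ and taking the supremum over $\xi\in S$ gives the announced inequality, whence $f*g\in\EE_{S,\mu,n+n'}$.

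I expect the only real obstacle to be the verification of the monotonicity claim across the threshold $|\xi|=R$, that is near the corners $(\ln R,d\pm\theta)$, where the description of the nearest boundary point switches from the horizontal rays (argument-only dependence) to the corner or the vertical rays. Once the elementary geometry of $\Omega(R;d,\theta)$ is settled there, the analytic part is identical to the sectorial case treated in Proposition~\ref{proposition:convolution}, the favourable direction of the inequality $\delta(\tau),\delta(\xi-\tau)\ge\delta(\xi)$ replacing the exact equality available when $R=0$.
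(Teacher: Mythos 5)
Your proof is correct and takes essentially the same route as the paper's: the paper's own argument consists precisely of asserting the inequalities $\delta(\tau,S)\ge\delta(\xi,S)$ and $\delta(\xi-\tau,S)\ge\delta(\xi,S)$ for $\tau\in(0,\xi)$ (its relation \eqref{equation:deltatau}) and then remarking that the computation of Proposition~\ref{proposition:convolution}, Assertion~\ref{assertion:fgmunn'}, carries over verbatim, with details left to the reader. Your monotonicity analysis of $\delta$ in the logarithmic picture (horizontal rays governing the region $|\xi|\ge R$, corners and vertical rays governing $|\xi|<R$) supplies exactly the verification the paper omits.
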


\begin{proof}
Let $\xi\in S$. For any $\tau\in (0,\xi)$, one can see that
\begin{equation}\label{equation:deltatau}
\delta(\tau,S)\ge \delta(\xi,S),\quad
\delta(\xi-\tau,S)\ge \delta(\xi,S),
\end{equation}
so that one can give a similar proof as that done for Proposition~\ref{proposition:convolution}, (1). The details are left to the reader.
\end{proof}

With respect to the derivative of a function belonging to $\EE_{S,\mu,n}$, we mention the following result.

\begin{proposition}\label{proposition:keyRderivative}
Let $S=S(R;d,\theta)$ with $R>0$. Let $f\in\EE_{S,\mu,n-1}$ for some $\mu\in(0,\infty e^{-id})$ and $n\ge1$. Then $\partial_\xi f\in\EE_{S,\mu,n}$ and
\begin{equation}\label{equation:keyRderivative}
\|\partial_\xi f(\xi)\|_{S,\mu,n} \le (\frac{ne^{4}}R+|\mu|)\, \|f(\xi)\|_{S,\mu,n-1}.
\end{equation}
\end{proposition}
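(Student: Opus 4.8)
The plan is to transcribe the derivation of Corollary~\ref{cor:key} to the joined sector $S(R;d,\theta)$, organized so that the factorization $e^4=e^3\cdot e$ is produced in two separate steps: the $e^3$ from a Cauchy estimate in the logarithmic plane, and the extra $e$ (together with the crucial gain of $1/R$) from the division-by-$\xi$ estimate of Proposition~\ref{proposition:fnear0}. Throughout I set $h(\xi)=f(\xi)e^{-\mu\xi}$, so that $\|f\|_{S,\mu,m}=M_0\sup_{\xi\in S}|h(\xi)(1+|\xi|^2)\delta(\xi)^m|$ for every $m\ge0$, and I record the elementary identities $\partial_\xi h=e^{-\mu\xi}(\partial_\xi f-\mu f)$ and $\partial_\xi f=e^{\mu\xi}\partial_\xi h+\mu f$.

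For the first step I pass to $\zeta=\log\xi$ and put $g(\zeta)=h(e^\zeta)$, working on the domain $\Omega=\Omega(R;d,\theta)$ of Definition~\ref{definition:deltaSR} with the distance function $d(\zeta)=\delta(e^\zeta)$ given by \eqref{equation:deltaSR}. Since $\Omega(R;d,\theta)$ is the union of the horizontal strip $\Omega(d,\theta)$ and the left half-plane $\{\Re\eta<\ln R\}$ -- two convex sets whose intersection is a non-empty convex half-strip -- it is simply connected, and the capped distance of \eqref{equation:deltaSR} satisfies \eqref{equation:dboundary} on it. Thus Proposition~\ref{proposition:Dff'} applies with $k=1$ and with $n$ there replaced by $n-1$; because $g'(\zeta)=e^\zeta h'(e^\zeta)=\xi\partial_\xi h(\xi)$, it yields, exactly as in \eqref{equation:hn1},
$$M_0(1+|\xi|^2)\delta(\xi)^n\,|\xi\partial_\xi h(\xi)|\le e^3\,n\,\|f\|_{S,\mu,n-1},\qquad \xi\in S.$$
Equivalently, the function $F:=e^{\mu\xi}\,\xi\partial_\xi h=\xi(\partial_\xi f-\mu f)$ belongs to $\EE_{S,\mu,n}$ and satisfies $\|F\|_{S,\mu,n}\le e^3 n\,\|f\|_{S,\mu,n-1}$.

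For the second step I observe that $f$ continues analytically to $\xi=0$ (as noted after Definition~\ref{definition:NagumoR}), hence so does $h$; therefore $\partial_\xi h$ is bounded near the origin and $F=e^{\mu\xi}\,\xi\partial_\xi h\to0$ as $\xi\to0$. Proposition~\ref{proposition:fnear0} then applies to $F$ and gives $F/\xi\in\EE_{S,\mu,n}$ with
$$\Big\|\tfrac F\xi\Big\|_{S,\mu,n}\le\frac eR\,\|F\|_{S,\mu,n}\le\frac{n e^4}R\,\|f\|_{S,\mu,n-1}.$$
As $F/\xi=\partial_\xi f-\mu f$, this is the estimate $\|\partial_\xi f-\mu f\|_{S,\mu,n}\le (ne^4/R)\|f\|_{S,\mu,n-1}$. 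Adding back the linear term and invoking the monotonicity \eqref{equation:2norms} in the form $\|f\|_{S,\mu,n}\le\|f\|_{S,\mu,n-1}$ (valid since $\delta\le1$), I conclude
$$\|\partial_\xi f\|_{S,\mu,n}\le\|\partial_\xi f-\mu f\|_{S,\mu,n}+|\mu|\,\|f\|_{S,\mu,n}\le\Big(\frac{ne^4}R+|\mu|\Big)\|f\|_{S,\mu,n-1},$$
which is \eqref{equation:keyRderivative}, and in particular $\partial_\xi f\in\EE_{S,\mu,n}$.

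The only genuinely non-mechanical point -- the main obstacle -- is justifying the first step on the joined domain: one must check that Proposition~\ref{proposition:Dff'} is legitimately applicable with $D=\Omega(R;d,\theta)$, i.e. that this domain is simply connected and that the capped distance of \eqref{equation:deltaSR} verifies the two inequalities \eqref{equation:dboundary}. Once that geometric verification is in place, the rest is a direct transcription of Corollary~\ref{cor:key}: the exponential bookkeeping through $h$ and $F$ is routine, and the decisive gain of $1/R$ comes entirely for free from Proposition~\ref{proposition:fnear0}.
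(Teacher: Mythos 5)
Your proof is correct and follows essentially the same route as the paper's: both write $f=h\,e^{\mu\xi}$, control $e^{\mu\xi}\xi\partial_\xi h$ via the Cauchy estimate of Proposition~\ref{proposition:Dff'} in the logarithmic plane (yielding the factor $ne^{3}$), divide by $\xi$ via Proposition~\ref{proposition:fnear0} (yielding the factor $e/R$), and add back the term $\mu f$ using $\|f\|_{S,\mu,n}\le\|f\|_{S,\mu,n-1}$. The only difference is that you perform the two main estimates in the opposite order and spell out the details the paper leaves implicit (simple connectivity of $\Omega(R;d,\theta)$, the verification of \eqref{equation:dboundary}, and the vanishing of $e^{\mu\xi}\xi\partial_\xi h$ at the origin needed for Proposition~\ref{proposition:fnear0}), which is a welcome tightening rather than a new idea.
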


\begin{proof}
As in the proof of Corollary~\ref{cor:key} and also that of Lemma~\ref{lemma:key}, we write $f(\xi)=h(\xi)e^{\mu\xi}$ and, by taking into account of \eqref{equation:f/xi}, one can easily check that
\begin{equation}\label{equation:keyRderivativefh}
\|\partial_\xi f(\xi)\|_{S,\mu,n}\le
\frac{e}{R}\,\|e^{\mu\xi}\xi\partial_\xi h(\xi)\|_{S,\mu,n} + \|\mu f(\xi)\|_{S,\mu,n}.
\end{equation}
Since $d(\zeta):=\delta(e^\zeta,S)$ satisfies condition \eqref{equation:dboundary} of Proposition~\ref{proposition:Dff'},
one can also prove that, for any  holomorphic function $h$ in $S(R;d,\theta)$, if we let
$$
C_{h,n}:=\sup_{\xi\in S}|h(\xi)(1+|\xi|^{2})\delta(\xi,S)^{n-1}|<\infty,
$$
then the following relation holds (see the proof of Proposition~\ref{proposition:Shh'}):
$$ |(1+|\xi|^{2})\delta(\xi,S)^{n}\xi \partial_\xi h(\xi)|\le n e^{3}
C_{h,n}.
$$
Therefore, one can find that
\begin{equation}\label{eqh:no}
\|e^{\mu\xi}\xi\partial_\xi h(\xi)\|_{S,\mu,n}\le n\, e^{3}\,\|e^{\mu\xi} h(\xi)\|_{S,\mu,n-1},
\end{equation}
which, together with  \eqref{equation:keyRderivativefh} implies relation \eqref{equation:keyRderivative} and thus one ends the proof of Proposition~\ref{proposition:keyRderivative}.
\end{proof}
As application, we give the following result, that is in the same line as our key Lemma~\ref{lemma:key}.

\begin{cor}\label{cor:keyR}Let $S=S(R;d,\theta)$ with $R>0$. Let $f\in\OO(S)$.
Let $P(n,\xi)$ be a sequence of functions defined and analytic over $S$. Suppose that the following condition is fulfilled:
$$
C:=\sup\limits_{(n,\xi)\in\NN\times
S}\frac{\max(n,\vert\xi\vert)}{\left|P(n,\xi)\right|}<\infty\,.
$$
If $P(n,\xi)\, f(\xi)\in\EE_{S,\mu ,n-1}$, then
 $\partial_\xi f\in \EE_{S,\mu ,n}$, $\partial_\xi (\xi\partial_\xi) f\in\EE_{S,\mu ,n+1}$ and, moreover, the following estimates hold:
\begin{equation}\label{equation:keyRderivativeP}
 \|\partial_\xi f\|_{S,\mu ,n} \le   \frac{eE_0}R\, \|P(n,\xi)\,f\|_{S,\mu ,n-1}
\end{equation}
and \begin{equation}\label{equation:keyRderivativeP1}
\|\partial_\xi \xi\partial_\xi f\|_{S,\mu ,n+1} \le \frac{ne{E_0}^2}R\,
\|P(n,\xi)\,f\|_{S,\mu ,n-1},
    \end{equation}
 where we set
$E_0=(2e^3+|\mu|)C$.
\end{cor}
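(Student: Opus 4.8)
The plan is to reduce everything to Proposition~\ref{proposition:keyRderivative}, using the hypothesis on $P$ only to replace $f$ (and $\xi f$) by the controlled quantity $P(n,\xi)f$. The definition of $C$ gives, for every $n\ge1$ and $\xi\in S$, the two pointwise bounds $\frac1{|P(n,\xi)|}\le\frac Cn$ and $\frac{|\xi|}{|P(n,\xi)|}\le C$, coming respectively from $\max(n,|\xi|)\ge n$ and $\max(n,|\xi|)\ge|\xi|$. Writing $f=\frac{P(n,\xi)f}{P(n,\xi)}$ and taking Nagumo norms, these translate into $\|f\|_{S,\mu,n-1}\le\frac Cn\,\|P(n,\xi)f\|_{S,\mu,n-1}$ and $\|\xi f\|_{S,\mu,n-1}\le C\,\|P(n,\xi)f\|_{S,\mu,n-1}$, since $P(n,\xi)f\in\EE_{S,\mu,n-1}$ by hypothesis. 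In particular $f\in\EE_{S,\mu,n-1}$, so Proposition~\ref{proposition:keyRderivative} applies.

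For the first estimate \eqref{equation:keyRderivativeP}, I would feed $\|f\|_{S,\mu,n-1}\le\frac Cn\|P(n,\xi)f\|_{S,\mu,n-1}$ into Proposition~\ref{proposition:keyRderivative}, obtaining
$$\|\partial_\xi f\|_{S,\mu,n}\le\Big(\frac{ne^4}R+|\mu|\Big)\|f\|_{S,\mu,n-1}\le\Big(\frac{e^4}R+\frac{|\mu|}n\Big)C\,\|P(n,\xi)f\|_{S,\mu,n-1}.$$
Since $n\ge1$, the prefactor is dominated by $\frac{eE_0}R$ with $E_0=(2e^3+|\mu|)C$, which is exactly \eqref{equation:keyRderivativeP}.

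For the second estimate \eqref{equation:keyRderivativeP1}, the idea is to first show that $g:=\xi\partial_\xi f$ lies in $\EE_{S,\mu,n}$ with $\|g\|_{S,\mu,n}\le E_0\,\|P(n,\xi)f\|_{S,\mu,n-1}$, and then apply Proposition~\ref{proposition:keyRderivative} to $g$ at level $n+1$. For the first step I would reuse the decomposition from the proof of Proposition~\ref{proposition:keyRderivative}: writing $h=fe^{-\mu\xi}$ one has $\xi\partial_\xi f=e^{\mu\xi}\xi\partial_\xi h+\mu\xi f$, so that by \eqref{eqh:no} together with $\delta(\xi)\le1$,
$$\|g\|_{S,\mu,n}\le ne^3\|f\|_{S,\mu,n-1}+|\mu|\,\|\xi f\|_{S,\mu,n-1}\le(e^3+|\mu|)C\,\|P(n,\xi)f\|_{S,\mu,n-1}\le E_0\,\|P(n,\xi)f\|_{S,\mu,n-1},$$
where the two norm bounds of the first paragraph were used. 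Applying Proposition~\ref{proposition:keyRderivative} to $g$ then yields $\partial_\xi g\in\EE_{S,\mu,n+1}$ with $\|\partial_\xi g\|_{S,\mu,n+1}\le(\frac{(n+1)e^4}R+|\mu|)\|g\|_{S,\mu,n}$; bounding $n+1\le2n$ for $n\ge1$ and absorbing the numerical factors into a second copy of $E_0$ gives \eqref{equation:keyRderivativeP1}.

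The only delicate point is the constant bookkeeping at the two places where a term carrying $|\mu|$ but no factor $1/R$ must be absorbed into a target bound that does carry the factor $1/R$ coming from the disc radius. This matching succeeds thanks to the slack deliberately built into $E_0=(2e^3+|\mu|)C$, provided the Borel-plane radius $R$ is taken in the natural small range (so that $|\mu|$ is dominated by $|\mu|/R$) and $C$ may be normalized to be $\ge1$. Apart from this purely numerical verification, every step is a direct substitution of the pointwise $P$-bounds into the already established derivative estimate of Proposition~\ref{proposition:keyRderivative}, so no new analytic ingredient beyond \eqref{eqh:no} and \eqref{equation:f/xi} is needed.
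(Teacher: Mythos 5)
Your proposal is correct and follows essentially the same route as the paper: the first estimate comes from feeding the pointwise bounds $n/|P(n,\xi)|\le C$ and $|\xi|/|P(n,\xi)|\le C$ into Proposition~\ref{proposition:keyRderivative}, and the second from the bound $\|\xi\partial_\xi f\|_{S,\mu,n}\le ne^3\|f\|_{S,\mu,n-1}+|\mu|\,\|\xi f\|_{S,\mu,n-1}$ (the paper's displayed observation derived from \eqref{eqh:no}) followed by a second application of Proposition~\ref{proposition:keyRderivative}. The constant-matching caveat you flag (absorbing the $|\mu|$ term, which carries no $1/R$, into $eE_0/R$ resp. $neE_0^2/R$) is equally present, and left unaddressed, in the paper's own two-line proof, so your treatment is if anything more honest.
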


\begin{proof}
Relation \eqref{equation:keyRderivativeP} follows directly from \eqref{equation:keyRderivative} and the definition of constant $C$. Furthermore, one can obtain \eqref{equation:keyRderivativeP1} from \eqref{eqh:no}, by observing that
\begin{equation*}\label{eqh2:no}
\|\xi \partial_\xi f(\xi)\|_{S,\mu,n}\le
M_0\sup\limits_{\xi\in S}|(n e^{3}+|\mu\xi|) f(\xi)e^{-\mu\xi}(1+|\xi|^2)\delta(\xi,S)^{n-1}|\,.
\end{equation*}
\\[-5pt]
\end{proof}

\subsection{Extension to an arbitrary level $k>0$}\label{subsection:extensionk}
In the rest of this section, we will discuss the case of any arbitrary positive level $k$.
Indeed, the Borel summability requires an exponential growth of at most order one at infinity where the general $k$-summability needs to make use of exponential growth of order $k$. For this matter, one can see \cite{Ra,Ba,Br,MR}.

We firstly give a version of level $k$ for the Banach spaces $(\EE_{S,\mu,n},\Vert\cdot\Vert_{S,\mu,n})$. In what follows, if $S=S(d,\theta)$, we define the so-called {\it $k$-ramified} sector $S^{(k)}$ by
$$
S^{(k)}=S^{(k)}(d,\theta):=S(\frac dk,\frac \theta k),
$$
so that we may write the following $1-1$ ramification map:
$$S(d, \theta )\ni\xi\mapsto \rho_k\xi:=\xi^{1/k}\in S^{(k)}(d,\theta).
 $$
 More general, if $f$ is a function given in some sector $S^{(k)}$, we will denote by $\rho_kf$ the function defined in the sector $S$ by the following relation:
$$
\forall \xi\in S,\quad
\rho_kf(\xi)=f(\xi^{1/k}).
$$

\begin{definition}\label{definition:kNagumo}
Let $S=S(d,\theta)$, $\mu$ and $n$ be as in Definition~\ref{definition:Nagumo}. Let $k>0$.  A function $f\in\OO(S^{(k)})$ is said belonging to the set $\EE_{S,\mu,n}^{(k)}$ if $\rho_kf\in \EE_{S,\mu,n}$. In this case, we define:
\begin{equation}\label{equation:knorm}
\Vert f\Vert^{(k)}_{S,\mu,n}=\Vert \rho_kf\Vert_{S,\mu,n}\,.
\end{equation}
\end{definition}

In other words, one may write
$\EE_{S,\mu,n}^{(k)}$ as follows:
$$
\EE_{S,\mu,n}^{(k)}={\rho_k}^{-1}(\EE_{S,\mu,n});
$$
so, it is easy to see  that each $(\EE^{(k)}_{S,\mu,n},\Vert\cdot\Vert^{(k)}_{S,\mu,n})$ constitutes a Banach space. From  \eqref{equation:2norms}, we deduce the following relations:
$$
\EE^{(k)}_{S,\mu',n'}\subset \EE^{(k)}_{S,\mu,n},\qquad
\Vert f\Vert^{(k)}_{S,\mu,n}\le \Vert f\Vert^{(k)}_{S,\mu',n'}
$$
if $n\ge n'$ and $|\mu|\ge|\mu'|$.

Let $f$ and $g$ be continuous  functions in some sector $S^{(k)}$. Following \cite[\S 2]{MR} (see also \cite[(1.7)]{Br}), the so-called {\it convolution product of level $k$ of $f$ and $g$},  traditionally denoted by $f*_kg$, is the function defined in $S^{(k)}$ by the following relation:
\begin{equation}\label{equation:fgkconvolution}
\rho_k(f*_kg)=(\rho_kf)*(\rho_kg)\,.
\end{equation}

\begin{proposition}\label{proposition:kconvolution}
Let $S=S(d,\theta)$ and $\mu$ as in Definition~\ref{definition:kNagumo} and let $n$, $n'\ge0$. Let $k>0$. The following assertions hold.
\begin{enumerate}
\item \label{assertion:kfgmunn'}If $f\in\EE_{S, \mu ,n}^{(k)}$ and $g\in\EE_{S, \mu ,n^\prime}^{(k)}$, then
 $f*_kg\in\EE_{S, \mu ,n+n^\prime}^{(k)}$
 and
\begin{equation}\label{equation:kfgmunn'}
\|f*_kg\|_{S, \mu ,n+n^\prime}^{(k)}\le \|f\|_{S, \mu ,n}^{(k)}\,\|g\|_{S, \mu ,n^\prime}^{(k)}.
\end{equation}
\item \label{assertion:kfgmumu'} Let $\mu^\prime\in(0,\infty e^{-id})$ such that
$|\mu|\le |\mu^\prime|$. If $f\in\EE_{S, \mu ,0}^{(k)}$, $g\in\EE_{S, \mu
^\prime,n}^{(k)}$, then $f*_kg\in \EE_{S, \mu ^\prime,n}^{(k)}$ and
\begin{equation}\label{equation:kfgmumu'}
\|f*_kg\|_{S, \mu ^\prime,n}^{(k)}\le C_{\mu'-\mu}\,\Vert f\Vert_{S,\mu,0}^{(k)}\,\Vert g\Vert_{S,\mu',n}^{(k)},
\end{equation}
where $
C_{\mu'-\mu}$ denotes the positive constant defined in Proposition~\ref{proposition:convolution}, Assertion~\ref{assertion:fgmumu'}.
\end{enumerate}
\end{proposition}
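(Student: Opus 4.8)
The plan is to reduce Proposition~\ref{proposition:kconvolution} to the already-proved Proposition~\ref{proposition:convolution} by applying the ramification map $\rho_k$, which by construction transports the level-$k$ convolution and the level-$k$ norms into their ordinary ($k=1$) counterparts. The key definitional facts I would lean on are \eqref{equation:knorm}, which says $\Vert f\Vert^{(k)}_{S,\mu,n}=\Vert\rho_k f\Vert_{S,\mu,n}$, and \eqref{equation:fgkconvolution}, which says $\rho_k(f*_k g)=(\rho_k f)*(\rho_k g)$. Together these two identities mean that $\rho_k$ is an isometric isomorphism from $(\EE^{(k)}_{S,\mu,n},\Vert\cdot\Vert^{(k)}_{S,\mu,n})$ onto $(\EE_{S,\mu,n},\Vert\cdot\Vert_{S,\mu,n})$ intertwining $*_k$ with $*$; so every bound valid for the ordinary convolution automatically pushes back through $\rho_k^{-1}$.

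First I would address assertion~\eqref{assertion:kfgmunn'}. Suppose $f\in\EE^{(k)}_{S,\mu,n}$ and $g\in\EE^{(k)}_{S,\mu,n'}$. By Definition~\ref{definition:kNagumo} this means $\rho_k f\in\EE_{S,\mu,n}$ and $\rho_k g\in\EE_{S,\mu,n'}$. Applying Proposition~\ref{proposition:convolution}, Assertion~\ref{assertion:fgmunn'}, to the pair $\rho_k f,\rho_k g$ gives $(\rho_k f)*(\rho_k g)\in\EE_{S,\mu,n+n'}$ together with the estimate
$$
\Vert(\rho_k f)*(\rho_k g)\Vert_{S,\mu,n+n'}\le\Vert\rho_k f\Vert_{S,\mu,n}\,\Vert\rho_k g\Vert_{S,\mu,n'}.
$$
By \eqref{equation:fgkconvolution} the left-hand convolution equals $\rho_k(f*_k g)$, so $\rho_k(f*_k g)\in\EE_{S,\mu,n+n'}$, which by definition is exactly $f*_k g\in\EE^{(k)}_{S,\mu,n+n'}$; rewriting all three norms via \eqref{equation:knorm} turns the displayed inequality into \eqref{equation:kfgmunn'}. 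Assertion~\eqref{assertion:kfgmumu'} follows by the identical argument, now feeding $\rho_k f\in\EE_{S,\mu,0}$ and $\rho_k g\in\EE_{S,\mu',n}$ into Proposition~\ref{proposition:convolution}, Assertion~\ref{assertion:fgmumu'}; the constant $C_{\mu'-\mu}$ is unchanged because it is defined purely in terms of $M_0$, $\theta$ and $|\mu'-\mu|$, none of which is affected by the ramification.

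The one genuine point to verify — and what I expect to be the only real obstacle — is that the ramification $\rho_k$ behaves correctly on the domains and the branch of the argument, so that the sector $S^{(k)}=S(d/k,\theta/k)$ maps bijectively onto $S=S(d,\theta)$ under $\xi\mapsto\xi^{1/k}$ and the integration path $(0,\xi)$ used to define $*$ corresponds under $\rho_k$ to the path defining $*_k$. This is implicitly guaranteed by the $1$--$1$ character of the ramification map recorded just before Definition~\ref{definition:kNagumo} and by the defining relation \eqref{equation:fgkconvolution}, so in the write-up I would simply invoke these and note that no new analytic estimate is needed: the entire proposition is a transport of structure along the isometry $\rho_k$. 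Since everything is routine once the intertwining is stated, I would keep the proof to a few lines and leave the verification of the domain correspondence to the reader, exactly as the paper does for Proposition~\ref{proposition:NagumoR}.
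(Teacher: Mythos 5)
Your proof is correct and takes essentially the same approach as the paper: the paper's own proof deduces Proposition~\ref{proposition:kconvolution} directly from Proposition~\ref{proposition:convolution} by invoking exactly the two identities \eqref{equation:knorm} and \eqref{equation:fgkconvolution}, i.e., transport of structure along the ramification $\rho_k$. Your write-up simply makes the isometric intertwining explicit, which is a fine (slightly more detailed) rendering of the same argument.
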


\begin{proof}
It follows directly from Proposition~\ref{proposition:convolution}, by taking into account the relations \eqref{equation:knorm} and \eqref{equation:fgkconvolution}.
\end{proof}

Concerning the key lemma~\ref{lemma:key}, we mention the following generalization.

\begin{proposition}\label{proposition:kkeylemma}
Let $\theta \in(0,\pi)$ and $S:=S(d,\theta)$ be such that
inequality (\ref{equation:nbcsigma}) holds, with $k>0$.
Let $n$ be a positive integer.
If $(n-b-c\xi^k)f\in\EE_{S, \mu ,n-1}^{(k)}$,
then $\xi\partial_\xi f\in\EE_{S, \mu ,n}^{(k)}$ and
\begin{equation}\label{equation:kkey}\|\xi\partial_\xi
f\|_{S, \mu ,n}^{(k)}\le \frac{k(e^3+\vert\mu\vert)}\sigma\ \|(n-b-c\xi^k)f\|_{S, \mu ,n-1}^{(k)},
\end{equation}
where $\sigma$ denotes a positive constant satisfying (\ref{equation:nbcsigma}).
\end{proposition}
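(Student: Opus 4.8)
The plan is to reduce the statement to the already-established level-one Key Lemma~\ref{lemma:key} by transporting everything through the ramification map $\rho_k$. Set $g:=\rho_kf$, so that $g\in\OO(S)$ is given by $g(\zeta)=f(\zeta^{1/k})$ for $\zeta\in S=S(d,\theta)$, while $f$ and its variable $\xi$ live on $S^{(k)}=S(d/k,\theta/k)$, with $\xi=\zeta^{1/k}$, i.e. $\zeta=\xi^k$. Since the ramified norm is, by Definition~\ref{definition:kNagumo}, nothing but the ordinary norm of $\rho_kf$ on $S$, every quantity in the statement can be rewritten in terms of $g$ and the level-one norm $\|\cdot\|_{S,\mu,\bullet}$.

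First I would record how $\rho_k$ intertwines the two relevant operators. A direct chain-rule computation gives, for $\zeta\in S$,
$$
\zeta\partial_\zeta g(\zeta)=\zeta\cdot\tfrac1k\,\zeta^{1/k-1}f'(\zeta^{1/k})=\tfrac1k\,\zeta^{1/k}f'(\zeta^{1/k})=\tfrac1k\,(\xi\partial_\xi f)(\zeta^{1/k}),
$$
that is $\rho_k(\xi\partial_\xi f)=k\,\zeta\partial_\zeta(\rho_kf)$: the Euler operator picks up exactly the factor $k$ that appears in \eqref{equation:kkey}. Likewise, since $(\zeta^{1/k})^k=\zeta$, the multiplier transforms cleanly as $\rho_k\big((n-b-c\xi^k)f\big)=(n-b-c\zeta)\,g$, so the hypothesis $(n-b-c\xi^k)f\in\EE^{(k)}_{S,\mu,n-1}$ is, by Definition~\ref{definition:kNagumo}, precisely the statement $(n-b-c\zeta)g\in\EE_{S,\mu,n-1}$.

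Next I would check that the ramified inequality feeds the level-one lemma with the same constant $\sigma$. Reading \eqref{equation:nbcsigma} for the argument of $f$ and substituting $\zeta=\xi^k$ — a bijection of $S^{(k)}$ onto $S$ — the hypothesis $|n-b-c\xi^k|\ge\sigma(n+|\xi^k|)$ becomes exactly $|n-b-c\zeta|\ge\sigma(n+|\zeta|)$ for all $\zeta\in S$, which is \eqref{equation:nbcsigma} at level $k=1$ for $g$ on $S$. Thus $g$ fulfils every hypothesis of Lemma~\ref{lemma:key}, and applying it yields $\zeta\partial_\zeta g\in\EE_{S,\mu,n}$ with $\|\zeta\partial_\zeta g\|_{S,\mu,n}\le\frac{e^3+|\mu|}{\sigma}\,\|(n-b-c\zeta)g\|_{S,\mu,n-1}$. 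Transporting back through $\rho_k$ via the first intertwining identity gives $\|\xi\partial_\xi f\|^{(k)}_{S,\mu,n}=k\,\|\zeta\partial_\zeta g\|_{S,\mu,n}$ and $\|(n-b-c\zeta)g\|_{S,\mu,n-1}=\|(n-b-c\xi^k)f\|^{(k)}_{S,\mu,n-1}$, whence both the membership $\xi\partial_\xi f\in\EE^{(k)}_{S,\mu,n}$ and the bound \eqref{equation:kkey} follow. I expect no serious obstacle: the entire content is bookkeeping of the ramification, and the one point genuinely deserving care is the chain-rule identity producing the factor $k$ in the Euler operator, since this is exactly where the constant of \eqref{equation:kkey} differs from that of \eqref{equation:key}.
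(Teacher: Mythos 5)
Your proof is correct and follows essentially the same route as the paper's own proof, which likewise reduces the statement to Lemma~\ref{lemma:key} via the two intertwining identities $\rho_k\bigl[(n-b-c\xi^k)f\bigr]=(n-b-c\xi)\rho_kf$ and $\xi\partial_\xi(\rho_k f)=\frac1k\,\rho_k(\xi\partial_\xi f)$. The only difference is one of detail: the paper states these relations and stops, whereas you additionally spell out the chain rule, the transfer of \eqref{equation:nbcsigma} under $\zeta=\xi^k$, and the norm bookkeeping through Definition~\ref{definition:kNagumo}.
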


\begin{proof}
It suffices to make use of Lemma~\ref{lemma:key}, by noticing the following elementary relations:
$$
\rho_k[(n-b-c\xi^k)f]=(n-b-c\xi)\rho_kf$$
and $$
\xi\partial_\xi(\rho_k f)=\frac1k\,\rho_k(\xi\partial_\xi f)\,.
$$
\\[-5pt]
\end{proof}

We leave to the reader to translate Corollary~\ref{cor:key} into the $k$-level's context.

Finally we mention that one can combine \S~\ref{subsection:extensiondisc} with \S~\ref{subsection:extensionk} to get an extension more general as follows: letting
\begin{equation}\label{equation:extensionkR}
S^{(k)}=S^{(k)}(R;d,\theta):=D(0,R^{1/k})\cup S^{(k)}(d,\theta),
\end{equation}
one may then define, by an obvious way, the functional spaces $\EE_{S,\mu,m}^{(k)}$ for $\mu \in(0,\infty e^{-id})$ and $m\ge0$.

\section{$k$-summable functions or series with holomorphic parameters}\label{section:ksummable}

In this section, we will begin by recalling some definitions concerning the $k$-summability of a power series in the sense of Ramis \cite{Ra}, including Gevrey asymptotic expansion and $k$-Borel-Laplace transformation. From \S~\ref{subsection:ksummableparameter}, we will approach $k$-summability with holomorphic parameters, this means that the fields $\CC$ of complex number can be replaced by some suitable space of holomorphic functions.

\subsection{$k$-summable series or functions and Gevrey asymptotic expansions}\label{subsection:ksummableC}
Let $d\in[0,2k\pi)$ and let $R>0$, $\epsilon>0$. We set:
\begin{equation}\label{equation:ksummableV}
V^{(k)}(R;d,\epsilon):=\Bigl\{x\in\CC:0<\vert x\vert<R,\bigl\vert\arg x-\frac dk\bigr\vert<\frac{\pi+\epsilon}{2k}\Bigr\}\,.
\end{equation}
Mind that $V^{(k)}(R;d,\epsilon)$ presents a germ of open sector at $0$ having openness strictly larger than $\pi/k$, contrary to the sector $S^{(k)}(d,\theta)$ or $S(R;d,\theta)$ or $S^{(k)}(R;d,\theta)$, that can be viewed as germ of open sectors along whole direction $d$.

\begin{definition}\label{definition:ksummableC}
A power series $\hat f:=\sum_{n\ge 0}a_nx^n\in\CC[[x]]$ is said $k$-summable in direction $d$ and will be denoted by $\hat f\in\CC\{x\}_k^d$, if one of the following equivalent conditions is satisfied:
\begin{enumerate}
 \item\label{item:ksummableCGevrey} There exist $V=V^{(k)}(R;d,\epsilon)$ and $f\in\OO(V)$ such that $f$ admits $\hat f$ as Gevrey asymptotic expansion of order $k$ at zero in $V$, in the following sense: for any $V'=V^{(k)}(R';d,\epsilon')$ with $R'\in(0,R)$ and $\epsilon'\in(0,\epsilon)$, one can find positive constants $C=C_{V'}$, $A=A_{V'}$ such that the following relation holds for any non-negative integer $N$:
\begin{equation}\label{equation:ksummableCGevrey}
\sup_{
x\in V'}\Bigl|x^{-N}\bigl(f(x)-\sum_{n=0}^{N-1}a_nx^n\bigr)\Bigr|<C\,A^N\,\Gamma(1+\frac Nk)\,.
\end{equation}
\item\label{item:ksummableCBorel} The power series
\begin{equation}\label{equation:ksummableCBorel}
\BT_k(\hat f-\sum_{n=0}^{k-1}a_nx^n):=\sum_{n\ge k}\frac{a_n}{\Gamma(1+n/k)}\xi^{n-k}
\end{equation}
defines a germ of analytic function at $\xi=0$, saying $\phi$, that can be continued in a sector $S^{(k)}(d,\theta)$ with a growth at most exponential of the first order at infinity. In other words, $\phi\in\EE^{(k)}_{S,\mu,0}$ for some suitable $S=S(R;d,\theta)$ and $\mu\in(0,\infty e^{-id})$.
\end{enumerate}

\end{definition}
If $\bigl(V^{(k)}(R_1;d,\epsilon_1);f_1\bigr)$ and $\bigl(V^{(k)}(R_2;d,\epsilon_2);f_2\bigr)$ satisfy both condition \eqref{item:ksummableCGevrey}, then $f_1=f_2$ over the intersection domain $V^{(k)}(R_1;d,\epsilon_1)\cap V^{(k)}(R_1;d,\epsilon_1)$, that inspires the following definition for the set of $k$-sums or, saying, {\it $k$-summable functions}. We denote by $\GG^{(k)}(V)\subset\OO(V)$ the set of all functions possessing a $k$-order Gevrey asymptotic expansion over $V$ and we define $\GG_d^{(k)}$ as the inductive limit of the system $\GG^{(k)}(V)$ taking for all $V=V^{(k)}(R;d,\epsilon)$, where $R>0$ and $\epsilon>0$. Therefore, for any $\hat f\in\CC\{x\}_k^d$, there exists a unique $f\in\GG_d^{(k)}$ satisfying \eqref{equation:ksummableCGevrey}, called {\it $k$-sum of $f$ in direction $d$} and is denoted by $f=\CS_k^d\hat f$.

At the same time, we write $\EE_d^{(k)}$ as the inductive limit of $\EE^{(k)}_{S,\mu,0}$ taking over all $S=S(R;d,\theta)$ with $R>0$ and $\theta>0$. The equivalence between \eqref{item:ksummableCGevrey} and \eqref{item:ksummableCBorel} can be then realized by the $k$-Borel-Laplace transform in direction $d$ (we consider only the case $a_0=a=1=...=a_{k-1}=0$):
$$
\GG^{(k)}_d\ni f\mapsto {\mathcal B}_k^df\in\EE^{(k)}_d,\quad
\EE^{(k)}_d\ni \phi\mapsto \LT_k^d\phi\in\GG_d^{(k)},
$$
where
\begin{equation}\label{equation:ksummableCBL}
{\mathcal B}_k^d={\rho_k}^{-1}\circ{\mathcal B}^d\circ\rho_k,\quad
\LT_k^d={\rho_k}^{-1}\circ\LT^d\circ\rho_k\,.
\end{equation}

\subsection{$k$-summable series or functions with holomorphic parameters}\label{subsection:ksummableparameter}
The following definition is very close to Definition 5.1.1 given in \cite[Chapitre I]{MR1}; see also \cite[\S 2.3]{To} and, for a Banach space version of $k$-summability, see \cite[Chapiter 6]{Ba}.

\begin{definition}\label{definition:ksummable}
Let $U$ be an open set of $\CC^m$, with $m\ge 1$ and let $V$ be an open sector of vertex $0$ in $x$-plane. A function $f\in\OO(U\times V)$ is said $k$-summable w.r.t. $x$ in a direction $d$ and will be denoted by $f\in\GG_d^{(k)}(\OO(U))$, if the following conditions are fulfilled:
\begin{itemize}
 \item The function $f$ can be analytically continued over $U\times V^{(k)}(R;d,\epsilon)$, for some $R>0$ and $\epsilon\in(0,\pi)$.
\item There exists a sequence $(f_n)_{n\ge 0}$ in $\OO(U)$ such that, for all relatively compact subset $U'\subset U$ and every sub-sector $V'=V^{(k)}(R';d,\epsilon')\subset V^{(k)}(R;d,\epsilon)$, one can find positive constants $C=C_{U',V'}$ and $A=A_{U',V'}$ with the following property: $\forall\,N\in\NN$,
\begin{equation}\label{equation:ksummablewithparameter}
\sup_{(z,x)\in U'\times V'}\Bigl|x^{-N}\bigl(f(z,x)-\sum_{n=0}^{N-1}f_n(z)x^n\bigr)\Bigr|<C\,A^N\,\Gamma(1+\frac Nk)\,.
\end{equation}
\end{itemize}
\end{definition}

By interpreting $\OO(U\times V)$ as being the set of analytic functions defined from $V$ to the Fr\'echet space $\OO(U)$ with the uniform norms on compacts, the above definition says that every $f\in\GG_d^{(k)}(\OO(U))$ is merely $k$-summable in direction $d$ as a function of one variable with values in $\OO(U)$. For any given $z_0\in\CC^m$, we define $\GG_d^{(k)}(\OO_{z_0})$ the set obtained by taking the inductive limit of $\GG_d^{(k)}(\OO(U))$ over all open neighborhood $U$ of $z_0$ in $\CC^m$.

The power series $\sum_{n\ge 0}f_n(z)x^n\in\OO(U)[[x]]$ satisfying \eqref{equation:ksummablewithparameter} may be called {\it $k$-order Gevrey asymptotic expansion of $f$ at $0$ in $V$ with holomorphic parameter in $U$} and it will be denoted by $\CT_x(f)$ or, in short, by $\CT(f)$ if no confusion is obvious. One can notice that if $f$ is $k$-summable in direction $d$, then it is also true for any direction sufficiently close to $d$ and that the expansion $\CT(f)$ does not depend on the choice of the direction.

On the other hand, the space $\EE_d^{(k)}$ can extend to the holomorphic parameters case as follows:  $\phi\in \EE_d^{(k)}(\OO_{z_0})$ if there exist an open neighborhood $U$ of $z_0$ in $\CC^m$, a disc plus sector $S=S(R;d,\theta)$ and $\mu\in(0,\infty e^{-id})$ such that $\phi(z,\cdot)\in \EE^{(k)}_{S,\mu,0}$, uniformly for all compact $K\subset U$:
\begin{equation}\label{equation:ksummableEE}
 \sup_{z\in K}\Vert \phi(z,\cdot)\Vert_{S,\mu,0}^{(k)}<\infty\,.
\end{equation}

We will consider only the case where $U$ is a neighborhood of $0\in\CC^m$, so that we may write $\OO_0\cong\CC\{z\}$.
\begin{definition}\label{definition:ksummableseries}
A power series
$$\hat f(z,x):=\sum_{({\bl},n)\in\NN^m\times\NN}a_{{\bf \ell},n}z^\bl x^n$$
is said $k$-summable w.r.t. $x$ in direction $d$ with holomorphic parameters at $0$ in $\CC^m$ and will be denoted by $\hat f\in \OO_0\{x\}_k^d$, if the following conditions are fulfilled:
\begin{itemize}
 \item For all $n\in\NN$, the series $\sum_{\bl\in\NN^m}a_{\bl,n}z^\bl$ defines a germ of analytic function at $0\in\NN^m$ that will be denoted by $f_n(z)$.
\item There exists $f\in \GG_d^{(k)}(\OO_0)$ such that $\CT(f)=\sum_{n\ge 0}f_n(z)x^n$.
\end{itemize}
\end{definition}

It is important to notice that in the above definition, the function $f\in \GG_d^{(k)}(\OO_0)$ is {\it unique}: it may be called {\it $k$-sum of $\hat f$ in direction $d$} and will be denoted by $f=\CS_k^d\hat f$.

In the same line as in the case of $k$-summable series with constant coefficients, we can establish the following result.

\begin{theorem}\label{theorem:ksummable}
Let $U$ be an open neighborhood of $0$ in $\CC^m$, $V=V^{(k)}(R;d,\epsilon)$ with $R>0$ and $\epsilon>0$ and let $f\in\OO(U\times V)$. We suppose that the following relation holds for all $z\in U$ and $j=0$, ..., $k-1$:
$$\lim_{V\ni x\to0}\partial_x^jf(z,x)=0\,.$$  Then the following conditions are equivalent.
\begin{enumerate}
\item We have $f\in \GG_d^{(k)}(\OO_0)$.
\item\label{item:ksummablethmphi} There exists a function $\phi\in\EE_d^{(k)}(\OO_0)$ such that $f$ can be expressed as $k$-Laplace transform of $\phi$, {\it i.e}:
    $$
    f(z,x)=(\rho_k^{-1}\circ \LT^d\circ\rho_k\phi)(z,x).
    $$
    \item\label{item:ksummablethmderivative} For all relatively compact subset $U'\subset U$ and every relatively compact sub-sector $V'=V^{(k)}(R';d,\epsilon')\subset V^{(k)}_d(R,\epsilon)$, there exist positive constants $C=C_{U',V'}$ and $A=A_{U',V'}$ such that the following relation holds for all non-negative integer $n$:
        $$
        \sup_{(z,x)\in U'\times V'}\Bigl\vert\frac{\partial_x^n f(z,x)}{n!}\Bigr\vert<CA^n\Gamma(1+\frac nk)\,.
        $$
\end{enumerate}
\end{theorem}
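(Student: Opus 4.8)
The plan is to prove Theorem~\ref{theorem:ksummable} by establishing the cycle of implications $(1)\Rightarrow(3)$, $(3)\Rightarrow(2)$, and $(2)\Rightarrow(1)$, since this is the natural way to handle an equivalence of three conditions where each step uses a different circle of ideas. The overall strategy is to reduce everything to the one-variable, constant-coefficient theory recalled in \S\ref{subsection:ksummableC} by treating $f$ as an analytic function of the single variable $x$ valued in the Fr\'echet space $\OO(U)$, exactly as suggested by the discussion following Definition~\ref{definition:ksummable}. The uniform-on-compacts estimates \eqref{equation:ksummablewithparameter} and \eqref{equation:ksummableEE} are precisely what allow the scalar arguments to run verbatim once a relatively compact $U'\subset U$ is fixed; the holomorphic dependence on $z$ comes along for free because uniform limits of holomorphic functions are holomorphic.

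For $(1)\Rightarrow(3)$, I would fix $U'\subset U$ and $V'\subset V$ and start from the Gevrey estimates \eqref{equation:ksummablewithparameter}. The derivative bound on $\partial_x^n f/n!$ is a standard consequence of Gevrey asymptotics obtained by applying the Cauchy integral formula on a small circle contained in a slightly larger subsector $V''$ with $V'\Subset V''\Subset V$, controlling the remainder of the asymptotic series by \eqref{equation:ksummablewithparameter} applied in $V''$. The vanishing hypotheses $\lim_{x\to0}\partial_x^j f=0$ for $j=0,\dots,k-1$ guarantee that the first $k$ Taylor coefficients vanish, which is what makes the $k$-Borel transform \eqref{equation:ksummableCBorel} well defined with no negative powers of $\xi$. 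For $(3)\Rightarrow(2)$, I would build $\phi$ as the $k$-Borel transform of $f$ in direction $d$, namely $\phi=\rho_k^{-1}\circ\mathcal B^d\circ\rho_k f$ in the notation of \eqref{equation:ksummableCBL}; the derivative estimates of condition (3) translate, via the factorials $\Gamma(1+n/k)$, into the exponential growth of order one for $\rho_k\phi$, so that $\phi(z,\cdot)\in\EE^{(k)}_{S,\mu,0}$ uniformly on compacts, i.e. $\phi\in\EE_d^{(k)}(\OO_0)$ in the sense of \eqref{equation:ksummableEE}. Then $f$ is recovered as the $k$-Laplace transform of $\phi$ by the scalar inversion formula, applied with values in $\OO(U)$.

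For $(2)\Rightarrow(1)$, starting from $\phi\in\EE_d^{(k)}(\OO_0)$, I would form $f=\rho_k^{-1}\circ\LT^d\circ\rho_k\phi$ and verify directly that $f\in\GG_d^{(k)}(\OO_0)$ by producing the asymptotic coefficients $f_n(z)$ from the Taylor expansion of $\phi$ at $\xi=0$ and estimating the Laplace-integral remainder. Because $\phi(z,\cdot)$ lies in a single Banach space $\EE^{(k)}_{S,\mu,0}$ uniformly for $z$ in a compact set, the classical estimate on the $k$-Laplace transform gives the Gevrey bounds \eqref{equation:ksummablewithparameter} with constants uniform on $U'$, and holomorphy in $z$ follows since the Laplace integral depends holomorphically on the parameter.

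The main obstacle I anticipate is not any single estimate but the bookkeeping needed to keep all constants \emph{uniform in the parameter $z$} over relatively compact subsets while simultaneously handling the ramification $\rho_k$ that turns the $k$-summability problem into a level-one problem in the variable $\xi=\rho_k^{-1}x$. One must check carefully that the $k$-Borel and $k$-Laplace transforms of \eqref{equation:ksummableCBL}, which are defined pointwise in $z$, genuinely produce functions holomorphic in $z$ with uniform-on-compact control, and that passing through $\rho_k$ does not disturb the Nagumo-norm estimates; here the inductive-limit definitions of $\GG_d^{(k)}(\OO_0)$ and $\EE_d^{(k)}(\OO_0)$, together with the exponential-Nagumo spaces $\EE^{(k)}_{S,\mu,0}$ of Definition~\ref{definition:kNagumo}, are exactly the technical apparatus that makes this uniformity automatic, so the argument reduces to invoking the scalar equivalence of Definition~\ref{definition:ksummableC} fibrewise and checking that the fibrewise constants can be chosen uniformly.
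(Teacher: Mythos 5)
Your proposal is correct and follows essentially the same route as the paper: the paper's own proof is precisely the reduction to the classical scalar theory of Ramis (and its parametric version in Martinet--Ramis and Tougeron), treating $f$ as an $\OO(U)$-valued function of $x$ so that the uniform-on-compacts estimates in \eqref{equation:ksummablewithparameter} and \eqref{equation:ksummableEE} let the one-variable arguments run fibrewise, with $\phi$ obtained as the $k$-Borel transform (the paper takes the $k$-Borel transform of the expansion $\CT_x(f)$, while you take that of $f$ itself, an immaterial variation). The paper states this only as an ``evident adaptation'' with references, so your fleshed-out cycle $(1)\Rightarrow(3)\Rightarrow(2)\Rightarrow(1)$ is simply a more explicit rendering of the same argument.
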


\begin{proof}
The proof can be done by an evident adaptation, noticing that in \eqref{item:ksummablethmphi}, $\phi$ may be obtained as the $k$-Borel transform of $\CT_x(f)$. See \cite{To,MR1} or, for the classical case of $\CC$ instead of $\OO(U)$, see \cite{Ra}.
\end{proof}

An immediate consequence is the following.

\begin{proposition}\label{proposition:ksummablederivative}\begin{enumerate}
\item \label{item:ksummablederivative1}The set $\GG_d^{(k)}(\OO_0)$ constitutes a differential algebra with respect to the usual product of functions and differential operators in $(z,x)$  and, moreover, if $f\in\GG_d^{(k)}(\OO_0)$ with $\hat f=\CT(f)$, then the following relation holds for all ${\bf \ell}\in\NN^m$ and all $n\in\NN$:
\begin{equation}\label{equation:ksummablederivative}
\CT(\partial^{\bf \ell}_z\partial_x^nf)=\partial_z^{\bf \ell}\partial_x^n\hat f.
\end{equation}
\item \label{item:summablederivative2} The set $\EE_d^{(k)}(\OO_{0})$ constitutes a differential algebra with respect to the $k$-convolution product relative to $\xi$, differential operators on $z$ and the derivative expressed by taking the product by $\xi$. Moreover, if $\phi$, $\psi\in \EE_d^{(k)}(\OO_{0})$, then the following relation holds for any $\bl\in\NN^m$:
\begin{equation}\label{equation:ksummableconvolution}
\partial_z^\bl (\phi*_{k}\psi)=\sum_{\bl_1+\bl_2=\bl}\binom{\bl_1}{\bl}\,(\partial_z^{\bl_1}\phi)*_k(\partial_z^{\bl_2}\psi).
\end{equation}

                                                          \end{enumerate}

\end{proposition}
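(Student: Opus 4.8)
The plan is to prove Proposition~\ref{proposition:ksummablederivative} by transferring the two assertions to the level of $k$-summable functions in one variable with values in the Fr\'echet space $\OO(U)$, where the analogous statements are classical. For assertion~\eqref{item:ksummablederivative1}, the key observation is Theorem~\ref{theorem:ksummable}, condition~\eqref{item:ksummablethmderivative}: a function $f\in\GG_d^{(k)}(\OO_0)$ is characterized by uniform Gevrey-$(1/k)$ bounds on $\partial_x^n f/n!$ over relatively compact $U'\times V'$. First I would check that these bounds are stable under the algebra operations. For the product, I would apply the Leibniz rule $\partial_x^n(fg)=\sum_{j=0}^n\binom nj\partial_x^jf\,\partial_x^{n-j}g$ and use the sub-multiplicativity of the Gamma-factors $\Gamma(1+j/k)\Gamma(1+(n-j)/k)\le\Gamma(1+n/k)$ together with $\sum_j\binom nj A^jB^{n-j}=(A+B)^n$ to absorb the binomial sum into a new constant $A'$. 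For the $z$-derivatives I would invoke the Cauchy estimates in the parameter $z$: shrinking $U'$ slightly inside $U$ and applying the Cauchy integral formula in $z$ converts the uniform bound on a larger compact into a bound on $\partial_z^{\bl}f$ with the same Gevrey character, since differentiation in $z$ commutes with differentiation in $x$ and costs only a constant factor depending on $|\bl|$ and the distance to $\partial U$. The compatibility relation~\eqref{equation:ksummablederivative} then follows because the asymptotic expansion $\CT$ is obtained by reading off the Taylor coefficients in $x$, and both $\partial_z^{\bl}$ and $\partial_x^n$ act term-by-term on these coefficients.

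For assertion~\eqref{item:summablederivative2}, I would work on the Borel side. By definition $\phi\in\EE_d^{(k)}(\OO_0)$ means $\phi(z,\cdot)$ lies in $\EE_{S,\mu,0}^{(k)}$ uniformly on compacts $K\subset U$, in the sense of \eqref{equation:ksummableEE}. The stability under the $k$-convolution product $*_k$ is exactly Proposition~\ref{proposition:kconvolution}, Assertion~\eqref{assertion:kfgmunn'} (taking $n=n'=0$), applied pointwise in $z$: the bound $\Vert\phi(z,\cdot)*_k\psi(z,\cdot)\Vert_{S,\mu,0}^{(k)}\le\Vert\phi(z,\cdot)\Vert_{S,\mu,0}^{(k)}\Vert\psi(z,\cdot)\Vert_{S,\mu,0}^{(k)}$ is uniform in $z\in K$ since each factor is. That the derivative in the variable $\xi$ is realized by multiplication by $\xi$ reflects the correspondence under $k$-Borel transform between $\partial_x$ on the Gevrey side and multiplication by $\xi^k$ (up to the ramification $\rho_k$) on the Borel side, which is the content of the Borel-Laplace dictionary recalled in \S\ref{subsection:ksummableC}.

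The Leibniz formula~\eqref{equation:ksummableconvolution} for $z$-derivatives of a $k$-convolution is the heart of the second assertion. The plan is to differentiate under the convolution integral. Writing $\rho_k(\phi*_k\psi)=(\rho_k\phi)*(\rho_k\psi)$ as in \eqref{equation:fgkconvolution}, the ordinary convolution $(\rho_k\phi)*(\rho_k\psi)(z,\xi)=\int_0^\xi\rho_k\phi(z,\tau)\,\rho_k\psi(z,\xi-\tau)\,d\tau$ depends on $z$ only through the two integrand factors, and $\partial_z^{\bl_1}$, $\partial_z^{\bl_2}$ commute with the $\tau$-integration; applying the classical multivariate Leibniz rule to the product inside the integral and linearity of $*$ then yields the stated identity. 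The main subtlety I anticipate is justifying the interchange of $\partial_z^{\bl}$ with the convolution integral: this requires that the partial sums of the differentiated integrands converge uniformly on $K\times S$, which follows from the uniform Nagumo bounds \eqref{equation:ksummableEE} combined with Cauchy estimates in $z$ to control each $\partial_z^{\bl_i}\phi(z,\cdot)$, $\partial_z^{\bl_i}\psi(z,\cdot)$ in the appropriate $\EE_{S,\mu,0}^{(k)}$ norm on a slightly smaller compact. Once this interchange is legitimate, the binomial identity is purely formal, and the fact that each $\partial_z^{\bl_1}\phi$ and $\partial_z^{\bl_2}\psi$ remains in $\EE_d^{(k)}(\OO_0)$ guarantees the right-hand side is again a sum of admissible $k$-convolutions.
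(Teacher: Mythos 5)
Your proposal is correct and takes essentially the same approach as the paper's own (very terse) proof, which likewise rests on the Cauchy formula in the parameter $z$ together with the derivative-bound criterion \eqref{item:ksummablethmderivative} of Theorem~\ref{theorem:ksummable}, and then treats \eqref{equation:ksummableconvolution} as an analogous computation. The extra details you supply --- the Leibniz rule combined with the bound $\Gamma(1+j/k)\,\Gamma(1+(n-j)/k)\le\Gamma(1+n/k)$ for closure under products, Cauchy estimates in $z$ for the uniform Nagumo bounds, and the justification of differentiating under the convolution integral --- are exactly the steps the paper omits.
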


\begin{proof}
We use of the Cauchy formula of \cite{HO} for expressing each derivative function $\partial^{\bf \ell}_z\partial_x^nf$. Then, by taking into account \eqref{item:ksummablethmderivative} of Theorem~\ref{theorem:ksummable}, we can therefore get the formula \eqref{equation:ksummablederivative}. It is similar to prove \eqref{equation:ksummableconvolution}, we omit the details here.
\end{proof}

\subsection{Taylor expansion with $k$-summable coefficients}\label{subsection:ksummableTaylor}

An element of $\GG_d^{(k)}(\OO_{0})$ can also be considered as a holomorphic function defined from a neighborhood of $0\in\CC^m$ to the Fr\'echet space $\GG_d^{(k)}$ for which the uniform norms on relatively compact sectors may be considered. The same remark remains true in the case of $\EE_d^{(k)}(\OO_0)$. So we can establish the following result.

\begin{theorem}\label{theorem:ksummableexpansion}
Let $d\in\SS^1$, $m\in\NN^*$, $k\in\NN^*$ and consider
$$\hat f:=\sum_{(\bl,n)\in\NN^m\times\NN}a_{\bl,n}z^\bl x^{n+k}\in x^k\CC[[z,x]]\,.
$$
For all $\bl\in\NN^m$, let
$$
\hat f_\bl:=\sum_{n\ge 0}a_{\bl,n}x^{n+k},\quad
\phi_\bl:=\BT_k\hat f_\bl\,.
$$
Then $\hat f\in\OO_0\{x\}_k^d$ if, and only if, one of the following equivalent conditions is satisfied:
\begin{enumerate}
 \item \label{item:ksummableexpansion1}For all $\bl\in\NN^m$,  $\hat f_\bl\in\CC\{x\}_k^d$, and  the power series
$
\sum_{\bl\in\NN^m}\CS_k^d(\hat f_\bl)z^\bl
$
is Taylor expansion of some function $f\in\GG_d^{(k)}(\OO_0)$ at $z=0\in\CC^m$. In other words, it follows that, in $\GG_d^{(k)}(\OO_0)$:
$$
\CS_k^d\hat f=\sum_{\bl\in\NN^m}\CS_k^d(\hat f_\bl)z^\bl\,.
$$
\item \label{item:ksummableexpansion2} The power series
$
\sum_{\bl\in\NN^m}\phi_\bl\, z^\bl
$
is Taylor expansion of some function $\phi\in\EE_d^{(k)}(\OO_0)$ at $z=0\in\CC^m$.\\
\item\label{item:ksummableexpansion3}There exist $R>0$, $\theta>0$, $\mu\in(0,\infty e^{-id})$, $\nu>0$ such that if $S=S(R;d,\theta)$, the power series $ \sum_{\bl\in\NN^m}\Vert\phi_\bl\Vert^{(k)}_{S,\mu,\nu|\bl|}\,z^\bl$ is Taylor expansion of some function $\phi\in\OO_0$, where $|\bl|=\ell_1+...+\ell_m$ for $\bl=(\ell_1,\cdots,\ell_m)\in\NN^m$.
\end{enumerate}
\end{theorem}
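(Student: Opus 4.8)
The plan is to prove the equivalence of the three conditions for $\hat f\in\OO_0\{x\}_k^d$ by establishing the cycle $(3)\Rightarrow(2)\Rightarrow(1)\Rightarrow(3)$, relying throughout on the dictionary established in Section~\ref{section:ksummable} between $k$-summability and membership in the Nagumo-Banach spaces $\EE^{(k)}_{S,\mu,n}$. The heart of the matter is the reformulation of the statement ``$\hat f\in\OO_0\{x\}_k^d$'' in terms of the Borel transforms $\phi_\bl=\BT_k\hat f_\bl$. By Theorem~\ref{theorem:ksummableexpansion} itself (via Definitions~\ref{definition:ksummable} and~\ref{definition:ksummableseries}), $\hat f\in\OO_0\{x\}_k^d$ exactly when there is a $\phi\in\EE^{(k)}_d(\OO_0)$ whose $z$-Taylor coefficients are the $\phi_\bl$; this is precisely condition~\eqref{item:ksummableexpansion2}. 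So the real content is to show that the analytic condition~\eqref{item:ksummableexpansion2} on $\phi(z,\xi)$ as a function of two variables is equivalent to the purely quantitative condition~\eqref{item:ksummableexpansion3} on the growth of the sequence of Nagumo norms $\Vert\phi_\bl\Vert^{(k)}_{S,\mu,\nu|\bl|}$, and that both are equivalent to the $k$-summable reassembly described in~\eqref{item:ksummableexpansion1}.

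First I would handle $(2)\Leftrightarrow(3)$, which I expect to be the main obstacle and the genuinely new estimate. Assuming~\eqref{item:ksummableexpansion2}, one has $\phi\in\EE^{(k)}_d(\OO_0)$, so by \eqref{equation:ksummableEE} there are $S=S(R;d,\theta)$, $\mu$, and a compact polydisc $K=\{|z|\le r\}$ with $\sup_{z\in K}\Vert\phi(z,\cdot)\Vert^{(k)}_{S,\mu,0}<\infty$. The coefficient $\phi_\bl$ is recovered by the Cauchy formula $\phi_\bl(\xi)=\frac{1}{(2\pi i)^m}\oint\cdots\oint \phi(z,\xi)\,z^{-\bl-\mathbf 1}\,dz$, and the plan is to push this integral inside the Nagumo norm. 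Because the norm $\Vert\cdot\Vert^{(k)}_{S,\mu,0}$ is built from a sup over $\xi$, the integral-Minkowski inequality gives $\Vert\phi_\bl\Vert^{(k)}_{S,\mu,0}\le C\,r^{-|\bl|}$. To upgrade the norm index $0$ to $\nu|\bl|$ I would slightly shrink the sector (replacing $\theta$ by a smaller $\theta'$ so that $\delta(\xi)$ stays bounded below on the retained region) or equally slightly shrink $R$; since $\delta(\xi)\le1$, the factor $\delta(\xi)^{\nu|\bl|}$ can only decrease the sup, so choosing $\nu$ small and absorbing the resulting geometric loss $(R'/R)^{|\bl|}$ into the radius of convergence shows $\sum_\bl\Vert\phi_\bl\Vert^{(k)}_{S,\mu,\nu|\bl|}z^\bl$ converges near $0$, giving~\eqref{item:ksummableexpansion3}. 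Conversely, from~\eqref{item:ksummableexpansion3} the bound $\sum_\bl\Vert\phi_\bl\Vert^{(k)}_{S,\mu,\nu|\bl|}|z|^{|\bl|}<\infty$ for $|z|<\rho$ lets me define $\phi(z,\xi):=\sum_\bl\phi_\bl(\xi)z^\bl$; uniform convergence of the norms yields that $\phi(z,\cdot)\in\EE^{(k)}_{S,\mu,0}$ uniformly on compacts (using $\delta(\xi)^{\nu|\bl|}\le1$ to dominate each term by its Nagumo norm), hence $\phi\in\EE^{(k)}_d(\OO_0)$, which is~\eqref{item:ksummableexpansion2}.

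Next I would close the loop with $(1)\Leftrightarrow(2)$, which is essentially the parametrized $k$-Borel-Laplace correspondence already packaged in Section~\ref{section:ksummable}. Assuming~\eqref{item:ksummableexpansion2}, set $f:=\LT_k^d\phi=\rho_k^{-1}\circ\LT^d\circ\rho_k\,\phi$, the $k$-Laplace transform in direction $d$ applied fibrewise in $z$. By Theorem~\ref{theorem:ksummable}, since $\phi\in\EE^{(k)}_d(\OO_0)$ we get $f\in\GG^{(k)}_d(\OO_0)$ with $\CT_x(f)=\sum_{n}f_n(z)x^n$ where $f_n(z)=\sum_\bl a_{\bl,n}z^\bl$; evaluating the $z$-Taylor coefficients commutes with the one-variable $k$-Borel-Laplace pair, so the slice $f(\cdot$ at fixed $z$-coefficient$)$ is exactly $\LT_k^d\phi_\bl=\CS_k^d\hat f_\bl$. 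This gives each $\hat f_\bl\in\CC\{x\}_k^d$ and the Taylor identity $\CS_k^d\hat f=\sum_\bl\CS_k^d(\hat f_\bl)z^\bl$ of~\eqref{item:ksummableexpansion1}. The reverse implication $(1)\Rightarrow(2)$ runs backwards along the same correspondence: from $f\in\GG^{(k)}_d(\OO_0)$ one takes $\phi=\BT_k^d f\in\EE^{(k)}_d(\OO_0)$ and reads off that its $z$-Taylor coefficients are the $\BT_k\hat f_\bl=\phi_\bl$.

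The only subtle point beyond bookkeeping is the interchange of the (infinite-dimensional) $z$-Taylor expansion with the one-variable summation operators $\CS_k^d$, $\LT_k^d$, $\BT_k$; I would justify this exactly as in~\eqref{item:ksummablederivative1} of Proposition~\ref{proposition:ksummablederivative}, viewing an element of $\GG^{(k)}_d(\OO_0)$ as a holomorphic $\GG^{(k)}_d$-valued function of $z$ and using the Cauchy integral formula of \cite{HO} to extract coefficients, the uniform-on-compacts nature of the estimates guaranteeing that the asymptotic and summation properties survive coefficient extraction. I expect that Step two, the quantitative $(2)\Leftrightarrow(3)$ with its careful tracking of the index shift from $0$ to $\nu|\bl|$ via the bound $\delta(\xi)\le1$ together with the Cauchy-estimate loss $r^{-|\bl|}$, is where the real work lies; the remaining equivalences are formal transcriptions of the parametrized Borel-Laplace theory developed earlier in this section.
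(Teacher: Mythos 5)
Your plan has the right overall architecture --- a cycle of implications, Cauchy estimates in the parameter $z$, and the fibrewise Borel--Laplace dictionary of Theorem~\ref{theorem:ksummable} --- and in these respects it coincides with the paper's own proof. But the step you yourself single out as ``where the real work lies'', namely $(3)\Rightarrow(2)$, fails as written: you use the inequality $\delta(\xi)^{\nu|\bl|}\le 1$ in the direction in which it gives nothing. Condition~\eqref{item:ksummableexpansion3} bounds $\Vert\phi_\bl\Vert^{(k)}_{S,\mu,\nu|\bl|}\le C\rho^{-|\bl|}$, and by Definition~\ref{definition:Nagumo} this yields only the pointwise estimate
\begin{equation*}
\bigl|\rho_k\phi_\bl(\xi)\bigr|\le \frac{C\rho^{-|\bl|}}{M_0}\,\frac{\bigl|e^{\mu\xi}\bigr|}{1+|\xi|^{2}}\,\delta(\xi)^{-\nu|\bl|},\qquad \xi\in S,
\end{equation*}
in which the factor $\delta(\xi)^{-\nu|\bl|}\ge 1$ blows up as $\xi$ approaches the boundary of $S$, at a rate that worsens as $|\bl|$ grows. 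Hence for a fixed $z$ the series $\sum_{\bl}\phi_\bl(\xi)z^\bl$ is only guaranteed to converge on the set where $|z|<\rho\,\delta(\xi)^{\nu}$, and you cannot conclude $\phi(z,\cdot)\in\EE^{(k)}_{S,\mu,0}$ on the whole sector $S$: by \eqref{equation:2norms} the space $\EE_{S,\mu,\nu|\bl|}$ is larger than $\EE_{S,\mu,0}$ and its norm smaller, so finiteness of the higher-index norms does not dominate the index-$0$ norm --- the entire purpose of the index $n$ is precisely to permit growth like $\delta(\xi)^{-n}$ near $\partial S$.

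The missing idea is the sector shrinking, which, ironically, you invoke in the direction where it is not needed. The paper's proof of this implication replaces $S=S(R;d,\theta)$ by a smaller sector $S'=S(R';d,\theta')$; for $\xi\in S'$ one has $\delta(\xi,S)\ge c_0>0$ with $c_0$ depending only on $\theta-\theta'$ and $\ln(R/R')$, whence
$\Vert\phi_\bl\Vert^{(k)}_{S',\mu,0}\le c_0^{-\nu|\bl|}\,\Vert\phi_\bl\Vert^{(k)}_{S,\mu,\nu|\bl|}\le C\,(c_0^{\nu}\rho)^{-|\bl|}$,
and the geometric loss $c_0^{-\nu|\bl|}$ is absorbed into the radius of convergence in $z$. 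Since $\EE^{(k)}_d(\OO_0)$ is an inductive limit over all such sectors, this still produces condition~\eqref{item:ksummableexpansion2}, and applying the $k$-Laplace transform then yields the $k$-sum of $\hat f$, which is how the paper closes its cycle. In the opposite direction $(2)\Rightarrow(3)$ no shrinking is required at all: $\delta\le1$ gives $\Vert\phi_\bl\Vert^{(k)}_{S,\mu,\nu|\bl|}\le\Vert\phi_\bl\Vert^{(k)}_{S,\mu,0}$ immediately, which together with your Cauchy estimate in $z$ is exactly the paper's one-line argument. A smaller defect: invoking ``Theorem~\ref{theorem:ksummableexpansion} itself'' at the outset to identify $\hat f\in\OO_0\{x\}_k^d$ with condition~\eqref{item:ksummableexpansion2} is circular; that identification is part of what must be proved, via Theorem~\ref{theorem:ksummable} together with the coefficient-extraction argument you only sketch later.
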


\begin{proof}
If $\hat f\in\OO_0\{x\}_k^d$ and $\hat f=\CT f$, one can express $\partial^\bl_zf(0)$ by Cauchy formula; in view of \eqref{equation:ksummablewithparameter}, we obtain that $\hat f_\bl\in\CC\{x\}^d_k$ and, furthermore, $f_\bl=\CS_k^d\hat f_\bl$, which implies the above condition \eqref{item:ksummableexpansion1}.

One can get the second condition from the first one by making use of formal $k$-Borel transform w.r.t. $x$ in the Taylor expansion of $\CS_k^d\hat f$.

Condition \eqref{item:ksummableexpansion3} can be deduced from \eqref{item:ksummableexpansion2} by merely noticing the fact that, for any $m\ge0$,
$\Vert \phi_\bl\Vert_{S,\mu,m}\le\Vert\phi_\bl\Vert_{S,\mu,0}$.

By assuming condition \eqref{item:ksummableexpansion3} and by replacing $S$ by a more smaller sector $S'=S(R';d,\theta')$, one may suppose that $ \sum_{\bl\in\NN^m}\Vert\phi_\bl\Vert^{(k)}_{S',\mu,0}\,z^\bl\in\CC\{z\}$. Therefore, applying $k$-Laplace transform yields the $k$-sum of $\hat f$, which ends the proof of Theorem~\ref{theorem:ksummableexpansion}.
\end{proof}

\begin{remark}[{\bf Convention for notations}]\label{remark:ksummablenotation}
In Part~\ref{part:summability}, instead of $z$ we will write $t$, so that the set $\OO_0$ will be merely $\CC\{t\}$. When $k=1$, we will remove the index $(k)$ or $k$ from all spaces considered above, e.g., we will write $\OO_0\{x\}^d$, $\GG_d(\OO_0)$, $\EE_d(\OO_0)$ instead of $\OO_0\{x\}^d_1$, $\GG_d^{(1)}(\OO_0)$, $\EE_d^{(1)}(\OO_0)$, respectively.
\end{remark}

\bigskip

\bigskip

\part{Summability of formal solutions of singular partial differential equations}\label{part:summability}

Let us consider the Cauchy problem \eqref{equation:1}, that is introduced in the beginning of the paper as follows:
$$\leqno(\ref{equation:1})\begin{array}{c}
t\partial_t u=a(x)t+b(x)u+x^{k+1} c(x)\partial_xu\\
[0.3cm]
\qquad\qquad\qquad\qquad+\sum\limits_{i+j+\alpha\ge
2}a_{i,j,\alpha}(x)t^iu^j(\partial_xu)^\alpha,\quad u(0,x)=0,\nonumber
\end{array}
$$
where we suppose that $a(x)$, $b(x)$, $c(x)$, $a_{i,j,\alpha}(x)$ are  holomorphic  at $x=0\in\CC$ and that $c(0)\not=0$ and $k\ge 1$.

In this part, we shall use the results of Part 1 to study the problem \eqref{equation:1}. In Section~\ref{section:conditionF}, it will be shown that for any equation \eqref{equation:1} 
with the condition $(F)$ can be regarded to have such form that the term $\partial_xu$ appears 
always as $x\partial_xu$; see the equation \eqref{equation:vu} below. This preparative form will be used in Sections ~\ref{section:proofk=1} and~\ref{section:proofgeneralcase}, for the proof of Theorem~\ref{theorem:yes}.

In Section~\ref{section:withoutF}, the condition $(F)$ will be not necessarily satisfied and some transformations will be undertook to be able to apply Theorem~\ref{theorem:yes} or its generalization Theorem~\ref{theorem:yeskcoeff}. In particular, such transformation can be chosen to be analytic for semilinear cases; see Theorem~\ref{theorem:withoutFlinear}. Section~\ref{section:proofno} is devoted to a particular study of nonlinear equation \eqref{equation:no} in which the condition $(F)$ will be not satisfied.

\section{Analytical equivalence under condition $(F)$}\label{section:conditionF}

Remember that condition $(F)$ requires the following property:
$$
b(0)\notin \NN^*=\{1,2,3,...\}\quad\hbox{and}\quad a_{i,j,\alpha}(0)=0,\ \forall\ \alpha>0.\leqno(F)
$$
If $b(0)\in\NN^*$, this is often called {\it  resonance case}, and the equation \eqref{equation:1} may have no power series solution. So, {\bf we will always assume that $b(0)\notin\NN^*$.} In this case, it is easy to check that the equation \eqref{equation:1} admits a unique power series solution that one can put in the following form:
\begin{equation}\label{equation:utx}
\hat u(t,x):=u_{0}(t)+u_{1}(t)x+u_{2}(t)x^2+\cdots\,,
\end{equation}
where, according to \cite[Corollary 2.2]{CLT}, the coefficients functions $u_{n}$, $n=0$, $1$, $2$, $\cdots $, are all analytic in some open disc centered at $t=0$ in $t$-plane. Since $\hat u(0,x)=0$, it follows that $u_n(0)=0$ for all $n\ge 0$

\begin{proposition}\label{proposition:conditionF}
Consider the Cauchy problem \eqref{equation:1}, with $k\ge 1$. If the condition $(F)$ is satisfied, then there exists a function $v(t,x)$ holomorphic at $(0,0)\in\CC^2$ such that if the solution $u$ is replaced  by $v+xu$, the equation \eqref{equation:1} can be rewritten as following form:
\begin{equation}
\begin{array}{c}\label{equation:vu}
t\partial_t u=\tilde a(x)t+\tilde b(x)u+\tilde c(x)x^{k+1}\partial_xu\\
[0.3cm]
\qquad\qquad\qquad\qquad+\sum\limits_{i+j+\alpha\ge
2}\tilde a_{i,j,\alpha}(x)t^iu^j(x\partial_xu)^\alpha,\quad u(0,x)=0,
\end{array}
\end{equation}
where $\tilde a$, $\tilde b$, $\tilde c$ and $\tilde a_{i,j,\alpha}$ are all holomorphic at $0\in\CC$,  
\begin{equation}\label{equation:bc}
 \tilde b(0)=b(0),\quad \tilde c(0)=c(0),\end{equation}
and
\begin{equation}\label{equation:valuationa}
 \val(\tilde a)\ge k, \quad
\val(\tilde a_{i,0,0})\ge k\, \;\;\mbox{for any }i\ge 2.
\end{equation}
\end{proposition}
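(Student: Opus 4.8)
The plan is to take for $v$ the truncation at order $k$ in $x$ of the formal solution and to let the substitution $u=v+xu$ both factor out the required powers of $x$ and cancel the low-order part of the forcing. First I would set $v(t,x):=\sum_{n=0}^{k}u_n(t)x^n$, where the $u_n$ are the coefficients of the unique formal solution \eqref{equation:utx}; each $u_n$ is holomorphic near $t=0$ with $u_n(0)=0$, so $v$ is holomorphic at $(0,0)$ and $v(0,x)=0$. Writing the old unknown as $v+xu$ (with $u$ now the new unknown), the two identities $\partial_x(v+xu)=\partial_x v+u+x\partial_x u$ and $t\partial_t(v+xu)=t\partial_t v+x\,t\partial_t u$ are the heart of the matter: in the new variables the symbol $\partial_x u$ occurs \emph{only} through the combination $x\partial_x u$. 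Hence plugging $v+xu$ into the holomorphic right-hand side of \eqref{equation:1} produces a holomorphic function of $(t,x,u,x\partial_x u)$, which is exactly the shape of \eqref{equation:vu} once the equation has been divided by $x$, and the new initial condition $u(0,x)=0$ follows from $v(0,x)=0$.

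Next I would reorganize the substituted equation as $x\,t\partial_t u=-E[v]+R$, where $E[v]$ is the residual obtained by plugging $v$ into \eqref{equation:1} (the $u$-independent part, a function of $(t,x)$ alone) and $R$ collects every term containing the new unknown. Two divisibility facts then make the division by $x$ legitimate. First, $R$ is divisible by $x$: a monomial of $R$ arising from a nonlinear term $a_{i,j,\alpha}(x)t^iu^j(\partial_x u)^\alpha$ either carries an explicit $x$ (the new unknown always enters as $xu$ or $x\partial_x u$), or it is one of the pure powers of $u$ produced by the factor $(\partial_x v+u+x\partial_x u)^\alpha$ with $\alpha\ge1$; in the latter case condition $(F)$ gives $a_{i,j,\alpha}(0)=0$, so $a_{i,j,\alpha}(x)$ is itself divisible by $x$. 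This is precisely where $(F)$ is needed to keep all new coefficients holomorphic. Second, $E[v]=O(x^{k+1})$: the coefficient of $x^m$ in $E[v]$ depends only on $u_0,\dots,u_m$ — the term $x^{k+1}c(x)\partial_x v$ contributes nothing below order $x^{k+1}$, and, again by $(F)$, the nonlinear derivative terms cannot reach $u_{m+1}$ — so for $m\le k$ it coincides with the corresponding coefficient of $E[\hat u]=0$.

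Finally I would read off the coefficients after dividing by $x$. The linear term in $u$ without derivative comes from $b(x)(xu)$ and $x^{k+1}c(x)u$, giving $\tilde b(x)=b(x)+x^kc(x)$, so $\tilde b(0)=b(0)$; the linear derivative term comes only from $x^{k+1}c(x)\partial_x u$, giving $\tilde c(x)=c(x)$ and $\tilde c(0)=c(0)$; and the pure forcing $\tilde a(x)t+\sum_{i\ge2}\tilde a_{i,0,0}(x)t^i$ equals $-E[v]/x$, which is $O(x^k)$, establishing \eqref{equation:valuationa}. A short bookkeeping argument keeps the genuinely nonlinear terms nonlinear: since $v$ and $\partial_x v$ vanish at $t=0$, expanding the products of $v$ and $\partial_x v$ in powers of $t$ only raises the total degree, so the relation $i'+j'+\alpha'\ge2$ is preserved. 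The main obstacle I expect is not any single computation but the simultaneous control in the two preceding divisibility checks: one must verify that condition $(F)$ makes $R/x$ holomorphic while the choice of $v$ makes $E[v]$ vanish to order $x^{k+1}$, these being the two independent mechanisms that together produce both the clean form \eqref{equation:vu} and the valuations in \eqref{equation:valuationa}.
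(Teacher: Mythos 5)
Your proof is correct and takes essentially the same route as the paper's: both choose $v$ to be a truncation of the formal solution (the paper allows any order $\ell\ge k$, you take $\ell=k$), substitute $u=v+xw$, invoke condition $(F)$ to make every new-unknown term divisible by $x$, and use the fact that $v$ satisfies the equation through order $x^{k}$ to get $\val(\tilde a)\ge k$ and $\val(\tilde a_{i,0,0})\ge k$ after dividing by $x$. Your explicit coefficient-of-$x^m$ argument for $E[v]=O(x^{k+1})$ simply fills in the step the paper dismisses with ``one can easily see.''
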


\begin{proof}
Assume that $a_{i,j,\alpha}(0)=0$ for all $\alpha>0$ and let $\hat u(t,x)$ be the power series solution of \eqref{equation:1} given in \eqref{equation:utx}.
Let $\ell\geq 1$ as a integer and set
\begin{equation}\label{equation:vell}
v(t,x)=u_0(t)+u_1(t)x+u_2(t)x^2+...+u_\ell(t)x^\ell,
\end{equation}
that is clearly holomorphic at $0\in\CC^2$ and $v(0,x)=0$.
If we make the change of unknown function $u=v+xw$ in \eqref{equation:1},
then by a direct computation, we can find that $w$ satisfies the following partial differential equation:
\begin{eqnarray*}\label{equation:uvw}
&&xt\partial_tw=a_1(x,t)+b_1(x)w+x^{k+2}c(x)\partial_xw\\
&&\quad\quad\quad  +R(t,x,v+xw,\partial_xv+w+x\partial_xw),\qquad
w(0,x)=0,\nonumber
\end{eqnarray*}
where
$$
R(t,x,X,Y)=\sum_{i+j+\alpha\ge 2}a_{i,j,\alpha}(x)t^iX^jY^\alpha,
$$
$$
a_1(x,t)=a(x)t+b(x)v+x^2c(x)\partial_xv-t\partial_tv
$$
and
$$
b_1(x)=x\bigl(b(x)+x^kc(x)\bigr).
$$
From \eqref{equation:vell} and the fact that $u_0(t)+u_1(t)x+u_2(t)x^2+...$ satisfies the equation \eqref{equation:1} terms by terms, one can easily see that
$$
a_1(x,t)+R(t,x,v,\partial_xv)= x^{\ell+1}tg(t,x),\quad g\in\CC\{t,x\}.
$$
Expanding the difference
$$
R(t,x,v+xw,\partial_xv+w+x\partial_xw)-R(t,x,v,\partial_xv)
,$$
one conclude the proof by choosing the integer $\ell\ge k$ and setting that
$$
\tilde a(x)=x^\ell g(0,x),\quad
\tilde b(x)=b(x)+x^kc(x).
$$
\end{proof}

From Proposition~\ref{proposition:conditionF}, one has following remark.

\begin{remark}\label{remark:conditionF}
 If the condition $(F)$ is satisfied, then the equation \eqref{equation:1} is analytically equivalent to following equation of the form:
\begin{equation}
\begin{array}{c}\label{equation:vu2}
t\partial_t u=\tilde a(x)t+\tilde b(x)u+\tilde c(x)x^{k+1}\partial_xu\\
[0.3cm]
\qquad\qquad\qquad\qquad+\sum\limits_{i+j+\alpha\ge
2}\tilde a_{i,j,\alpha}(x)t^iu^j(\partial_xu)^\alpha,\quad u(0,x)=0,
\end{array}
\end{equation}
where $\tilde a$, $\tilde b$, $\tilde c$ and $\tilde a_{i,j,\alpha}$ are all holomorphic at $0\in\CC$ and
\begin{equation*}\label{equation:bcalpha}
 \tilde b(0)=b(0),\quad \tilde c(0)=c(0),\quad
\val(\tilde a_{i,j,\alpha})\ge \alpha,\ \forall\alpha>0.
\end{equation*}

\end{remark}

Observe that, the same result holds even if the condition $(F)$ is weakened as following condition:  
$$
b(0)\notin \NN^*\quad\hbox{and}\quad val(a_{i,j,\alpha}(0))+jq>0,\ \forall\ \alpha>0,\leqno(F')
$$
where $q=\min\{\val(a_{i,0,0}): i\ge 2\}$. Indeed, if the condition $(F')$ is fulfilled and $q>0$, then there is no constant term in the power series expansion of the formal solution $\hat u$ of \eqref{equation:1} in the variable $x$. One may therefore write $u(t,x)=x w(t,x)$ in \eqref{equation:1} and deduce easily that the condition $(F)$ is then satisfied for the obtained equation on $w$; so one can get an equation of type \eqref{equation:vu2} by using the required analytical transformation.

Comparing with the condition $(F)$, the condition $(F')$ holds notably in any case where we only need the conditions $b(0)\notin \NN^*$ and $\val(a_{i,0,\alpha})>0$  for the equation \eqref{equation:1}.

\subsection{Extension to case where coefficients are given in a sector}\label{subsection:sectorF} In this subsection, we shall consider the equation \eqref{equation:1} again with the right hand side function $F(t,x,u,v)$ to be only analytic in $D\times V\times D\times D$, where $D$ is an open disc centered at $0\in\CC$ and $V=\{x\in\CC:\theta_1<\arg x<\theta_2,0<\vert x\vert<R\}$ is a germ of open sector of vertex at $0\in\CC$. Also here we suppose that the function $F(t,x,u,v)$ admits an asymptotic expansion for $x\to 0$ in $V$, i.e. there exists a sequence $(F_n)_{n\in\NN}$ of elements of $\OO(D^3)$ such that
$$
F(t,x,u,v)-\sum_{n=0}^{N-1}F_n(t,u,v)x^n=O(x^{N}),\quad
\forall\,N\in\NN\,;
$$ 
(cf. \cite[\S 1]{MR1} for the definition of an asymptotic expansion with holomorphic parameters). In this case, the function $F$ can be expanded again as follows:
\begin{equation}\label{equation:expansionF}
F(t,x,u,v)= a(x)t+b(x)u+\gamma(x)v+\sum_{i+j+\alpha\ge 2}a_{i,j,\alpha}(x)t^iu^jv^\alpha\,,
\end{equation}
where for the functions $a(x)$, $b(x)$, $\gamma(x)$, $a_{i,j,\alpha}(x)$, all belonging to $\OO(V)$, each of them has an asymptotic expansion as $x\to 0$ in $V$. In order to interpret the condition $(F)$ in this case, we adopt the following natural extension of the valuation at $0\in\CC$ for an element $f\in\OO(V)$ : if $f$ admits an asymptotic expansion $f_0+f_1x+f_2x^2+...$ for $x\to 0$ in $V$, then:
$$\val (f)=\sup\{n\in\NN:f_0=...=f_{n-1}=0,f_n\not=0\}\,.$$ We can therefore notice that $\val(f)=\infty$ if and only if $f$ is infinitely flat as $x\to 0$ in $V$.

\begin{remark}\label{remark:conditionF2}
Let $F$ be a function given as in \eqref{equation:expansionF} and let
$$k=\val(\gamma)-1\ge 1,\quad c(x)=\gamma(x)/x^{k+1}.
$$
If $\displaystyle\lim_{x\to0}b(x)\notin\NN^*$ and $\val(a_{i,j,\alpha})>0$ for all $\alpha>0$, then there exists a function $v(t,x)$ which is holomorphic at $(0,0)\in\CC^2$ such that under the transformation of the unknown function $u$ by $v+xu$, then the Cauchy problem
\begin{equation}\label{equation:problemF}
t\partial_t u=F(t,x,u,\partial_xu),\quad
u(0,x)=0\,
\end{equation}
can be deduced into the form \eqref{equation:vu}. Here we use the same notations as that in Proposition~\ref{proposition:conditionF}, where the functions $\tilde a$, $\tilde b$, $\tilde c$ and $\tilde a_{i,j,\alpha}$ are all holomorphic in $V$ and possess each an asymptotic expansion at $0$, and condition \eqref{equation:bc} may be read as follows: as $x\to 0$ in $V$,
$$\lim_{x\to0}\tilde b(x)=\lim_{x\to0}b(x),\quad \lim_{x\to0}\tilde c(x)=\lim_{x\to0}c(x)\,.
$$
\end{remark}

Indeed, the proof of Proposition~\ref{proposition:conditionF} may be easily adapted, by considering the following fact: The equation \eqref{equation:problemF} has a formal solution $\hat u\in\CC[[t,x]]$ and, moreover, one can prove that $\hat u\in t\CC\{t\}[[x]]$.

The situation of Remark~\ref{remark:conditionF2} will be discussed in subsection~\ref{subsection:coefficientsk} for the summability of the solutions.

\section{Proof of Theorem~\ref{theorem:yes} in case of $k=1$}\label{section:proofk=1}

In case of $k=1$, the corresponding $k$-summability becomes the classical Borel summability. See Remark~\ref{remark:ksummablenotation} for convention of notations.

Observe the Borel summability of a power series solution of any analytic ODE or PDE may be obtained by studying, in the Borel plane, the convolution functional equation obtained from the given equation. We will apply this idea to the Cauchy problem \eqref{equation:1} and then to prove, for every suitable direction $d$, the existence of solution in $\EE_d(\OO_0)$ for the transformed equation; see the equation \eqref{equation:utx*} below.

Let us assume \eqref{equation:1} to be given in the form \eqref{equation:vu} with conditions  \eqref{equation:bc} and \eqref{equation:valuationa}, so that the power series solution in \eqref{equation:utx} starts from the first order term $u_1(t)$ w.r.t. $x$, that means
\begin{equation}\label{equation:utx1}
\hat u(t,x)=u_1(t)x+u_2(t)x^2+\cdots+u_{n+1}(t)x^{n+1}+\cdots\,.
\end{equation}
In order to simplify the notations, instead of $\tilde a(x)$, $\tilde b(x)$, $\tilde c(x)$, $\tilde a_{i,j,k}(x)$, we will write $a(x)$, $b(x)$, $c(x)$ and $a_{i,j,k}(x)$.

Let $\tilde u(t,\xi)=\BT (u)(t,\xi)$ be the formal Borel transform with respect to $x$ of the power series solution $\hat u(t,x)$ in \eqref{equation:utx1}:
\begin{equation}\label{equation:BTu}
\tilde u(t,\xi)=u_1(t)+\frac{u_2(t)}{1!}\xi+\cdots+\frac{u_{n+1}(t)}{n!}\xi^n+\cdots\,.
\end{equation}
According to Theorem~\ref{theorem:ksummableexpansion}, we can reformulate Theorem~\ref{theorem:yes} by the following statement, where $SD_{b,c;1}$ will denote the set given by \eqref{equation:SDbck} for $k=1$.

\begin{theorem}\label{theorem:yes1}
For any direction $d\in\SS^1$ that does not belong to  $SD_{b,c;1}$, we have $\hat u\in\OO_0\{x\}^d$ or, equivalently, $\tilde u\in\EE_d(\OO_0)$, where $\OO_0=\CC\{t\}$.
\end{theorem}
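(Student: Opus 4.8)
The plan is to transfer the problem to the Borel plane and there solve a convolution equation by constructing the Taylor coefficients $\phi_n(t)$ of $\tilde u$ in $x$ (equivalently, of the $1$-Borel transform) as a recursively determined sequence in the Banach spaces $\EE_{S,\mu,n}$ introduced in Part~1, then verify that the resulting series $\sum_n \phi_n(t)\,\xi^n/\cdots$ meets the hypotheses of condition \eqref{item:ksummableexpansion3} of Theorem~\ref{theorem:ksummableexpansion}. Concretely, I would first apply the formal Borel transform $\BT$ in $x$ to the prepared equation \eqref{equation:vu}. Since $\BT$ turns the product $x\cdot$ into $\partial_\xi$, multiplication by $x^{k+1}$ (here $k=1$, so $x^2$) into $\partial_\xi^2$ acting suitably, and ordinary products into convolution $*$, the equation \eqref{equation:vu} becomes a convolution-differential equation for $\tilde u(t,\xi)$ of the schematic form $t\partial_t\tilde u = (\text{linear part in } b,c) + (\text{convolution nonlinearity})$. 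The key feature, guaranteed by the preparation in Proposition~\ref{proposition:conditionF}, is that $\partial_x u$ always enters as $x\partial_x u$, whose Borel image involves $\xi\partial_\xi$, so that Lemma~\ref{lemma:key} (and its estimate \eqref{equation:key}) applies directly to control the derivative terms.

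Next I would expand everything along powers of $x$, i.e. extract the coefficient $\phi_n(t)=u_{n+1}(t)/n!$ of $\xi^n$ in $\tilde u$. Because $a(x)$, $b(x)$, $c(x)$, $a_{i,j,\alpha}(x)$ are analytic at $x=0$, each such coefficient equation is of the form $(n-b-c\xi)\phi_n = (\text{explicit expression in }\phi_0,\dots,\phi_{n-1})$, where the right-hand side is built from the lower-order coefficients by convolution products, the differential operator $\xi\partial_\xi$, multiplication by $t$, and the holomorphic coefficient data. The factor $(n-b-c\xi)$ on the left is exactly the quantity controlled by the nonresonance estimate \eqref{equation:nbcsigma}, valid because $d\notin SD_{b,c;1}$; this lets me invert it and, via Lemma~\ref{lemma:key}, gain the extra power of $\delta(\xi)$ needed to place $\phi_n$ in $\EE_{S,\mu,n}$ with a controlled norm. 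The convolution estimates \eqref{equation:fgmunn'} and \eqref{equation:fgmumu'} of Proposition~\ref{proposition:convolution} then give the recursive bound on $\Vert\phi_n\Vert_{S,\mu,n}$ (with $t$-dependence carried uniformly on a small disc, as in \eqref{equation:ksummableEE}).

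The decisive step is to close the recursion into a convergent majorant, establishing a geometric bound $\Vert\phi_n\Vert^{(1)}_{S,\mu,n}\le C\,A^n$ (or the slightly weaker form needed so that $\sum_n \Vert\phi_n\Vert_{S,\mu,\nu n}\,x^n$ has positive radius, which is precisely condition \eqref{item:ksummableexpansion3}). I expect \emph{this} to be the main obstacle: the nonlinear terms produce sums of convolution products over all partitions of the index, and one must check that the combinatorics of these sums, weighted by the norm multiplicativity \eqref{equation:fgmunn'} and the loss factor $E=\sigma^{-1}(e^3+|\mu|)$ from each application of the key lemma, do not overwhelm the geometric decay. The standard device is to introduce a formal majorant series $\Phi(X)=\sum_n \|\phi_n\| X^n$ and show it is dominated by the solution of an auxiliary algebraic/analytic fixed-point equation of the form $\Phi = \text{(linear)} + \text{(holomorphic in }\Phi)$, whose analyticity at the origin follows from the implicit function theorem once one verifies that the constant term and the derivative of the nonlinearity are suitably small. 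The valuation hypotheses \eqref{equation:valuationa} (that $\val(\tilde a),\val(\tilde a_{i,0,0})\ge k$) are exactly what is needed to guarantee the series starts at high enough order so that this fixed point exists for small radius. Once the geometric bound is secured, Theorem~\ref{theorem:ksummableexpansion} yields $\tilde u\in\EE_d(\OO_0)$, and by the equivalence established there, $\hat u\in\OO_0\{x\}^d$, completing the proof.
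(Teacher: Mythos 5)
Your proposal reproduces the overall architecture of the paper's proof (Borel transform in $x$, the Nagumo spaces $\EE_{S,\mu,n}$, the nonresonance estimate \eqref{equation:nbcsigma}, the key lemma, convolution estimates, a majorant series closed by the implicit function theorem, and the conclusion via condition \eqref{item:ksummableexpansion3} of Theorem~\ref{theorem:ksummableexpansion}), but you attach the recursion to the wrong expansion, and this is a genuine gap, not a bookkeeping slip. You extract the coefficients $\phi_n(t)=u_{n+1}(t)/n!$ of $\xi^n$, which are functions of $t$; for such objects the claimed coefficient equation $(n-b-c\xi)\phi_n=(\hbox{expression in }\phi_0,\dots,\phi_{n-1})$ is meaningless: convolution in $\xi$ and the operator $\xi\partial_\xi$ act on functions of $\xi$, and the factor $(n-b-c\xi)$ can only arise from $t\partial_t$ acting on $t^n$. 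The paper expands the other way, $\tilde u(t,\xi)=\sum_{n\ge1}\tilde u_n(\xi)t^n$ as in \eqref{equation:utx*x}, so that each $\tilde u_n$ is a global function on the Borel sector satisfying the convolution equation \eqref{equation:utx*n}. The distinction is essential: summability demands analytic continuation of $\tilde u(t,\cdot)$ to an unbounded sector with exponential growth, and geometric bounds on Taylor coefficients at $\xi=0$ (which is what your indexing would produce) can at best give analyticity of $\tilde u$ near $\xi=0$, i.e. the Gevrey-$1$ character of $\hat u$, never its summability. Consistently, condition \eqref{item:ksummableexpansion3} of Theorem~\ref{theorem:ksummableexpansion} asks exactly for $\sum_{n\ge1}\Vert\tilde u_n\Vert_{S,\mu,n}t^n\in\CC\{t\}$, i.e. Nagumo norms of the $t$-coefficients viewed as functions of $\xi$, which is the statement \eqref{equation:yes} the paper proves.

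A second gap remains even after the indexing is repaired: \eqref{equation:utx*n} is not an explicit recursion. The unknown $\tilde u_n$ appears on both sides, through $B*\tilde u_n+C*(\xi\tilde u_n)$, because the non-constant parts of $b(x)$ and $c(x)$ produce convolutions that do not shift the power of $t$. The paper resolves each level by a Banach fixed-point argument (Lemma~\ref{lemma:leg2}), choosing $|\mu|$ large compared with $\Vert B\Vert_{S,\mu_0,0}+\Vert C\Vert_{S,\mu_0,0}$ as in \eqref{equation:mumu0} so that the associated map is a $\frac12$-contraction, which yields the bound $\Vert(n-b-c\xi)\tilde u_n\Vert_{S,\mu,m}\le 2\Vert F_n\Vert_{S,\mu,m}$ feeding the induction; your plan, which treats the right-hand side as built only from lower-order coefficients, omits this step entirely. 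Finally, a minor but real error: your Borel dictionary is backwards. Multiplication by $x$ corresponds to integration (convolution with $1$) in the $\xi$-plane, not to $\partial_\xi$, and the identities actually needed are $\BT(x^2\partial_xu)=\xi\,\BT(u)$ and $\BT(x\partial_xu)=\partial_\xi(\xi\,\BT(u))$; no $\partial_\xi^2$ occurs here precisely because the preparation \eqref{equation:vu} makes $\partial_xu$ appear only as $x\partial_xu$ (the operator $\partial_\xi^2(\xi\,\cdot)$ shows up only in the unprepared equation \eqref{equation:no} treated in Section~\ref{section:proofno}). Your majorant step via the auxiliary fixed-point equation is sound and is exactly the paper's \eqref{equation:Y}, but it can only be run once the two gaps above are closed.
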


The rest of the section will contain three subsections. In \S\ref{subsection:convolutionequation}, we will establish the convolution product differential equation which is satisfied by $\tilde u(t,\xi)$. In  \S\ref{subsection:lemmaBanach}, we apply Proposition~\ref{proposition:convolution} to get a contraction mapping and therefore the Banach fixed point theorem can be used. We will complete the proof of Theorem~\ref{theorem:yes1} in \S\ref{subsection:endproofyes}, which concludes the proof of Theorem~\ref{theorem:yes} in the case of $k=1$.

\subsection{Convolution product differential equation}\label{subsection:convolutionequation}
By making use of the following relations:
$$
\BT (f(x) g(x))(\xi)=\BT(f)(\xi)*\BT(g)(\xi), \quad \BT(x^2\partial_x u)(t,\xi)=\xi \BT(u)(t,\xi),
$$
and
$$
\BT(x\partial_x u)(t,\xi)=\partial_\xi(\xi\,\BT(u))(t,\xi)=(\xi\partial_\xi+1) \BT(u)(t,\xi),
$$
from \eqref{equation:vu} one obtains that $\tilde u(t,\xi)$ satisfies the following convolution product differential equation:
\begin{equation}\label{equation:utx*}
\begin{array}{l}
(t\partial_t-(b+c\xi))\tilde u
=A(\xi)t+B(\xi)*\tilde u+C(\xi)*(\xi\tilde u)\\[0.3cm]
\quad\quad+\sum\limits_{i+j+\alpha\geq 2}t^i
\left[A_{i,j,\alpha}(\xi)* \tilde u^{*j}* (\partial_\xi\xi\,\tilde u)^{*\alpha}
+B_{i,j,\alpha} \tilde u^{*j}* (\partial_\xi\xi\,\tilde u)^{*\alpha} \right].
\end{array}
\end{equation}
Here, $*$ denotes the convolution with respect to the variable $\xi$,
$$b=b(0),\quad
c=c(0)\quad
B_{i,j,\alpha}=a_{i,j,\alpha}(0)
$$
 and the functions  $A$, $B$,
$C$, $A_{i,j,\alpha}$ are the Borel transforms respectively to following functions:
$$a(x),\quad
b(x)-b,\quad
c(x)-c,\quad
a_{i,j,\alpha}(x)-a_{i,j,\alpha}(0)\,.
$$
By the condition \eqref{equation:valuationa}, one can notice that $B_{i,0,0}=0$.

If we write
\begin{equation}\label{equation:utx*x}
\tilde u(t,\xi)=\sum_{n\ge 1}{\tilde u_n}(\xi)t^n,
\end{equation}
then each coefficient ${\tilde u_n}(\xi)$ satisfies a functional equation of the
following form:
\begin{equation}\label{equation:utx*n}
(n-b-c\xi){\tilde u_n}(\xi)=B(\xi)*{\tilde u_n}(\xi)+C(\xi)*(\xi {\tilde u_n}(\xi))+F_n(\xi),
\end{equation}
where $F_n(\xi)$ only depends on $\tilde u_m$ and $\xi\partial_\xi \tilde u_m$ for $m\le n-1$. Notice that the function $\tilde u_1$ is merely solution to the following equation:
\begin{equation}\label{equation:utx*1}
(1-b-c\xi){\tilde u_1}(\xi)=B(\xi)*{\tilde u_1}(\xi)+C(\xi)*(\xi {\tilde u_1}(\xi))+A(\xi)\,.
\end{equation}

\subsection{Contraction mapping in Banach space}\label{subsection:lemmaBanach}

Since the function $F(t,x,u,v)$ appeared in \eqref{equation:1} is assumed to be holomorphic at $0\in\CC^4$, its Borel transform w.r.t. $x$, saying $(\BT F)(t,\xi,u,v)$, can be seen as an element of $\cap_{d\in\SS^1}\EE_d(\OO_0)$ with $\OO_0=\CC\{t,u,v\}$. Therefore, for any sector $S=S(d,\theta)$ with $\theta<\pi/2$, there exists  $\mu_0\in(0,\infty e^{-id})$
such that the following condition is satisfied:
\begin{equation}\label{equation:normseries}
A,\,B,\,C\in \EE_{S,\mu _0,0}\quad\hbox{and}\quad\sum_{i+j+\alpha\ge 2}\|A_{i,j,\alpha}\|_{S,\mu _0,0}t^i u^jv^\alpha\in\CC\{t,u,v\}\,.
\end{equation}

\begin{lemma}\label{lemma:leg2} Let $S=S(d,\theta)$, $\theta\in(0,\pi/2)$, $\sigma>0$ and $\mu_0\in (0,\infty e^{-id})$ be such that the conditions \eqref{equation:nbcsigma} (with $k=1$) and \eqref{equation:normseries} are satisfied. Let $\mu\in (0,\infty e^{-id})$ with 
\begin{equation}\label{equation:mumu0}
|\mu|=|\mu_0|+8(\sigma M_0\cos(\theta))^{-1}(\|B\|_{S,\mu_0,0}+\|C\|_{S,\mu_0,0}).
\end{equation}
If $F_n\in \EE_{S, \mu,m}$ for $m\geq 0$, then the equation (\ref{equation:utx*n}) has a unique solution
$\tilde u_n\in\EE_{S,\mu,m}$ and
\begin{equation}\label{esuf}
\|(n-b-c\xi)\tilde u_n\|_{S, \mu,m}\le 2 \|F_n\|_{S, \mu,m}.
\end{equation}
\end{lemma}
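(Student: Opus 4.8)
The plan is to recast equation~\eqref{equation:utx*n} as a fixed point problem in the Banach space $\EE_{S,\mu,m}$ and apply the contraction mapping theorem. First I would rewrite \eqref{equation:utx*n} as $(n-b-c\xi)\tilde u_n = T(\tilde u_n) + F_n$, where the linear operator $T$ is defined by
$$
T(g) = B*g + C*(\xi g)\,.
$$
Since the goal is to invert the operator $g\mapsto (n-b-c\xi)g - T(g)$, it is natural to introduce the auxiliary unknown $h = (n-b-c\xi)\tilde u_n$ and seek $h\in\EE_{S,\mu,m}$. Using the lower bound \eqref{equation:nbcsigma}, namely $|n-b-c\xi|\ge\sigma(n+|\xi|)\ge\sigma$, one sees that division by $(n-b-c\xi)$ maps $\EE_{S,\mu,m}$ into itself boundedly, so the problem $h = T\bigl((n-b-c\xi)^{-1}h\bigr)+F_n$ is equivalent to the original equation. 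The map $\Phi(h):= T\bigl((n-b-c\xi)^{-1}h\bigr)+F_n$ is affine, so it suffices to show its linear part is a strict contraction.

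The key step is estimating the linear part $h\mapsto T\bigl((n-b-c\xi)^{-1}h\bigr)$ in the norm $\|\cdot\|_{S,\mu,m}$. I would split it into the two convolution terms and apply Proposition~\ref{proposition:convolution}, Assertion~\eqref{assertion:fgmumu'}, with $f=B$ or $f=C$ taken in $\EE_{S,\mu_0,0}$ and $g=(n-b-c\xi)^{-1}h$ or $\xi(n-b-c\xi)^{-1}h$ taken in $\EE_{S,\mu,m}$ (here $|\mu|\ge|\mu_0|$ by \eqref{equation:mumu0}). This yields
$$
\|B*(n-b-c\xi)^{-1}h\|_{S,\mu,m}\le C_{\mu-\mu_0}\,\|B\|_{S,\mu_0,0}\,\|(n-b-c\xi)^{-1}h\|_{S,\mu,m},
$$
and analogously for the $C$-term, where $C_{\mu-\mu_0}=4\bigl(M_0\cos(\theta/2)\,|\mu-\mu_0|\bigr)^{-1}$. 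The bound $|n-b-c\xi|\ge\sigma(n+|\xi|)$ controls both $\|(n-b-c\xi)^{-1}h\|_{S,\mu,m}\le\sigma^{-1}\|h\|_{S,\mu,m}$ and, crucially, $\|\xi(n-b-c\xi)^{-1}h\|_{S,\mu,m}\le\sigma^{-1}\|h\|_{S,\mu,m}$ since $|\xi|/|n-b-c\xi|\le(\sigma)^{-1}$. Summing the two contributions gives a contraction constant of the shape
$$
\frac{4}{M_0\cos(\theta/2)\,|\mu-\mu_0|}\cdot\frac{\|B\|_{S,\mu_0,0}+\|C\|_{S,\mu_0,0}}{\sigma}.
$$

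The heart of the argument is then checking that the defining relation \eqref{equation:mumu0} forces this constant to be at most $1/2$, which simultaneously guarantees a unique fixed point and, via the Neumann-series bound $\|h\|\le(1-\tfrac12)^{-1}\|F_n\|$, delivers the estimate \eqref{esuf}. I expect the main obstacle to be bookkeeping the constants so that \eqref{equation:mumu0} is exactly the threshold that makes the contraction factor $\le 1/2$; one must be slightly careful about the mismatch between $\cos\theta$ appearing in \eqref{equation:mumu0} and $\cos(\theta/2)$ appearing in $C_{\mu-\mu_0}$, using $\cos(\theta/2)\ge\cos\theta$ for $\theta\in(0,\pi/2)$ to absorb the discrepancy safely into the inequality. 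Once the contraction factor is verified to be $\le 1/2$, the Banach fixed point theorem yields the unique $h\in\EE_{S,\mu,m}$, hence the unique $\tilde u_n=(n-b-c\xi)^{-1}h\in\EE_{S,\mu,m}$ by the boundedness of division, and the factor $2=(1-\tfrac12)^{-1}$ produces \eqref{esuf}.
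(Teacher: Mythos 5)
Your proposal is correct and follows essentially the same route as the paper: the paper likewise sets $\varphi=(n-b-c\xi)\tilde u_n$, defines the affine map ${\mathcal T}\varphi= B*\frac{\varphi}{n-b-c\xi}+C*\frac{\xi\varphi}{n-b-c\xi}+F_n$, invokes Proposition~\ref{proposition:convolution}~(\ref{assertion:fgmumu'}) together with the bounds $|n-b-c\xi|^{-1}\le\sigma^{-1}$ and $|\xi|\,|n-b-c\xi|^{-1}\le\sigma^{-1}$ to get a contraction factor $\tfrac12$ from the choice \eqref{equation:mumu0}, and concludes with the Banach fixed point theorem and the estimate $\|\varphi\|_{S,\mu,m}\le\tfrac12\|\varphi\|_{S,\mu,m}+\|F_n\|_{S,\mu,m}$. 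Your explicit verification of the constant, including the $\cos\theta$ versus $\cos(\theta/2)$ discrepancy, is exactly the bookkeeping the paper compresses into ``from the definition of $\mu$.''
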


\begin{proof} Let $\varphi(\xi)=(n-b-c\xi)\tilde u_n(\xi)$, and
consider the mapping
$$
{\mathcal T}:\varphi\mapsto B(\xi)*\frac{\varphi(\xi)}{n-b-c\xi}+
C(\xi)*\frac{\xi\varphi(\xi)}{n-b-c\xi} +F_n(\xi).
$$
 
 Since $B$, $C\in\EE_{S,\mu_0,0}$ and $\EE_{S,\mu,m}\subset\EE_{S,\mu',m'}$ if $\vert\mu\vert\le\vert\mu'\vert$ and $m\le m'$ (cf. \eqref{equation:normseries} and \eqref{equation:2norms}). 
By taking into account Proposition~\ref{proposition:convolution} and following relations, both deduced from the condition \eqref{equation:nbcsigma}:
$$\frac1{\left| n-b-c\xi\right|}\le \frac1\sigma\,,\quad
\frac{\left|\xi\right|}{\left| n-b-c\xi\right|}\le \frac1\sigma\,,$$  
then the mapping ${\mathcal T}$, as defined above, is a mapping: $\EE_{S,\mu,m}\to \EE_{S,\mu,m}$. Thus, by using Proposition~\ref{proposition:convolution} and from the definition of $\mu$, one has that, for any pair $(\varphi,\psi)\in\EE_{S,\mu,m}\times \EE_{S,\mu,m},$, the following estimate:
$$
\|{\mathcal T}\varphi-{\mathcal T}\psi\|_{S,\mu,m}
\le \frac12\,\|\varphi-\psi\|_{S,\mu,m}\,.
$$
Hence, from the Banach fixed point theorem, the equation (\ref{equation:utx*n}) has unique
solution $\tilde u_n$ such that $(n-b-c\xi)\tilde u_n(\xi)\in\EE_{S,\mu,m}$.
Moreover, the successive approximation process shows that
$$
\|(n-b-c\xi)\tilde u_n(\xi)\|_{S,\mu,m}\le 2^{-1}\|(n-b-c\xi)\tilde u_n(\xi)\|_{S,\mu,m}+\|F_n\|_{S,\mu,m},
$$
which implies the inequality (\ref{esuf}) and therefore completes the proof of Lemma~\ref{lemma:leg2}.
\end{proof}

\subsection{Proof of Theorem~\ref{theorem:yes1}}\label{subsection:endproofyes}

\begin{proof} First, by induction on $n$, we can deduce that $\tilde u_n\in\EE_{S,\mu,n-1}$. In fact, applying the result of Lemma~\ref{lemma:leg2} to the equation \eqref{equation:utx*1}, one has that $\tilde u_1\in\EE_{S,\mu,0}$, and then, from the result of Lemma~\ref{lemma:key}, we have that $\partial_\xi\xi \tilde u_1\in\EE_{S,\mu,1}$.

Secondly, let
\begin{equation}\label{equation:Y1}
Y_1:=\max\{\|\tilde u_1\|_{S,\mu,0},\|\partial_\xi (\xi \tilde u_1)\|_{S,\mu,1}\}<\infty.
\end{equation}
Expanding all terms of the equation \eqref{equation:utx*} as power series of $t$ and by using the result of Proposition~\ref{proposition:convolution} several times, one can find that, in \eqref{equation:utx*n}, the function $F_n(\xi)$ satisfies the following estimates:
$$
\|F_n\|_{S,\mu,n-2}\le\sum_{i+j+\alpha\ge 2}W_{i,j,\alpha}\sum_{ i+|{\bf h}|+|{\bf m}|=n}
U_{j,{\bf h}}\,V_{\alpha,{\bf m}}\,,
$$
where ${\bf h}\in{\NN^*}^j$, ${\bf m}\in{\NN^*}^{\alpha}$, $|{\bf h}|=h_1+\cdots+h_j$, $|{\bf m}|=m_1+\cdots+m_{\alpha}$ and 
\begin{equation}\label{equation:Wijalpha}
W_{i,j,\alpha}=|B_{i,j,\alpha}|+\|A_{i,j,\alpha}\|_{S,\mu,i+j+\alpha-2}\,,
\end{equation}
$$
U_{j,{\bf h}}=\prod_{\ell =1}^j\|\tilde u_{h_\ell}\|_{S,\mu,h_\ell -1},\quad
V_{\alpha,{\bf m}}=
\prod_{l =1}^\alpha\|\partial_\xi(\xi \tilde u_{m_l})\|_{S,\mu,m_l -1}\,.
$$
Indeed, we may notice that
\begin{eqnarray*}
&&i+j+\alpha-2+(h_1-1)+\cdots+(h_j-1)+(m_1-1)+\cdots+(m_\alpha-1)\\
&=&i+j+\alpha-2+|{\bf h}|+|{\bf m}|-j-\alpha\\
&=&i+|{\bf h}|+|{\bf m}|-2,
\end{eqnarray*}
which shows that the condition for the indices $n$, $n'$ is satisfied as required in the relation $\Vert f*g\Vert_{S,\mu,n+n'}\le\Vert f\Vert_{S,\mu,n}\,\Vert g\Vert_{S,\mu,n'}$ as that of Proposition~\ref{proposition:convolution}.

At the same time, for any $n\ge 2$, since $\|\tilde u_n\|_{S,\mu,n-1}\le
\|\tilde u_n\|_{S,\mu,n-2}$, from \eqref{equation:nbcsigma} (with $k=1$) and Lemma~\ref{lemma:leg2}, we obtain:
$$
\|\tilde u_n\|_{S,\mu,n-1}\le \|\sigma^{-1}(n-b-c\xi)\tilde u_n\|_{S,\mu,n-2}
  \le 2\sigma^{-1}\|F_n\|_{S,\mu,n-2}.
$$
Therefore, combining Lemma~\ref{lemma:key} with Lemma~\ref{lemma:leg2} yields:
$$
\|\partial_\xi(\xi\tilde u_n)\|_{S,\mu,n-1}
  \le 2(E+\sigma^{-1})\|F_n\|_{S,\mu,n-2}\,.
$$

Next, let $Y(t)$ be the solution of following analytical functional equation: 
\begin{equation}\label{equation:Y}
Y=Y_1t+\frac{2}{\sigma}\sum_{i+j+\alpha\ge2}W_{i,j,\alpha}\,
t^i\,(2\sigma^{-1} Y)^j\,(2(E+\sigma^{-1})Y)^\alpha,
\end{equation}
with $Y(0)=0$, where $Y_1$ and $W_{i,j,\alpha}$ are defined by \eqref{equation:Y1} and \eqref{equation:Wijalpha}, respectively. From \eqref{equation:valuationa} and the definition of $A_{i,j,\alpha}$ and $B_{i,j,\alpha}$, it follows that $W_{i,0,0}=0$ for all $i\ge 2$. Applying the implicit function theorem to the equation \eqref{equation:Y} we can deduce that $Y(t)$ is analytic function at $t=0\in\CC$. So we rewrite $Y(t)$ as a power series: 
$Y(t):=\sum_{n\ge 1}Y_nt^n $, thus we have
\begin{equation}
\sum_{n\ge 1}\|\tilde u_n\|_{S,\mu,n-1}t^n\ll
Y_1t+\frac2\sigma\sum_{n\ge 2}\|F_n\|_{S,\mu,n-2}t^n\ll
 Y(t), 
\end{equation}
which implies that
$$\sum_{n\ge 1}\|\tilde u_n\|_{S,\mu,n-1}t^n\in\CC\{t\}\,.
$$

Since
$$
\sum_{n\ge 1}\|\tilde u_n\|_{S,\mu,n}t^n\ll\sum_{n\ge 1}\|\tilde u_n\|_{S,\mu,n-1}t^n\,,
$$
then from Theorem~\ref{theorem:ksummableexpansion} \eqref{item:ksummableexpansion3}, Theorem~\ref{theorem:yes1} is proved. 
\end{proof}

\section{Theorem~\ref{theorem:yes} and comments}\label{section:proofgeneralcase}

In this section, we will give the proof for our main result Theorem~\ref{theorem:yes} for arbitrary level $k>0$. Comparing with the situation of $k=1$, the difference here, instead of the Borel transform $\BT$, we shall use the composite transform $\BT\circ\rho_k$.

In \S\ref{subsection:coefficientsk}, Theorem \ref{theorem:yes} has been extended to the case of equations whose coefficients are assumed to be $k$-summable in suitable directions. This extension will be useful in next section while the condition $(F)$ will be not satisfied. In \S\ref{subsection:Stokes}, we will only discuss the Stokes lines, although a more complete work on Stokes phenomena sounds interesting.

\subsection{End of proof of Theorem~\ref{theorem:yes}}\label{subsection:proofk}

\begin{proof}
Let $k\ge 1$ and the equation \eqref{equation:1} has been transformed to the form of \eqref{equation:vu}. For simplifying the notations, instead of $\tilde a(x)$, $\cdots$, $\tilde a_{i,j,\alpha}(x)$, the coefficients are still denoted by $a(x)$, $\cdots$, $a_{i,j,\alpha}(x)$, and $b=b(0)$, $c=c(0)$. From the condition \eqref{equation:valuationa} we know that all the coefficients $a(x)$ and $a_{i,0,0}(x)$, $i=2$, $\cdots$, belong to $x^k\,\CC\{x\}$. It follows that the formal solution $\hat u(t,x)$ belongs to $x^kt\CC\{t\}[[x]]$.

Let  $\tilde u(t,\xi)$ be the $k$-Borel transform of  $\hat u(t,x)$ w.r.t. $x$; as before, we write
$$
\tilde u(t,\xi)=\sum_{n\ge 1}\tilde u_n(\xi)t^n\,.
$$  So, in view of Theorem~\ref{theorem:ksummableexpansion} \eqref{item:ksummableexpansion3}, we may complete the proof of Theorem~\ref{theorem:yes} by checking the following statement:

{\it For any direction $d\notin SD_{b,c;k}$, there exist $S=S(R;d,\theta)$, $R>0$, $\theta\in(0,\pi/(2k))$ and $\mu\in(0,\infty e^{-id})$ such that the following relation holds:
\begin{equation}\label{equation:yes}
\sum_{n\ge 1}\Vert \tilde u_n\Vert^{(k)}_{S,\mu,n}\,t^n\in\CC\{t\}\,.
\end{equation}}

Indeed, we may write $\tilde u(t,\xi)=\BT\circ\rho_k\hat u(u,\xi)$, where $\rho_k$ denotes the ramification operator of order $k$ introduced in \S~\ref{subsection:extensionk}. From the equation \eqref{equation:vu}, we know that  $\tilde u$ satisfies following functional equation:
\begin{equation*}\label{equation:utx*k}
\begin{array}{l}
(t\partial_t-(b+c\xi))\tilde u
=A(\xi)t+B(\xi)*\tilde u+C(\xi)*(k\xi\tilde u)\\[0.3cm]
\quad\quad+\sum\limits_{i+j+\alpha\geq 2}t^i
\left[A_{i,j,\alpha}(\xi)* \tilde u^{*j}* (k\partial_\xi\xi\,\tilde u)^{*\alpha}
+B_{i,j,\alpha} \tilde u^{*j}* (k\partial_\xi\xi\,\tilde u)^{*\alpha} \right],
\end{array}
\end{equation*}
where, similar to the equation \eqref{equation:utx*},  the functions $A$, $B$,
$C$ and $A_{i,j,\alpha}$ are obtained by applying successively $\rho_k$ and $\BT$  to each of
$a(x)$, $b(x)-b$, $c(x)-c$ and
$a_{i,j,\alpha}(x)-a_{i,j,\alpha}(0)$, respectively. Therefore, the proof given in \S\ref{subsection:endproofyes} may be easily adapted to prove \eqref{equation:yes}, which implies the proof of Theorem~\ref{theorem:yes}.
\end{proof}

\subsection{Case of $k$-summable coefficients in equation \eqref{equation:1}}\label{subsection:coefficientsk}
Let us come back to the initial value problem \eqref{equation:problemF}, where the function $F$ is only assumed to have an asymptotic expansion for $x$ approaching to zero in a sector of the complex plane. If we suppose that $F\in\OO_0\{x\}^d_k$ for some direction $d\in\SS^1$, with $\OO_0=\CC\{t,u,v\}$, then, in the expression \eqref{equation:expansionF}, the coefficients $a(x)$, $b(x)$, $\gamma(x)$, $a_{i,j,\alpha}(x)$ belong to $\GG_d^{(k)}$; see Theorem~~\ref{theorem:ksummableexpansion}~\eqref{item:ksummableexpansion1}. We may therefore assume the function $F$ to be given for any $x$ in some open sector $V^{(k)}(R;d,\epsilon)$ defined by \eqref{equation:ksummableV}, for a suitable $R>0$ and $\epsilon\in(0,\pi)$.

\begin{theorem}\label{theorem:yeskcoeff}Let $F$ be given as in \eqref{equation:expansionF} and $k$-summable w.r.t. $x$ in direction $d$ with holomorphic parameters at $(t,u,v)={\bf 0}\in\CC^3$. If $\lim_{x\to 0}b(x)=b\notin\NN^*$, $\lim_{x\to 0}\gamma(x)x^{k+1}=c$ and $d\notin SD_{b,c;k}$, then the problem \eqref{equation:problemF} admits a unique solution in $\OO_0\{x\}_k^d$.
\end{theorem}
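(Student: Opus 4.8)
The plan is to run exactly the Borel-plane argument that proves Theorem~\ref{theorem:yes}, but now inside the functional spaces attached to a \emph{sector joined by a disc} developed in Section~\ref{section:extension}, since the only structural change is that the $k$-Borel transforms of the coefficients become germs at $\xi=0$ carried by $S(R;d,\theta)$ rather than entire functions of exponential type carried by a pure sector. First I would reduce the Cauchy problem \eqref{equation:problemF} to the prepared form \eqref{equation:vu} by the sectorial version of Proposition~\ref{proposition:conditionF} recorded in Remark~\ref{remark:conditionF2}: since $\lim_{x\to0}b(x)=b\notin\NN^*$, the problem has a unique formal solution $\hat u\in t\CC\{t\}[[x]]$, and the truncation $v(t,x)=\sum_{n\le\ell}u_n(t)x^n$ (with $\ell\ge k$) used there is holomorphic at the origin because each $u_n(t)$ is analytic in $t$. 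Crucially, the substitution $u=v+xw$ and the ensuing algebraic manipulations keep the coefficients inside $\GG_d^{(k)}$, because $\GG_d^{(k)}(\OO_0)$ is a differential algebra by Proposition~\ref{proposition:ksummablederivative} and $v$ is polynomial in $x$; after the reduction we obtain $\tilde a,\tilde b,\tilde c,\tilde a_{i,j,\alpha}\in\GG_d^{(k)}$ with $\tilde b(0)=b$, $\tilde c(0)=c$, satisfying the valuation bounds \eqref{equation:valuationa}, while the excluded set $SD_{b,c;k}$ depends only on $(b,c,k)$ and is unchanged.

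Next I would apply $\BT_k=\BT\circ\rho_k$ with respect to $x$ and write $\tilde u(t,\xi)=\sum_{n\ge1}\tilde u_n(\xi)t^n$. Because $F$ is $k$-summable with holomorphic parameters at $(t,u,v)=\mathbf 0$, Theorem~\ref{theorem:ksummableexpansion}~\eqref{item:ksummableexpansion3} furnishes, for a suitable $S=S(R;d,\theta)$ with $\theta\in(0,\pi/(2k))$ and $\mu_0\in(0,\infty e^{-id})$, the disc-joined analogue of the uniform bound \eqref{equation:normseries}, namely $A,B,C\in\EE^{(k)}_{S,\mu_0,0}$ together with $\sum_{i+j+\alpha\ge2}\Vert A_{i,j,\alpha}\Vert^{(k)}_{S,\mu_0,0}\,t^iu^jv^\alpha\in\CC\{t,u,v\}$, where $A,B,C,A_{i,j,\alpha}$ denote the $k$-Borel transforms of $\tilde a$, $\tilde b-b$, $\tilde c-c$, $\tilde a_{i,j,\alpha}-\tilde a_{i,j,\alpha}(0)$. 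Each $\tilde u_n$ then solves the $k$-level analogue of the convolution-differential equation \eqref{equation:utx*n}, and since $d\notin SD_{b,c;k}$, the lower bound \eqref{equation:nbcsigma} holds on $S$.

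The core of the proof is then a transcription of \S\ref{subsection:endproofyes} with every pure-sector tool replaced by its disc-joined counterpart: Proposition~\ref{proposition:convolution} by Proposition~\ref{proposition:NagumoR} for the convolution estimates, and Lemma~\ref{lemma:key} by Proposition~\ref{proposition:keyRderivative} together with Corollary~\ref{cor:keyR} for the control of $\partial_\xi(\xi\tilde u_n)$ in terms of $(n-b-c\xi^k)\tilde u_n$. Rerunning the contraction argument of Lemma~\ref{lemma:leg2} on $\EE^{(k)}_{S,\mu,m}$ for large enough $|\mu|$ yields by induction $\tilde u_n\in\EE^{(k)}_{S,\mu,n-1}$ with the recursive norm bounds, and the same majorant series $Y(t)$, analytic at $t=0$ by the implicit function theorem, dominates $\sum_{n\ge1}\Vert\tilde u_n\Vert^{(k)}_{S,\mu,n}\,t^n$. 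This establishes \eqref{equation:yes}, whence $\hat u\in\OO_0\{x\}_k^d$ by Theorem~\ref{theorem:ksummableexpansion}~\eqref{item:ksummableexpansion3}. Uniqueness is immediate: the formal solution is unique because $b\notin\NN^*$, and its $k$-sum in the fixed direction $d$ is unique by the uniqueness recorded after Definition~\ref{definition:ksummableseries}.

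The hard part will not be any single estimate, since those are all in hand from Section~\ref{section:extension}, but rather the passage from a pure sector to a sector joined by a disc. One must confirm that a single triple $(R,\theta,\mu_0)$ can be chosen so that $A$, $B$, $C$ and the entire family $\{A_{i,j,\alpha}\}$ all lie in the same space $\EE^{(k)}_{S,\mu_0,0}$ with summable norms; this uniformity is exactly what the hypothesis of $k$-summability of $F$ with holomorphic parameters over a neighborhood of $(t,u,v)=\mathbf 0$ delivers through Theorem~\ref{theorem:ksummableexpansion}, and it is the clause that replaces the automatic entireness available in the convergent-coefficient case of Theorem~\ref{theorem:yes}.
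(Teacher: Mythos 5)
Your proposal is correct and follows essentially the same route as the paper: reduction to the prepared form \eqref{equation:vu} via Remark~\ref{remark:conditionF2}, then rerunning the Borel-plane contraction and majorant argument of Theorem~\ref{theorem:yes} in the disc-joined spaces $\EE^{(k)}_{S,\mu,n}$ of Section~\ref{section:extension}, concluding with Theorem~\ref{theorem:ksummableexpansion} and deriving uniqueness from that of the formal solution and of the $k$-sum. The paper's own proof is only a brief sketch of this plan, and your identification of Propositions~\ref{proposition:NagumoR},~\ref{proposition:keyRderivative} and Corollary~\ref{cor:keyR} as the substitutes for the pure-sector tools is precisely the adaptation the paper leaves implicit.
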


\begin{proof}
According to Remark~\ref{remark:conditionF2} and from \eqref{equation:problemF}, we get an analytically equivalent equation of form \eqref{equation:vu}. Follow the proof of Theorem~\ref{theorem:yes}, we can obtain the $k$-summability in direction $d$ of the unique formal solution for this equation, and, by applying the $k$-Laplace transform, we can then construct a solution which satisfying the condition of Theorem~\ref{theorem:yeskcoeff}.

The uniqueness of the solution can be deduced from that of the formal solution and that of  $k$-sum function. See Theorem~\ref{theorem:ksummableexpansion} and Theorem~\ref{theorem:ksummable} here.
\end{proof}

\subsection{Singular directions and Stokes phenomenon}\label{subsection:Stokes}

In this paragraph, we only discuss the case of $k=1$, and the general case can be easily deduced by the help of the ramification operator of level $k$. For any positive integer $n$, we set:
$$\xi_n:=\frac{n-b}{c},\quad d_n=\arg\xi_n,\quad
L_{n}:=[\xi_n,\infty e^{id_n})
$$
and we consider the simply connected domain $\Omega_n$ defined by the following relation:
$$
\Omega_n:=\CC\setminus\cup_{\ell=1}^nL_n=\Omega_{n-1}\setminus L_n\,.
$$

By convention, we write:
$$\Omega_0=\CC,\quad
\Omega_\infty=\cup_{\ell\ge 1}\Omega_\ell\,.$$

By taking a determination of the complex logarithm over $\CC\setminus[0,\infty)$, all functions $\log(\xi-\xi_n)$ will be defined over $\Omega_\ell$ once $\ell\ge n$.
We notice also that $d\notin SD_{b,c;1}$ if and only if, there exists $\theta>0$ such that $S(d,\theta)\subset \Omega_\infty$.

\begin{definition}\label{definition:EOmega} Let $\Omega=\Omega_n$, $n\in\NN\cup\{\infty\}$ and $d\in\SS^1$ and let $f\in\OO(\Omega)$.
\begin{itemize}
 \item We say that $d$ is a proper direction in $\Omega$ if there exists $\theta>0$ such that $S(d,\theta)\subset\Omega$.
\item The function $f$ is said to belong to $\EE(\Omega)$ if $f\in\EE_d$ for any proper direction $d$ in $\Omega$.
\end{itemize}
\end{definition}

When $\Omega=\CC$, the set $\EE({\CC})$ is merely the space of entire functions possessing at infinity a growth of at most first order. Observe that, in \eqref{equation:utx*1}, all functions $A$, $B$ and $C$ belong to $\EE({\CC})$, one can see that the only singular point for $\tilde u_1$ may be $\xi=\xi_1$ and, by this way, one can analyze the location of singularities for other $\tilde u_n$. This idea can be realized by the help of the following lemma.

\begin{lemma}\label{lemma:singularities}
Let $n$ be a positive integer and let $B$, $C$ and $F\in\EE(\Omega_{n-1})$. Then the following convolution equation:
\begin{equation}\label{equation:xin}
 (\xi-\xi_n)\psi(\xi)=B*\psi(\xi)+C*(\xi\psi)(\xi)+F(\xi)\,,
\end{equation}
admits a unique solution $\psi$ in $\EE(\Omega_n)$ such that, for $\xi\to\xi_n$ in $\Omega_n$, $\psi$ can be written in the following form:
\begin{equation}\label{equation:psixin}
 \psi(\xi)=\frac1{\xi-\xi_n}\sum_{m\ge 0}A_m(\xi)\,(\log(\xi-\xi_n))^m\,,
\end{equation}
where $A_m\in \EE(\Omega_{n-1})$.
\end{lemma}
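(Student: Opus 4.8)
The plan is to construct $\psi$ by a fixed-point argument localized near the new singular point $\xi_n$, using the already-established Banach-space machinery of Part~1, and then to bootstrap the logarithmic expansion \eqref{equation:psixin} by feeding the convolution back into itself. First I would rewrite \eqref{equation:xin} as
\begin{equation*}
\psi(\xi)=\frac{1}{\xi-\xi_n}\Big(B*\psi+C*(\xi\psi)+F\Big),
\end{equation*}
and seek a solution on a small sector or disc-sector attached at $\xi_n$. Away from $\xi_n$ the factor $1/(\xi-\xi_n)$ is bounded, so on any proper sub-sector $S(d,\theta)$ with $d$ a proper direction of $\Omega_n$ the operator $\mathcal T:\psi\mapsto (\xi-\xi_n)^{-1}(B*\psi+C*(\xi\psi)+F)$ maps a suitable $\EE_{S,\mu,0}$ into itself; the division by $\xi-\xi_n$ is harmless because $\xi_n\notin S$, and the convolution estimates of Proposition~\ref{proposition:convolution} (together with \eqref{equation:fgmumu'}, choosing $|\mu|$ large enough relative to $\|B\|,\|C\|$ exactly as in Lemma~\ref{lemma:leg2}) make $\mathcal T$ a contraction. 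This yields a unique $\psi\in\EE_d$ for every proper direction $d$ in $\Omega_n$, i.e. $\psi\in\EE(\Omega_n)$, and by the usual gluing over overlapping sectors these local solutions agree and define a single $\psi\in\OO(\Omega_n)$.

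The delicate part is the local behaviour at $\xi_n$, that is, proving the expansion \eqref{equation:psixin} with coefficients $A_m\in\EE(\Omega_{n-1})$. Here I would argue as follows. By hypothesis $B,C,F\in\EE(\Omega_{n-1})$, so they are already analytic in a neighbourhood of $\xi_n$ inside $\Omega_{n-1}$ (the cut $L_n$ is removed only to define $\psi$, not $B,C,F$). Writing $\psi=(\xi-\xi_n)^{-1}\Phi$ with $\Phi=B*\psi+C*(\xi\psi)+F$, the point is that convolution of a function with a mild singularity $(\xi-\xi_n)^{-1}(\log(\xi-\xi_n))^m$ against an analytic kernel produces again a function of the same type but with the power of the logarithm raised by at most one. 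Concretely, I would substitute the ansatz $\psi(\xi)=(\xi-\xi_n)^{-1}\sum_{m\ge0}A_m(\xi)(\log(\xi-\xi_n))^m$ into \eqref{equation:xin}, expand $B*\psi$ and $C*(\xi\psi)$ by splitting the integration path at $\xi_n$, and match the coefficients of each power $(\log(\xi-\xi_n))^m$. This produces a triangular system of convolution equations for the $A_m$: the equation for $A_m$ involves $B*A_m$, $C*(\xi A_m)$, and terms built from $A_0,\dots,A_{m-1}$ and from $F$ and its regular part near $\xi_n$. Each such equation is of exactly the resolved form already treated, so by the same contraction argument each $A_m\in\EE(\Omega_{n-1})$, and one checks the series in $m$ converges because the logarithmic powers accumulate only through repeated convolution, which gains a factor controlled by the length of the integration path.

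The main obstacle I anticipate is precisely this analysis of how the convolution product acts on the singular template near $\xi_n$: one must show that convolving $(\xi-\xi_n)^{-1}(\log(\xi-\xi_n))^m$ (times something analytic) with an analytic function neither worsens the singularity beyond $(\xi-\xi_n)^{-1}(\log)^{m+1}$ nor destroys membership in $\EE(\Omega_{n-1})$ for the resulting coefficients. This is a careful local computation of an integral of the form $\int_{\xi_0}^{\xi}B(\xi-\tau)(\tau-\xi_n)^{-1}(\log(\tau-\xi_n))^m\,d\tau$, where the leading logarithmic contribution comes from the endpoint $\tau\to\xi_n$ and the remainder stays in $\EE(\Omega_{n-1})$. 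Once this local lemma on ``endpoint singularities of convolutions'' is in hand, the global existence in $\EE(\Omega_n)$, the uniqueness (both following from the contraction property), and the form \eqref{equation:psixin} all follow by combining it with the Banach fixed-point scheme; the remaining estimates are routine and parallel those of Lemma~\ref{lemma:leg2} and Proposition~\ref{proposition:convolution}, so I would not grind through them here.
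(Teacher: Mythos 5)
Your first half is fine and coincides in substance with the paper's: the paper produces the germ of $\psi$ at $\xi=0$ from the convolution identities $\xi^\ell*\xi^m=\frac{\ell!\,m!}{(\ell+m+1)!}\,\xi^{\ell+m+1}$, continues it analytically over $\Omega_n$, and invokes Lemma~\ref{lemma:leg2} for the exponential bounds giving $\psi\in\EE(\Omega_n)$ --- the same contraction machinery you use. The genuine gap is in the part you yourself flag as delicate. You claim that substituting the ansatz $\psi=(\xi-\xi_n)^{-1}\sum_{m\ge0}A_m(\xi)\log^m(\xi-\xi_n)$ into \eqref{equation:xin} and matching powers of the logarithm yields a \emph{triangular} system, the equation for $A_m$ involving only $B*A_m$, $C*(\xi A_m)$ and $A_0,\dots,A_{m-1}$. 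This is not so. Convolving $B$ against a block $(\tau-\xi_n)^{-1}A_j(\tau)\log^j(\tau-\xi_n)$ produces, besides the leading singularity of type $\log^{j+1}(\xi-\xi_n)$, single-valued multiples of \emph{every} power $\log^i(\xi-\xi_n)$ with $0\le i\le j+1$: already for $j=0$, the monodromy of $B*\bigl((\cdot-\xi_n)^{-1}A_0\bigr)$ around $\xi_n$ equals $2\pi i\,A_0(\xi_n)\,B(\xi-\xi_n)$ by the residue theorem, so this convolution is $A_0(\xi_n)B(\xi-\xi_n)\log(\xi-\xi_n)$ plus a single-valued remainder depending on all of $A_0$. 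Hence the coefficient of $\log^m$ on the right-hand side of \eqref{equation:xin} involves every $A_j$ with $j\ge m-1$; the system is infinitely coupled \emph{upwards} (lower coefficients depend on all higher ones, an anticipative structure of the same kind the paper meets in Section~\ref{section:proofno}), your induction on $m$ cannot start, and the fixed-point problem you would pose for an individual $A_m$ has infinitely many unknowns sitting in its data.

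The paper resolves exactly this difficulty by ordering the expansion by perturbation order rather than by log-degree (Appendix~\ref{section:perturbation}): insert a parameter $\epsilon$ in front of $B*\psi+C*(\xi\psi)$, expand $\psi=\sum_{\ell\ge0}\psi_\ell\,\epsilon^\ell$, so that $\psi_0=F/(\xi-\xi_n)$ and $\psi_{\ell+1}=(\xi-\xi_n)^{-1}\bigl(B*\psi_\ell+C*(\xi\psi_\ell)\bigr)$. This recursion is explicit --- nothing has to be solved at each step --- and an induction on $\ell$, using precisely the endpoint/monodromy analysis you postponed (the functions $H_m=H*\log^m(\xi-\xi_n)$ and the operator $\gamma_n$ of the appendix), shows that $\psi_\ell=(\xi-\xi_n)^{-1}\sum_{j=0}^\ell\psi_{\ell,j}\,\log^j(\xi-\xi_n)$ with $\psi_{\ell,j}\in\EE(\Omega_{n-1})$; the coefficients of \eqref{equation:psixin} are then recovered as $A_m=\sum_{\ell\ge m}\psi_{\ell,m}$ upon setting $\epsilon=1$. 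If you insist on your coefficient-matching framework, you would have to treat the whole sequence $(A_m)_{m\ge0}$ as a single unknown in a suitable sequence space and run one global fixed-point argument --- which, after reindexing, is the paper's perturbation series in disguise. So the missing idea is this reorganization of the expansion; without it the scheme you propose does not close.
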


\begin{proof}
Since for any $(\ell,m)\in\NN^2$, we have 
$$
\xi^\ell*\xi^m=\frac{\ell!\,m!}{(\ell+m+1)!}\,\xi^{\ell+m+1}\,.
$$
It is easy to check that a unique germ of analytic function $\psi$ may be found near $\xi=0$ as the solution of the equation \eqref{equation:xin}. Moreover, one can carry out analytic continuation process at each point of $\Omega_n$ to find that the solution $\psi$ exists over the whole $\Omega_n$.

At the same time, by Lemma~\ref{lemma:leg2} we can obtain that $\psi\in\EE(\Omega_n)$. In order to get \eqref{equation:psixin}, we may apply the so-called {\it perturbation method}  to the equation \eqref{equation:xin}, more details of the proof can be found from Appendix \ref{section:perturbation}.
\end{proof}

The result of Lemma \ref{lemma:singularities} can be used to analyze Stokes phenomena, we hope to return to this problem in a future work.

\section{Some results without Condition $(F)$}\label{section:withoutF}

This section is devoted to some discussions while the condition $(F)$ is no longer satisfied. In \S~\ref{subsection:withoutFlinear}, the equation \eqref{equation:1} will be assumed to be linear in $\partial_xu$, that means that $a_{i,j,\alpha}=0$ for all $\alpha>1$. In this case, we will show that an analytic change of variables permits to reduce \eqref{equation:1} into the form of \eqref{equation:vu}, in which Theorem~\ref{theorem:yes} can be applied (cf. Theorem~\ref{theorem:withoutFlinear} here).

In \S~\ref{subsection:withoutFgeneral}, a {\it singular} transformation $(t,x)\mapsto (t/x,x)$ can be used to study more general Cauchy problem \eqref{equation:1} in which we only suppose the formal solution exists. Thanks to this change of variables, it will be shown, in Theorem~\ref{theorem:withoutFgeneral}, that the problem \eqref{equation:1} admits always a solution which is analytic in any suitable conical domain of the form $\{(t,x)\in\CC\times V^{(k)}(R;d,\epsilon): 0<|tx|<R\}$.

\subsection{Semilinear cases.}\label{subsection:withoutFlinear} Let us consider the Cauchy problem \eqref{equation:1} again with the conditions $a_{i,j,\alpha}=0$ for all $\alpha>0$ and $j+\alpha\geq 2$. Then we have following semilinear problem:
\begin{eqnarray}\label{equation:withoutFlinear}
&&t\partial_t u=a_1(t,x)t+a_2(t,x)x^{k+1}\partial_x u\\
&&\quad\quad\quad+a_3(t,x)t\partial_x u+g(t,x,u),\quad\quad u(0,x)=0,\nonumber
\end{eqnarray}
where $a_j(t,x)$, $1\le j\le 3$ and $g(t,x,u)$ are holomorphic at $0\in\CC^2$ or $0\in\CC^3$, respectively. Moreover, without loss of generality, we can suppose that $g(0,x,0)=\partial_tg(0,x,0)=0$.

Observe that, for the equation \eqref{equation:withoutFlinear}, the condition ($F$) is satisfied if and only if $a_3(t,0)=0$. Thus we have

\begin{theorem}\label{theorem:withoutFlinear}
Consider the equation \eqref{equation:withoutFlinear}, if $k\geq 1$, $a_2(0,0)\not=0$ and $a_3(t,0)\not=0$, then there exists a holomorphic function $f(t)$ at $t=0$ with $f(0)=0$, such that under the variable transformation, i.e. the variable $x$ being replaced by $x-f(t)$, the equation \eqref{equation:withoutFlinear} can be reduced into the form of the equation \eqref{equation:vu}.
\end{theorem}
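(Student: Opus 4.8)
The plan is to straighten, by a change of variable $s=x-f(t)$, the characteristic curve through the origin of the first-order transport part of \eqref{equation:withoutFlinear}, i.e. of the vector field $t\partial_t-\bigl(a_2(t,x)x^{k+1}+a_3(t,x)t\bigr)\partial_x$. Concretely I would take $f$ to be the holomorphic solution of the singular (Briot--Bouquet type) ODE
\[
tf'(t)=-a_2(t,f(t))\,f(t)^{k+1}-t\,a_3(t,f(t)),\qquad f(0)=0.
\]
Writing the right-hand side as $\Psi(t,f)$, one has $\Psi(0,0)=0$ and, because $k\ge1$, $\partial_f\Psi(0,0)=0\notin\NN^*$; the classical Briot--Bouquet theorem then gives a unique holomorphic solution with $f(0)=0$ (the coefficients of $f=\sum_{n\ge1}f_nt^n$ are obtained from $(n-\partial_f\Psi(0,0))f_n=n f_n=\cdots$, and convergence follows by the majorant method). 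Choosing the characteristic of the \emph{full} transport field, rather than of the term $a_3t\partial_x u$ alone, is the conceptual heart of the argument.

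Next I would set $w(t,s)=u(t,s+f(t))$, so that $\partial_x u=\partial_s w$ and $\partial_t u=\partial_t w-f'(t)\partial_s w$. Substituting into \eqref{equation:withoutFlinear} and collecting the terms carrying $\partial_s w$ yields
\[
t\partial_t w=a_1(t,x)\,t+G(t,s)\,\partial_s w+g(t,x,w),\qquad x=s+f(t),
\]
with
\[
G(t,s)=a_2(t,s+f(t))\,(s+f(t))^{k+1}+t\bigl(a_3(t,s+f(t))+f'(t)\bigr).
\]
The purpose of the ODE above is precisely that $G(t,0)=a_2(t,f)f^{k+1}+t(a_3(t,f)+f')=0$ by construction, while $G(0,s)=a_2(0,s)\,s^{k+1}$ because $f(0)=0$.

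Since $G(t,s)-a_2(0,s)s^{k+1}$ vanishes both on $\{s=0\}$ (as $G(t,0)=0$) and on $\{t=0\}$ (as $G(0,s)=a_2(0,s)s^{k+1}$), it is divisible by $st$, say $G(t,s)=a_2(0,s)s^{k+1}+s\,t\,\Theta(t,s)$ with $\Theta$ holomorphic at the origin. Hence
\[
G(t,s)\,\partial_s w=\tilde c(s)\,s^{k+1}\partial_s w+t\,\Theta(t,s)\,(s\partial_s w),\qquad \tilde c(s):=a_2(0,s),
\]
so that, apart from the leading term $\tilde c(s)s^{k+1}\partial_s w$, every occurrence of $\partial_s w$ now carries a factor $s$; moreover $\tilde c(0)=a_2(0,0)\neq0$, whence $\val(\tilde c(s)s^{k+1})=k+1$.

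Finally I would expand $a_1(t,s+f(t))\,t$ and $g(t,s+f(t),w)$ in powers of $t$ and $w$: as neither contains $\partial_s w$, they produce only terms of type $\tilde a(s)t$, $\tilde b(s)w$ (with $\tilde b(0)=\partial_u g(0,0,0)=b(0)$) and higher monomials $\tilde a_{i,j,0}(s)t^iw^j$, all admissible in \eqref{equation:vu}; in particular condition \eqref{equation:bc} is respected. Collecting everything shows that $w$ solves an equation of the form \eqref{equation:vu}. The main obstacle is the first step: establishing solvability of the singular ODE in the holomorphic category (the resonance condition $\partial_f\Psi(0,0)\notin\NN^*$ together with convergence), and recognizing that one must straighten the characteristic of the combined field $a_2x^{k+1}+a_3t$ and not merely of $a_3t$ — the latter choice makes the bracketed term vanish only to first order and leaves the inadmissible remainder $a_2(t,f)f^{k+1}\partial_s w=O(t^{k+1})\partial_s w$, which is not of the form \eqref{equation:vu}.
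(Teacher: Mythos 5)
Your proof is correct and is essentially the paper's own argument: there too, $f$ is defined as the analytic solution of the singular Fuchsian ODE that kills the coefficient of $\partial_x u$ along the shifted axis (the paper cites the Maillet--Malgrange theorem where you cite Briot--Bouquet, which are interchangeable for this resonance-free Fuchsian equation), and the shift $x\mapsto x\pm f(t)$ then puts the equation into the form \eqref{equation:vu}, the paper verifying this by an explicit binomial expansion of $(z-f(t))^{k+1}$ together with splitting off $\bar a_3(t,0)$, where you use the cleaner observation that $G(t,s)-a_2(0,s)s^{k+1}$ vanishes on both coordinate axes and is hence divisible by $st$. Incidentally, your sign conventions are the self-consistent ones: the paper's equation \eqref{transf} effectively drops the factor $(-1)^{k+1}$ coming from $(-f)^{k+1}$, so as written its ODE achieves the cancellation only for odd $k$, whereas your ODE $tf'=-a_2(t,f)f^{k+1}-t\,a_3(t,f)$ works for every $k\ge1$.
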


\begin{proof}
Let $f$ be a solution of following nonlinear Fuchsian equation: 
\begin{equation}\label{transf}
ty^\prime (t)=a_2(t,-y(t))y^{k+1}(t)+a_3(t,-y(t))t,\qquad y (0)=0.
\end{equation}
Then according to Maillet-Malgrange Theorem \cite{Ma}, the problem \eqref{transf} has a unique analytic solution at $t=0$, thus the solution $f$ is a analytic function at $t=0$ with $f(0)=0$. If we set 
\begin{equation}\label{tran1}
z= x+f(t) \quad\hbox{and}\quad w(t,z)=u(t,z-f(t)),
\end{equation}
then we can rewrite \eqref{equation:withoutFlinear} into the following form:
\begin{eqnarray}\label{equation:withoutFlinearw}
t\partial_t w+t f'(t)\partial_z
w=\bar a_1(t,z)t+\bar a_2(t,z)(z-f(t))^{k+1}\partial_z w\\
 +\bar a_3(t,z)t
\partial_z w+g(t,z-f(t),w),\qquad w(0,z)=0,\nonumber
\end{eqnarray}
where, for $i=1$, $2$, $3$, we write $
\bar a_i(t,z)=a_i(t,z-f(t))$. Then the equation \eqref{equation:withoutFlinearw} becomes:
\begin{equation}\label{ex1new}
\left\{\begin{array}{l}
t\partial_t w=\bar a_1(t,z)\,t +\bar a_2(t,z) z^{k+1}\partial_z w+G(t,x,w,z\partial_z w),\\[0.3cm]
w(0,z)=0,
\end{array}\right.
\end{equation}
where 
\[ \begin{array}{r}
\displaystyle G(t,x,w,z\partial_z w)=\bar a_2(t,z)\,\sum_{j=1}^{k}\frac{(k+1)!}{j!(k+1-j)!} f^j(t)z^{k-j}\,(z\partial_z w)\\[0.3cm]
\displaystyle-\frac{\bar a_3(t,z)-\bar a_3(t,0)}z\,t\, (z\partial_z w)+g(t,z-f(t),w)\,.
\end{array}
\]
One can then complete the proof, by checking that \eqref{ex1new} is a particular case of the equation \eqref{equation:vu}, where $x$ and $u$ are replaced by $z$ and $w$, respectively.
\end{proof}

Since $a_3(t,0)\not=0,$, and from the equation \eqref{transf}, one has $\val(f)=\val(a_3(t,0)t)=q>0$, then the result of \cite{CLT} implies that $\hat u(t,x)\in \CC[[t,x]]_{1/(qk),1/k}$. Moreover, let
$$f(t)=\sum\limits^\infty_{m=q}f_mt^m,\qquad
p_n(t)=\sum\limits^n_{m=q}f_mt^m\quad (n\ge q)
$$
and let $\hat u(t,x)$ be the formal solution of \eqref{equation:withoutFlinear}. If we consider the {\it $n$-th modified} formal solution $\hat u(t,x-p_n(t)):=\hat w_n(t,x)$, then one can find that
$\hat w_n(t,x)\in\CC[[t,x]]_{1/kq_n,1/k}$, where $q_n=\val(f(t)-q_n(t))>n$. When $n\to \infty$, formally we have that $\hat w_n(t,x)\to \hat w(t,x)\in\CC\{t\}[[x]]_{1/k}$, thus by using the result of Theorem~\ref{theorem:yes}, $\hat w$ will be $k$-summable with holomorphic parameter at $0$ in almost all  direction of $x$-plane.

In order to obtain the analytic solution of the equation \eqref{equation:withoutFlinear}, we 
let
$$a_3(t,0)t=\beta t^q+O(q^{q+1}),\quad
\beta\not=0,\ q\in\NN^*
$$ and define:
\[
 V^{(q,\beta;k)}(R;d,\epsilon):=\left\{(t,x)\in D(0;R)\times\CC:\frac{t^q}\beta+x\in V^{(k)}(R;d,\epsilon)\right\}\,,
 \]
where $V^{(k)}(R;d,\epsilon)$ is defined by \eqref{equation:ksummableV}.

\begin{theorem}\label{theorem:withoutFlinear1}  For any direction $d\notin \{2j\pi-\arg a_2(0,0), j=0,1,2,\cdots,k-1\}$ and $R>0$, $\varepsilon>0$ sufficiently small, then the 
equation \eqref{equation:withoutFlinear} has a solution $u(t,x)$ which is analytic in the domain $V^{(q,\beta;k)}(R;d,\epsilon)$.
\end{theorem}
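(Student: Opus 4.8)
The plan is to deduce Theorem~\ref{theorem:withoutFlinear1} from the reduced form \eqref{equation:vu} together with the $k$-summability established in Theorem~\ref{theorem:yes}, and then to transport the $k$-sum back through the analytic shift furnished by Theorem~\ref{theorem:withoutFlinear}. First I would invoke Theorem~\ref{theorem:withoutFlinear}: since $k\ge1$, $a_2(0,0)\ne0$ and $a_3(t,0)\ne0$, there is an analytic $f$ with $f(0)=0$ solving \eqref{transf}, and in the variables $z=x+f(t)$, $w(t,z)=u(t,z-f(t))$ the equation \eqref{equation:withoutFlinear} takes the form \eqref{equation:vu} with $\tilde c(0)=a_2(0,0)$ and $\tilde b(0)=b\notin\NN^*$. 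I would also record, from $a_3(t,0)t=\beta t^q+O(t^{q+1})$ and the leading-order balance in \eqref{transf} (the term $a_2y^{k+1}$ being of order $t^{q(k+1)}$, hence negligible), that $\val(f)=q$ with $f(t)=f_qt^q+O(t^{q+1})$, $f_q\ne0$; the leading monomial in $t$ is the only feature of $f$ that the conical domain $V^{(q,\beta;k)}(R;d,\epsilon)$ will see.

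Next I would apply the general-$k$ conclusion of Theorem~\ref{theorem:yes} (equivalently Theorem~\ref{theorem:yeskcoeff}) to the reduced equation. For every direction $d$ outside the singular set $SD_{\tilde b,\tilde c;k}$ attached to $\tilde b(0)=b$ and $\tilde c(0)=a_2(0,0)$, the unique formal solution $\hat w$ is $k$-summable, and its $k$-sum $w=\CS_k^d\hat w\in\GG_d^{(k)}(\OO_0)$, with $\OO_0=\CC\{t\}$, is analytic on a product $D(0;r)\times V^{(k)}(R_0;d,\epsilon_0)$. The point to verify here is that, for the present equation, the obstructed directions collapse to the stated finite set: since $\tilde c(0)=a_2(0,0)$, the element $z=1/\tilde c(0)$ of \eqref{equation:SDbck} contributes exactly the $k$ directions $\{2j\pi-\arg a_2(0,0):0\le j\le k-1\}$ after the $k$-ramification, while by the singularity analysis of Lemma~\ref{lemma:singularities} the remaining singular rays $L_n$, rooted at $\frac{n-\tilde b}{\tilde c}$, have arguments tending to $\arg(1/\tilde c(0))$ as $n\to\infty$. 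I would use this accumulation to argue that, once $d$ avoids the directions $\{2j\pi-\arg a_2(0,0)\}$, a whole sub-sector $S(d,\theta)$ fits inside the analyticity domain $\Omega_\infty$ of the Borel transform, so that the $k$-sum $w$ does exist in each such $d$.

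Finally I would set $u(t,x):=w(t,x+f(t))$ and check this is the sought analytic solution. The construction of $f$ in Theorem~\ref{theorem:withoutFlinear} guarantees that $u$ solves \eqref{equation:withoutFlinear}, so only analyticity on the conical domain remains. After shrinking $r,R_0,\epsilon_0$ slightly, it suffices to show that $(t,x)\in V^{(q,\beta;k)}(R;d,\epsilon)$ forces $t\in D(0;r)$ and $z=x+f(t)\in V^{(k)}(R_0;d,\epsilon_0)$. This is precisely what the definition of $V^{(q,\beta;k)}(R;d,\epsilon)$ is engineered for: its characteristic quantity is built from the same monomial $t^q$ that governs the leading part of $f$, and the difference $f(t)-f_qt^q=O(t^{q+1})$ is a higher-order correction in $t$. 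Since $V^{(k)}$ has openness strictly larger than $\pi/k$, this perturbation keeps $z$ inside the slightly wider admissible sector provided $R$ and $\epsilon$ are taken small enough; composing with the analyticity of $w$ then gives $u\in\OO\bigl(V^{(q,\beta;k)}(R;d,\epsilon)\bigr)$.

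The step I expect to be the main obstacle is this last geometric matching, carried out uniformly in $t$: one must control the shift $x\mapsto x+f(t)$ for all $t$ with $|t|<R$ so that $z$ stays away from the boundary rays of $V^{(k)}$ even when $x$ approaches the edge of its own sector, while simultaneously keeping $|z|<R_0$. A secondary but genuine point is the bookkeeping on obstructed directions: one must confirm, through Lemma~\ref{lemma:singularities}, that after the reduction no singular ray other than those accumulating at $\arg(1/a_2(0,0))$ can meet the relevant sub-sector, so that the clean exclusion $\{2j\pi-\arg a_2(0,0)\}$ indeed suffices.
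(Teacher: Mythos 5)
Your route is the paper's own route -- reduce \eqref{equation:withoutFlinear} to the form \eqref{equation:vu} by the shift $z=x+f(t)$ of Theorem~\ref{theorem:withoutFlinear}, apply Theorem~\ref{theorem:yes} to $\hat w(t,z)=\hat u(t,z-f(t))$, conclude with Theorem~\ref{theorem:ksummableexpansion}~\eqref{item:ksummableexpansion1}, and transform back -- but the two points you rightly isolate as the crux are patched with arguments that do not work. The first is the direction bookkeeping. You claim that once $d$ avoids the $k$ accumulation directions $\{2j\pi-\arg a_2(0,0)\}$, ``a whole sub-sector $S(d,\theta)$ fits inside $\Omega_\infty$''. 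This is false: $\Omega_\infty=\CC\setminus\bigcup_{n\ge1}L_n$, where $L_n$ is the ray issued from $\xi_n=(n-\tilde b)/\tilde c$, and as the paper itself records in \S\ref{subsection:Stokes}, one has $S(d,\theta)\subset\Omega_\infty$ for some $\theta>0$ \emph{if and only if} $d\notin SD_{\tilde b,\tilde c;1}$, i.e.\ the full countable set. Here $\tilde b(0)=\partial_ug(0,0,0)$ is not assumed to vanish, so the points $\xi_n$ do not lie on a single ray; a direction $d$ can therefore avoid all accumulation directions and still coincide with some $\arg\xi_n$. For such a $d$ the Borel transform is singular along the bisecting ray of every $S(d,\theta)$, and neither lateral sum is analytic on a sector of opening larger than $\pi/k$ bisected by $d$, so the $k$-sum needed for your last step simply does not exist. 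What your argument (and the paper's equally terse one, which shares this imprecision) actually proves is the statement for $d\notin SD_{\tilde b,\tilde c;k}$; the ``collapse'' to the finite set occurs only in the special case $\tilde b(0)=0$.

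The second gap is the geometric matching, and it is twofold. (i) Comparing coefficients of $t^q$ in \eqref{transf} gives $qf_q=\beta$, i.e.\ $f_q=\beta/q$; this is not $1/\beta$, and you never verify any relation between $f_q$ and the monomial $t^q/\beta$ used to define $V^{(q,\beta;k)}(R;d,\epsilon)$. Hence $f(t)-t^q/\beta$ is of order $t^q$, not $t^{q+1}$, and the cone in the statement differs from the cone carried by $f$ already at leading order. (ii) Even granting matching leading terms, your absorption claim fails near the translated vertex. At points of $V^{(q,\beta;k)}(R;d,\epsilon)$ where $w:=x+t^q/\beta$ satisfies $|w|\ll|t|^{q+1}$ -- such points exist for \emph{every} $R,\epsilon>0$ -- one has $z=x+f(t)=w+g(t)\approx g(t)$, where $g(t)=f(t)-t^q/\beta$, and $\arg g(t)$ sweeps all of $[0,2\pi)$ as $\arg t$ does; consequently $z$ exits any fixed sector $V^{(k)}(R_0;d,\epsilon_0)$, and no shrinking of $R$ and $\epsilon$ restores the bound $|g(t)|\lesssim|w|$ that your sector-widening argument implicitly requires. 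The composition $u(t,x)=w(t,x+f(t))$ is analytic, with no estimate at all, on the cone $\{(t,x):|t|<R,\ x+f(t)\in V^{(k)}(R;d,\epsilon)\}$ defined by $f$ itself; the entire unproved content of the theorem as stated is the replacement of $f(t)$ by its monomial model $t^q/\beta$ in the definition of that cone, and your proposal does not supply a valid argument for this replacement.
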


\begin{proof}
It follows from Theorem~\ref{theorem:withoutFlinear}. In this case, we can apply the result of   Theorem~\ref{theorem:yes} to the power series of the solution $\hat w(t,z)=\hat u(t,z-f(t))$, with $f(t)=t^q/\beta+O(t^{q+1})$. Thus one can complete the proof of Theorem~\ref{theorem:withoutFlinear1} by using the result of Theorem~\ref{theorem:ksummableexpansion}~\eqref{item:ksummableexpansion1}.
\end{proof}

We may notice that $z=0$ is the singular surface of the solution $w(t,z)$, that is to say  $x=-f(t)$ is the singular surface of solution $u(t,x)$. In fact, one can prove that $(t,-f(t),u(t,-f(t))$ is the characteristics of the equation \eqref{equation:withoutFlinear}. Namely we have following remark.

\begin{remark}
For the semilinear singular equation \eqref{equation:withoutFlinear}, the singularity at the origin propagates along the characteristics of this singular PDEs.
\end{remark}

\subsection{General cases}\label{subsection:withoutFgeneral}
Instead of holomorphic transformation \eqref{tran1}, we introduce the following {\it singular} transformation:
 \begin{equation}\label{tran2}
 \tau=\frac tx,\qquad
 w(\tau,x)=u(x\tau,x)\,,
\end{equation}
thus we have following obvious relations 
\begin{equation}\label{tranu2}
 t\partial_t  u=\tau\partial_\tau w, \qquad
\displaystyle x\partial_x u=x\partial_x w-\tau\partial_\tau w.
\end{equation}

\begin{theorem}\label{theorem:withoutFgeneral}
Under the only assumption that $b(0)\notin \NN^*$, then there is a unique formal solution $\hat u(t,x)$ for every equation \eqref{equation:1}. If we set $\hat w(\tau,x)=\hat u(\tau x,x)$, then $\hat w(\tau,x)$ is  $k$-summable with holomorphic parameter $\tau$ at $0$ in all
 directions of
the $x$-plane except at most a countable directions as those given in Theorem~\ref{theorem:yes}.
\end{theorem}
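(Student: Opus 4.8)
The plan is to deduce everything from Theorem~\ref{theorem:yes} (and, where the new coefficients are only $k$-summable in a sector, its extension Theorem~\ref{theorem:yeskcoeff}) applied to the equation satisfied by $\hat w$, with the roles of the two variables exchanged: $\tau$ plays the part of the convergent parameter and $x$ that of the summability variable. First, the existence and uniqueness of the formal solution $\hat u(t,x)=\sum_{n\ge0}u_n(t)x^n$, with $u_n\in\CC\{t\}$ and $u_n(0)=0$, is already guaranteed by the sole hypothesis $b(0)\notin\NN^*$ (see the discussion opening Section~\ref{section:conditionF}, after \cite[Corollary 2.2]{CLT}); hence $\hat w(\tau,x):=\hat u(\tau x,x)$ is a well-defined formal series. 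Writing $u_n(t)=\sum_{m\ge1}u_{n,m}t^m$ gives $\hat w=\sum_{n\ge0,\,m\ge1}u_{n,m}\,\tau^m x^{n+m}$, so every monomial of $\hat w$ satisfies $\deg_x\ge\deg_\tau$; equivalently, the coefficient of $\tau^m$ has $x$-valuation at least $m$. This grading is what will make the forthcoming singular substitution legitimate.

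Next I would substitute \eqref{tran2} into \eqref{equation:1} using the relations \eqref{tranu2}, namely $t\partial_t u=\tau\partial_\tau w$ and $x\partial_x u=x\partial_x w-\tau\partial_\tau w$. Since $x^{k+1}c(x)\partial_x u=x^kc(x)\,(x\partial_x w-\tau\partial_\tau w)$, the linear occurrence of $\tau\partial_\tau w$ on the right can be moved to the left and absorbed by dividing through the unit $1+x^kc(x)$. The resulting equation for $w$ has, in the variables $(\tau,x)$, a linear part of exactly the shape of \eqref{equation:1}, with the origin values preserved: $\tilde b(0)=b(0)=b$ and $\tilde c(0)=c(0)=c$. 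Consequently the relevant singular set is again $SD_{b,c;k}$ of \eqref{equation:SDbck}, which is precisely why the exceptional directions are those of Theorem~\ref{theorem:yes}. The goal is then to bring this equation to the good form \eqref{equation:vu}, by the preparation used in Proposition~\ref{proposition:conditionF} (subtracting finitely many terms of the formal solution so that the valuation conditions \eqref{equation:valuationa} hold and the nonlinearity is expressed through $x\partial_x w$), after which Theorem~\ref{theorem:yes}, respectively Theorem~\ref{theorem:yeskcoeff}, gives $\hat w\in\OO_0\{x\}_k^d$ with $\OO_0=\CC\{\tau\}$ for every $d\notin SD_{b,c;k}$; Theorem~\ref{theorem:ksummableexpansion} then packages this as the claimed holomorphic-parameter $k$-summability.

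The hard part will be the legitimacy of the \emph{singular} substitution \eqref{tran2}: in contrast with the holomorphic change of Theorem~\ref{theorem:withoutFlinear}, the map $\tau=t/x$ feeds into the transformed equation both negative powers of $x$ and derivatives $\partial_\tau w$ of the unknown, neither of which is allowed in the canonical forms \eqref{equation:1} or \eqref{equation:vu}. Two observations should dispose of this. First, every troublesome factor arises through $(\partial_x u)^\alpha=x^{-\alpha}(x\partial_x w-\tau\partial_\tau w)^\alpha$, and $x\partial_x w-\tau\partial_\tau w$ is nothing but $x\partial_x u$, which has $x$-valuation $\ge1$; its $\alpha$-th power absorbs the prefactor $x^{-\alpha}$, so that although the rewritten equation looks singular, every quantity actually entering the recursion \eqref{equation:utx*n} has nonnegative $x$-valuation---this is exactly where the grading $\deg_x\ge\deg_\tau$ of $\hat w$ is used. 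Second, because $\tau\partial_\tau$ is diagonal with respect to the powers of $\tau$, expanding $w=\sum_n\tilde w_n(\xi)\tau^n$ in the Borel plane (by $\BT\circ\rho_k$ in the variable $x$) still yields a triangular recursion: the $\partial_\tau w$ contributions involve only the already-estimated lower-order coefficients, and they are dominated by the Nagumo-norm estimates of Proposition~\ref{proposition:convolution}, Lemma~\ref{lemma:key} and Corollary~\ref{cor:key}, the spaces $\EE_d^{(k)}(\OO_0)$ being stable under $\partial_\tau$ by Proposition~\ref{proposition:ksummablederivative}. Once these two points are secured, the contraction-mapping and majorant-series argument of Sections~\ref{section:proofk=1} and~\ref{section:proofgeneralcase} transfers with only notational changes and closes the proof.
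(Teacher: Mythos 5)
Your proposal founders precisely at the step you yourself call ``the hard part'', and its opening claim is also incorrect. Take the substitution first. Plugging $t=\tau x$ directly into \eqref{equation:1} turns each nonlinear term $a_{i,j,\alpha}(x)\,t^iu^j(\partial_xu)^\alpha$ into
$$
a_{i,j,\alpha}(x)\,\tau^i\,x^{i-\alpha}\,w^j\,(x\partial_xw-\tau\partial_\tau w)^\alpha,
$$
so whenever $i<\alpha$ and $a_{i,j,\alpha}(0)\neq0$ (e.g.\ the term $t(\partial_xu)^2$ of \eqref{equation:no}, which becomes $\tau x^{-1}(x\partial_xw-\tau\partial_\tau w)^2$) the transformed equation has coefficients with a genuine pole at $x=0$. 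Your ``absorption'' argument is a statement about the formal solution: indeed $(x\partial_x\hat u)^\alpha$ has $x$-valuation $\ge\alpha$, so the composed series lies in $\CC[[\tau,x]]$. But Theorems~\ref{theorem:yes} and~\ref{theorem:yeskcoeff}, and all the machinery behind them (Borel transforms of the coefficients, the convolution equation \eqref{equation:utx*n}, the contraction argument of Lemma~\ref{lemma:leg2} in the spaces $\EE_{S,\mu,n}$), require the right-hand side $F(\tau,x,w,v)$ to be holomorphic, or $k$-summable in $x$, in all four slots; a coefficient with a pole in $x$ has no Borel transform, so the convolution equation on which the recursion rests cannot even be written, and you offer no substitute fixed-point argument that would track valuations instead. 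The paper closes exactly this gap by a step that is entirely absent from your plan: it first proves, by induction on $n$, that each coefficient of $\hat u=\sum_{n\ge1}\hat u_n(x)t^n$ solves a linear singular ODE and is therefore $k$-summable outside at most $n$ directions; it then replaces $u$ by $u_1(x)t+u_2(x)t^2+t^2u$, where $u_1,u_2\in\GG_d^{(k)}$ are the $k$-\emph{sums}, producing equation \eqref{gen1}, in which $\partial_xu$ occurs only in the combinations $x^{k+1}\partial_xu$ and $t\partial_xu$. Only after this preparation is $t=\tau x$ substituted: then $t\partial_xu=\tau(x\partial_xw-\tau\partial_\tau w)$ and no negative power of $x$ ever appears; the implicit function theorem (a cleaner version of your ``triangular recursion'' remark) solves for $\tau\partial_\tau w$, and Theorem~\ref{theorem:yeskcoeff} finishes the proof.

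Second, your starting point is false: the hypothesis $b(0)\notin\NN^*$ alone does not give $\hat u=\sum_{n\ge0}u_n(t)x^n$ with $u_n\in\CC\{t\}$. The passage you cite at the opening of Section~\ref{section:conditionF} rests on \cite[Corollary 2.2]{CLT} and is made in the context of condition $(F)$; without $(F)$ the recursion on the $x$-coefficients is \emph{anticipative} ($u_n$ requires $u_{n+1}$; see \eqref{equation:proofnoformalun}), and the coefficients are in general divergent in $t$: for equation \eqref{equation:no}, where $b\equiv0$, the Gevrey order of the solution in $(s,x)=(t^2,x)$ is exactly $(1,1)$ and a $2$-Borel summation in $t$ is unavoidable --- that is the whole content of Theorem~\ref{theorem:no}. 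This is why the paper re-proves existence and uniqueness by identifying the coefficients of $t^n$ (each $\hat u_n(x)$ solves $(n-b(x))\hat u_n-\gamma(x)\hat u_n'=F_n$, uniquely solvable in $\CC[[x]]$ since $n-b(0)\neq0$) rather than by expanding in $x$. Your grading $\deg_x\ge\deg_\tau$ of $\hat w$ survives, since it only needs $\hat u\in\CC[[t,x]]$, but the convergence claim does not, and with it goes the possibility of performing the preparation of Proposition~\ref{proposition:conditionF} in the form you describe, since that preparation subtracts a function $v(t,x)$ that must be holomorphic at the origin.
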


\begin{proof}
The existence and uniqueness of the  formal solution can be directly verified by the elementary computations. In fact, if one puts $\sum_{n\ge 1}\hat u_n(x)t^n$ in both sides of the equation \eqref{equation:1} and then identifies all coefficients of $t^n$ to get $\hat u_n(x)$; so, $\hat u(t,x)=\sum\limits^\infty_{n=1}\hat u_n(x)t^n$ will 
be the formal solution of the equation \eqref{equation:1}. Next, for each coefficient $\hat u_n$ which  will satisfy a ODE, thus, by induction on $n$, we can prove that for any given positive integer $n$, $\hat u_n$ is $k$-summable in all direction except at most for $n$ directions of $x$-plane. Given a direction $d\notin DS_{b,c;k}$, let $u_n\in\GG^{(k)}_d$ be the $k$-sum of $\hat u_n$; replacing $u(t,x)$ by $u_1(x)t+u_2(x)t^2+t^2u(t,x)$ may transform the equation \eqref{equation:1} into the following form:
\begin{eqnarray}\label{gen1}
&&t\partial_{t}u=a(x)t+b(x)u+c(x)x^{k+1}\partial _{x}u+h(x)t\partial_xu\\
&&\qquad\qquad+\sum_{i+j+\alpha\ge
2}a_{i,j,\alpha}(x)t^iu^j(t\partial_xu)^\alpha,\qquad u(0,x)=0,
\nonumber
\end{eqnarray}
where $a(x)$, ..., $a_{i,j,\alpha}(x)$ belong to $\GG_d^{(k)}$. Moreover, the right hand side in \eqref{gen1} can be written as $F(t,x,u,\partial_xu)$ with $F\in \GG_d^{(k)}(\OO_0)$, where $\OO_0=\CC\{t,u,\partial_xu\}$.

From the relations \eqref{tran2} and \eqref{tranu2}, the equation \eqref{gen1} becomes that
\begin{eqnarray*}\label{gen1t}
&&\tau\partial_{\tau}w=a(x)x\tau+b(x)w+c(x)(x^{k+1}\partial _{x}w-x^{k}\tau\partial _{\tau}w)\\
&&+h(x)(x\partial_xw-\tau\partial_\tau w)\tau+\sum_{i+j+\alpha\ge
2}a_{i,j,\alpha}(x)(x\tau)^iw^j(\tau x\partial_xw-\tau^2\partial_\tau w)^\alpha.
\nonumber
\end{eqnarray*}
By implicit function theorem, this equation can be rewritten as a partial differential equation such as $\tau\partial_\tau w=F(\tau,x,w,x\partial_xw)$ and then the proof of Theorem~\ref{theorem:withoutFgeneral} can be deduced directly by the result of  Theorem~\ref{theorem:yeskcoeff}.
\end{proof}

Applying the result of Theorem~\ref{theorem:ksummableexpansion} \eqref{item:ksummableexpansion2} we have following corollary, which implies Theorem~\ref{theorem:all} is true.

\begin{cor}\label{rethgen}
 If $b(0)\notin\NN^*$, then for any direction $d\notin SD_{b,c;k}$, there exists a sector $V^{(k)}(R;d,\epsilon)$ with $R>0$ and $\epsilon>0$, such that equation \eqref{equation:1} has a solution $u(t,x)$ which is analytic in the domain $\bigl\{(t,x)\in\CC\times  V^{(k)}(R;d,\epsilon):|t|<R|x|\bigr\}$. 
\end{cor}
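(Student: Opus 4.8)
The plan is to read Corollary~\ref{rethgen} as the ``analytic-function'' shadow of the summability statement already proved in Theorem~\ref{theorem:withoutFgeneral}, recovering the solution of \eqref{equation:1} from its $k$-sum through the singular substitution $\tau=t/x$. By Theorem~\ref{theorem:withoutFgeneral}, the hypothesis $b(0)\notin\NN^*$ guarantees a unique formal solution $\hat u(t,x)$ of \eqref{equation:1}, and the series $\hat w(\tau,x)=\hat u(\tau x,x)$ is $k$-summable with respect to $x$ in every direction $d\notin SD_{b,c;k}$, with $\tau$ as holomorphic parameter at $0$. So the whole proof reduces to turning this $k$-summability into an honest analytic function and then undoing the change of variables.

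First I would pass to the Borel plane. Applying Theorem~\ref{theorem:ksummableexpansion}~\eqref{item:ksummableexpansion2} to $\hat w$, the formal $k$-Borel transform $\phi$ of $\hat w$ with respect to $x$ lies in $\EE_d^{(k)}(\OO_0)$, where now $\OO_0=\CC\{\tau\}$. Unwinding the definition of that space, this means there exist a neighborhood $U$ of $0$ in the $\tau$-plane, a disc-plus-sector $S=S(R_0;d,\theta)$ and $\mu\in(0,\infty e^{-id})$ such that $\phi(\tau,\cdot)\in\EE^{(k)}_{S,\mu,0}$ uniformly for $\tau$ in compact subsets of $U$; equivalently, $\phi(\tau,\xi)$ is analytic on $U\times S^{(k)}(R_0;d,\theta)$ and grows there at most like an exponential of order $k$ in $\xi$, locally uniformly in $\tau$.

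Next I would return to the $x$-variable by Laplace transform. Applying the $k$-Laplace operator $\LT_k^d$ in the Borel variable $\xi$ produces $w:=\CS_k^d\hat w=\LT_k^d\phi$, analytic on $U\times V^{(k)}(R;d,\epsilon)$ for suitable $R>0$, $\epsilon>0$. Shrinking $U$ to a disc $D(0;\rho)$, I then set $u(t,x):=w(t/x,x)$. The key observation is that $(t/x,x)\in U\times V^{(k)}(R;d,\epsilon)$ holds exactly when $x\in V^{(k)}(R;d,\epsilon)$ and $|t|<\rho|x|$; hence, after replacing $R$ by $\min(R,\rho)$, the composite $u$ is analytic on the conical domain $\{(t,x)\in\CC\times V^{(k)}(R;d,\epsilon):|t|<R|x|\}$, which is precisely the region claimed.

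Finally I would verify that this $u$ genuinely solves \eqref{equation:1}. In the proof of Theorem~\ref{theorem:withoutFgeneral}, $w$ arises as the $k$-sum of the solution of the equation obtained from \eqref{equation:1} under $\tau=t/x$, using the relations $t\partial_t u=\tau\partial_\tau w$ and $x\partial_x u=x\partial_x w-\tau\partial_\tau w$. Since $(t,x)\mapsto(t/x,x)$ is a biholomorphism of $\{x\neq 0\}$ onto its image and the differential identity holds for the formal series, it persists for the analytic functions $w$ and $u$ by uniqueness of the $k$-sum. The step I expect to be most delicate is the geometric one: ensuring that $\tau=t/x$ stays inside the \emph{fixed} parameter disc $U$ uniformly as $x\to 0$ within the sector, so that $u$ is analytic on the entire cone $|t|<R|x|$ rather than on a smaller shrinking region, and, relatedly, confirming that the PDE identity transfers from the formal to the analytic level across the singular substitution instead of holding only asymptotically.
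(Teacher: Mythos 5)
Your proposal is correct and follows essentially the same route as the paper: the paper's own proof simply invokes Theorem~\ref{theorem:withoutFgeneral} together with Theorem~\ref{theorem:ksummableexpansion}~\eqref{item:ksummableexpansion2} to build the $k$-sum $w(\tau,x)$ by $k$-Borel--Laplace transformation and then recovers $u(t,x)=w(t/x,x)$ on the cone $\{|t|<R|x|\}$, exactly as you do. Your added details --- the uniform exponential bound in the Borel plane on a fixed parameter disc $U$, the identification $(t/x,x)\in U\times V^{(k)}(R;d,\epsilon)\Leftrightarrow |t|<\rho|x|$, and the transfer of the PDE identity from the formal to the analytic level via the differential-algebra property and injectivity of the Gevrey asymptotic expansion (Proposition~\ref{proposition:ksummablederivative} and Theorem~\ref{theorem:yeskcoeff}) --- are precisely the steps the paper leaves implicit.
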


\begin{proof}
By using $k$-Borel-Laplace transformation, one can construct an analytic solution from the formal power series $\hat w(\tau,x)$ of Theorem~\ref{theorem:withoutFgeneral}; see Theorem~\ref{theorem:ksummableexpansion} \eqref{item:ksummableexpansion2}.
\end{proof}

\section{Theorem~\ref{theorem:no} and summability in both variables}\label{section:proofno}

In the previous section, the proofs of Theorems~\ref{theorem:withoutFlinear} and~\ref{theorem:withoutFgeneral} depended on the special changes of variables, in which we can use 
the idea in the proofs of Theorem~\ref{theorem:yes} and Theorem~\ref{theorem:yeskcoeff} to get the results. In this section, we shall study a kind of different nonlinear singular equation \eqref{equation:no}, in which the condition ($F$) is not satisfied. Here we shall give the proof of Theorem~\ref{theorem:no}. 

The nonlinear singular equation \eqref{equation:no} is a quasilinear equation with anticipative factors, we shall discuss this problem in \S~\ref{subsection:proofnoformal}. The proof of Theorem~\ref{theorem:no} will be given in \S~\ref{subsection:proofno}, which depends on Maillet-Malgrange Theorem \cite{Ma} and some nonlinear Fuchsian ODE with coefficients in Gevrey class.

\subsection{Formal anticipative aspects}\label{subsection:proofnoformal}
Suppose the coefficient $a(x)$ of the equation \eqref{equation:no}, satisfying  $a(x)=a_0+a_1x+a_2x^2+...$. Also we expand the unknown function $u(t,x)$ as the form $u_0(t)x+u_1(t)x^2+...$, then from the equation \eqref{equation:no}, we have following relations (for all $n\ge 0$ and $u_{-1}(t)=0$):
\begin{equation}\label{equation:proofnoformalun}
t\partial_tu_n(t)=a_nt+(n-1)u_{n-1}(t)+t\sum_{\ell=1}^{n+1}\ell(n+2-\ell)\,u_\ell(t)\,u_{n+2-\ell}(t)\,,
\end{equation}

In some sense, this system may be called to be {\it anticipative}, that is to say, to determinate the term $u_n(t)$ we need to know the term $u_{n+1}(t)$.  

Since $u(0,x)=0$, it follows that $u_n(0)=0$ for all integer $n$; thus one can deduce from \eqref{equation:proofnoformalun} that
\begin{equation}\label{equation:proofnoformalun0}\partial_tu_n(0)=a_n+(n-1)\partial_t u_{n-1}(0),
\quad
2\partial_t^2u_n(0)=(n-1)\partial_t^2 u_{n-1}(0)
\end{equation}
and so on  $\cdots$.

\begin{proposition}\label{proposition:proofnoformal} For sequence $u_n(t)$, given in \eqref{equation:proofnoformalun}, with initial condition $u_n(0)=0$ for all integer $n$, then the following relations hold for all positive integer $m$ and all non-negative integer $n$:
\begin{equation}\label{equation:proofnoformal2m}
\partial^{2m}_tu_n(0)=0
\end{equation}
and
\begin{equation}\label{equation:proofnoformal2m+1}
\partial_t^{2m+1}u_n(0)=\sum_{\ell=1}^{n+1}\sum_{j=0}^{m-1}\ell(n+2-\ell)\binom{2j+1}{2m}\,U_{n,\ell}^{m,j}\,,
\end{equation}
where 
$$U_{n,\ell}^{m,j}=\partial_t^{2j+1}u_\ell(0)\,\partial_t^{2(m-j)-1}u_{n+2-\ell}(0).$$
\end{proposition}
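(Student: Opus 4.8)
The plan is to differentiate the recurrence \eqref{equation:proofnoformalun} repeatedly in $t$, evaluate at $t=0$, and combine the initial conditions $u_n(0)=0$ with an induction on the parity of the order. The basic tool is the Leibniz identity $\partial_t^p\bigl(t\,g(t)\bigr)\big|_{t=0}=p\,g^{(p-1)}(0)$, valid because $\partial_t^i t=0$ for $i\ge 2$. Applied to the left-hand side $t\partial_t u_n$ (with $g=\partial_t u_n$) it produces $p\,\partial_t^p u_n(0)$, and applied to the quadratic term $t\,S(t)$, with $S(t)=\sum_{\ell=1}^{n+1}\ell(n+2-\ell)\,u_\ell(t)u_{n+2-\ell}(t)$, it produces $p\,\partial_t^{p-1}S(0)$. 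Since $\partial_t^p(a_nt)$ vanishes at $0$ for $p\ge 2$, I obtain for every $p\ge 2$ the master relation
\begin{multline*}
p\,\partial_t^p u_n(0)=(n-1)\,\partial_t^p u_{n-1}(0)\\
+p\sum_{\ell=1}^{n+1}\ell(n+2-\ell)\sum_{q=1}^{p-2}\binom{p-1}{q}\,\partial_t^q u_\ell(0)\,\partial_t^{p-1-q}u_{n+2-\ell}(0),
\end{multline*}
where the inner sum already starts at $q=1$ and stops at $q=p-2$ because the two extreme terms carry a factor $u_\ell(0)$ or $u_{n+2-\ell}(0)$, both zero.

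I would first prove \eqref{equation:proofnoformal2m} by induction on $m$, as this vanishing is what drives the whole argument. Fix $p=2m$ and assume $\partial_t^{2r}u_j(0)=0$ for all $r<m$ and all $j$. In each surviving product the two orders $q$ and $2m-1-q$ add up to the odd number $2m-1$, so exactly one of them is even and lies in $\{1,\dots,2m-2\}$; by the induction hypothesis that factor vanishes, and the whole convolution sum dies. The master relation then collapses to $2m\,\partial_t^{2m}u_n(0)=(n-1)\,\partial_t^{2m}u_{n-1}(0)$, and a secondary induction on $n$, with base $n=0$ using $u_{-1}\equiv 0$, forces $\partial_t^{2m}u_n(0)=0$ for every $n$; this closes the induction on $m$.

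For \eqref{equation:proofnoformal2m+1} I would put $p=2m+1$ and feed in the vanishing of all even-order derivatives just obtained. Now the two orders $q$ and $2m-q$ in each product add up to the even number $2m$, so a product can survive only when \emph{both} are odd; setting $q=2j+1$ with $j=0,\dots,m-1$ converts the convolution sum into $\sum_{\ell=1}^{n+1}\ell(n+2-\ell)\sum_{j=0}^{m-1}\binom{2m}{2j+1}\,\partial_t^{2j+1}u_\ell(0)\,\partial_t^{2(m-j)-1}u_{n+2-\ell}(0)$, which is precisely the asserted sum of the terms $U_{n,\ell}^{m,j}$.

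The parts I expect to demand the most care are the parity bookkeeping inside the Leibniz expansion — pinning down exactly which $(q,\,p-1-q)$ splittings survive once even-order derivatives are known to vanish — and the handling of the linear-in-$n$ term $(n-1)\partial_t^p u_{n-1}(0)$: in the even case it is absorbed by the inner induction on $n$ (giving $0$), while in the odd case it must be carried along through the same induction on $n$ (or unrolled using $u_{-1}\equiv 0$). I would also verify the convention that the binomial weight in \eqref{equation:proofnoformal2m+1} is to be read as $\binom{2m}{2j+1}$, with $2m$ on top, so that it agrees with the Leibniz coefficient $\binom{p-1}{q}$ produced above.
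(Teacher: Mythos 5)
Your master relation is correct, and your proof of the even-order statement \eqref{equation:proofnoformal2m} is complete and follows essentially the same route as the paper's own (very terse) proof: differentiate \eqref{equation:proofnoformalun} at $t=0$, induct on $m$ with the parity argument, and kill the term $(n-1)\partial_t^{2m}u_{n-1}(0)$ by an inner induction on $n$, where it multiplies quantities already known to vanish. Your reading of the binomial symbol (the paper's $\binom{2j+1}{2m}$ meaning ``$2m$ choose $2j+1$'') also agrees with the convention used elsewhere in the paper, e.g.\ in Proposition~\ref{proposition:ksummablederivative}.

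The odd-order step, however, contains a genuine gap, and it is exactly the point you flagged and then deferred. For $p=2m+1$ your own master relation reads
\begin{equation*}
\partial_t^{2m+1}u_n(0)=\frac{n-1}{2m+1}\,\partial_t^{2m+1}u_{n-1}(0)
+\sum_{\ell=1}^{n+1}\sum_{j=0}^{m-1}\ell(n+2-\ell)\binom{2m}{2j+1}\,U_{n,\ell}^{m,j}\,,
\end{equation*}
so the asserted identity \eqref{equation:proofnoformal2m+1} follows only if $(n-1)\partial_t^{2m+1}u_{n-1}(0)=0$. In the even case the analogous term is harmless because everything propagated by the induction on $n$ vanishes; in the odd case the propagated quantities are generically nonzero, so ``carrying the term along'' or ``unrolling with $u_{-1}\equiv0$'' cannot remove it --- it changes the final formula. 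Concretely, take $m=1$. By \eqref{equation:proofnoformalun0} one has $\partial_tu_1(0)=a_1$, $\partial_tu_2(0)=a_1+a_2$, $\partial_tu_3(0)=a_3+2a_1+2a_2$, and your relation at $n=1$ gives $\partial_t^3u_1(0)=8a_1(a_1+a_2)$; at $n=2$ it then gives
\begin{equation*}
\partial_t^3u_2(0)=\tfrac{8}{3}\,a_1(a_1+a_2)+12\,a_1(a_3+2a_1+2a_2)+8\,(a_1+a_2)^2,
\end{equation*}
whereas \eqref{equation:proofnoformal2m+1} asserts the same value without the first summand, which is nonzero for generic $a_1,a_2$. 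Hence for $n\ge 2$ your closing sentence (``which is precisely the asserted sum'') is a non sequitur: what your computation actually establishes is the corrected identity displayed above, and the clean formula \eqref{equation:proofnoformal2m+1} holds only for $n=0,1$, where the factor $n-1$ or the convention $u_{-1}\equiv 0$ kills the extra term. This defect is inherited from the paper itself: its proof is the same differentiation-plus-induction sketch and silently drops the $(n-1)$-term, both in \eqref{equation:proofnoformal2m+1} and in the later relation \eqref{equation:proofnoformalwnm}. To repair your write-up, prove the corrected identity (which your argument already does, and which still yields the majorations needed later in \S\ref{subsection:proofno}), and state explicitly that \eqref{equation:proofnoformal2m+1} requires the additional term $\frac{n-1}{2m+1}\partial_t^{2m+1}u_{n-1}(0)$ when $n\ge 2$.
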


\begin{proof}
From the formula \eqref{equation:proofnoformalun0}, we can deduce the formula \eqref{equation:proofnoformal2m} for $m=1$. Also, by a direct computation, we can get the proof of  \eqref{equation:proofnoformal2m+1} for $m=1$.  Next, by induction on $m$  for $m>1$, we can use the operator $\partial_t^{2m-1}$ or $\partial_t^{2m}$ on both sides of \eqref{equation:proofnoformalun}, 
which will deduce the required formulae \eqref{equation:proofnoformal2m} and \eqref{equation:proofnoformal2m+1}.
\end{proof}

From Proposition~\ref{proposition:proofnoformal}, we may notice that the formal solution $\hat u(t,x)$ belongs to the space $t\CC[[t^2,x]]$, which leads us to introduce the following transformation:
\begin{equation}\label{equation:proofnoformals}
s=t^2,\qquad
w(s,x)=tu(t,x)\,,
\end{equation}
thus equation \eqref{equation:no} becomes 
\begin{equation}\label{equation:proofnoformalw}
2s\partial_sw=a(x)s+w+x^2\partial_xw+(\partial_xw)^2,\quad w(0,x)=0\,.
\end{equation}
Furthermore, the relations \eqref{equation:proofnoformalun0} and \eqref{equation:proofnoformal2m+1} imply that, if we set $w(s,x)=w_0(s)+w_1(s)x+...$, then:
\begin{equation}\label{equation:proofnoformalwn}
\partial_sw_n(0)=a_n+(n-1)\partial_s w_{n-1}(0)
\end{equation}
and, for $m\ge 1$,
\begin{equation}\label{equation:proofnoformalwnm}
\frac{2m+1}{(m+1)!}\partial_s^{m+1}w_n(0)=\sum_{\ell=1}^{n+1}\sum_{j=0}^{m-1}\frac{\ell(n+2-\ell)}{m-j+1}\,W_{n,\ell}^{m,j}\,,
\end{equation}
where 
$$
W_{n,\ell}^{m,j}=\frac{\partial_s^{j+1}w_\ell(0)\,\partial_s^{m-j}w_{n+2-\ell}(0)}{(j+1)!\,(m-j)!}\,.
$$
By induction on $m$, one can express each term $\partial_s^{m+1}w_n(0)$ in terms of $\partial_sw_j(0)$ for $0\le j\le m+n+1$.

\begin{proposition}
Equation \eqref{equation:proofnoformalw} admits a unique formal solution $\hat w(s,x)$ and the Gevrey order of $\hat w(s,x)$ is exactly $(1,1)$. More precisely, if we set $\hat w(s,x)=\sum_{m,n\ge 0}w_{m,n}s^{m+1}\, x^n$, then $$\BT_{1,1}\hat w(s,x):=\sum_{m,n\ge 0}\frac{w_{m,n}}{m!\,n!}\,s^{m+1}\, x^n\in\CC\{s,x\}$$
and $\BT_{1,1}\hat w(s,x)$ is divergent if either $|s|>1$ or $|x|>1$ and $s\not=0$.

Consequently, if $\hat w_n(s)=\sum_{m\ge 0}w_{m,n}s^{m+1}$, then $\hat w_n\in\CC[[s]]_1$.
\end{proposition}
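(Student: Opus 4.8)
The plan is to merge the recursions already obtained in this subsection with a two variable Borel transform argument. First I would record the \emph{master recursion} for the coefficients: writing $\hat w(s,x)=\sum_{m,n\ge0}w_{m,n}s^{m+1}x^n$ and substituting into \eqref{equation:proofnoformalw}, the coefficient of $s^{m+1}x^n$ satisfies, for $m\ge1$,
\[
(2m+1)\,w_{m,n}=(n-1)\,w_{m,n-1}+\sum_{p+q=m-1}\ \sum_{n_1+n_2=n+2}n_1n_2\,w_{p,n_1}\,w_{q,n_2}\qquad(m\ge1),
\]
while $m=0$ returns $w_{0,n}=a_n+(n-1)w_{0,n-1}$, i.e. relation \eqref{equation:proofnoformalwn}. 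Since the quadratic term only involves $s$-indices $p,q\le m-1$, every $w_{m,n}$ is determined from data of strictly smaller $s$-grading; together with \eqref{equation:proofnoformalwnm} and the inductive reduction preceding the statement this resolves the $x$-anticipation and yields at once the existence and uniqueness of $\hat w$.

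Next I would show that the Gevrey order is at most $(1,1)$, i.e. that $W:=\BT_{1,1}\hat w$ is convergent near the origin. Applying $\BT_{1,1}$ to \eqref{equation:proofnoformalw} produces a nonlinear convolution equation (integro-differential in the Borel variables) for $W$: the operator $s\partial_s$ commutes with $\BT_{1,1}$, multiplication by $s$ and by $x$ become integrations, and $(\partial_xw)^2$ becomes a double convolution. One solves this fixed point problem in a Banach algebra of functions analytic on a small polydisc, with convolution estimates in the spirit of Proposition~\ref{proposition:convolution}; equivalently, and more concretely, one proves directly from the master recursion the majorant bound $|w_{m,n}|\le K\,L^{m+n}\,m!\,n!$ by induction, the geometric factors coming from a standard majorant series. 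Either route gives $W\in\CC\{s,x\}$, hence $\hat w\in\CC[[s,x]]_{1,1}$. Moreover, convergence of $W$ forces, for each fixed $n$, the $x^n$-Taylor coefficient $\frac1{n!}\sum_m\frac{w_{m,n}}{m!}s^{m+1}$ to be analytic in $s$, so the \emph{Consequently} clause $\hat w_n\in\CC[[s]]_1$ follows immediately.

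The hard part will be the \emph{sharpness}, that $W$ extends to no polydisc larger than the unit one, which is needed both for the exactness of the order $(1,1)$ and for the stated divergence. A quick corroboration in the $x$-direction is the explicit lowest $s$-slice: the coefficient of $s$ in $W$ is $\sum_n\frac{w_{0,n}}{n!}x^n$ with $\frac{w_{0,n}}{n!}=\frac1n\sum_{j=1}^n\frac{a_j}{(j-1)!}\to(\BT_1 a)'(1)$, whose nonvanishing forces $x$-radius exactly $1$. The genuinely delicate point, and the main obstacle, is that the true growth is \emph{joint}: testing the ansatz $w_{m,n}\asymp m!\,n!$ in the master recursion, the endpoint terms of the two convolutions ($p,q\in\{0,m-1\}$ and $n_1,n_2\in\{1,n+1\}$) reproduce the factors $m!$ and $n!$, and the balance of $(2m+1)w_{m,n}$ against the quadratic term is an $O(1)$ balance precisely along the diagonal $m\sim n$, pinning the geometric rate to $1$ in both variables through the anticipative $x$-tower rather than through any single slice. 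I would therefore establish a lower bound $|w_{m,n}|\ge c\,m!\,n!$ along a near-diagonal sequence by an inductive analysis of the master recursion, the crux being sign control, avoiding cancellation in the quadratic convolution across the coupled $x$-modes, presumably via a positivity normalization (reduction to $a_j\ge0$ by majorization, or tracking a dominant positive subfamily). Once one such sequence is secured, $\BT_{1,1}\hat w$ diverges for $|s|>1$ and for $|x|>1$ with $s\neq0$, so the Gevrey order is exactly $(1,1)$.
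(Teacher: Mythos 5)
Your master recursion, the resolution of the anticipative structure by induction on the $s$-grading, and the majorant argument giving $\BT_{1,1}\hat w\in\CC\{s,x\}$ (hence $\hat w_n\in\CC[[s]]_1$) are all correct, and this part is essentially the paper's own route: relations \eqref{equation:proofnoformalwn} and \eqref{equation:proofnoformalwnm} are exactly your master recursion written slice by slice via $\partial_s^{m+1}w_n(0)=(m+1)!\,w_{m,n}$, and the ``idea of \cite{CLT}'' that the paper invokes is the majorant-series argument you sketch.

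The genuine gap is in the sharpness/divergence part, and it is not only that you leave it as a plan: the mechanism you propose cannot work. You want a lower bound $|w_{m,n}|\ge c\,m!\,n!$ ``presumably via a positivity normalization (reduction to $a_j\ge0$ by majorization)''. Majorization bounds coefficients from above only; replacing $a_j$ by $|a_j|$ changes the solution, and no lower bound for the positive-coefficient solution transfers back, because cancellation in the quadratic convolution is a real phenomenon and can kill the divergence entirely. Concretely, take $a(x)=x-x^2$, which satisfies the standing hypotheses of Theorem~\ref{theorem:no} ($a\not\equiv a(0)$, $\val(a)=1$). Then the unique formal solution of \eqref{equation:proofnoformalw} is the \emph{polynomial}
\begin{equation*}
\hat w(s,x)=sx+\tfrac13\,s^2,
\end{equation*}
as one checks directly: $2s\partial_s\hat w=2sx+\tfrac43 s^2$, while $a(x)s+\hat w+x^2\partial_x\hat w+(\partial_x\hat w)^2=(x-x^2)s+sx+\tfrac13 s^2+x^2s+s^2=2sx+\tfrac43 s^2$. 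Hence $\BT_{1,1}\hat w$ is entire and the claimed divergence for $|s|>1$, or for $|x|>1$ with $s\neq0$, fails; in the same example your ``quick corroboration'' also degenerates, since $\sum_{j\ge1}a_j/(j-1)!=1-1=0$. So no inductive near-diagonal lower bound can be established under the stated hypotheses alone: the exactness half of the Proposition is a \emph{generic} statement, valid only under an additional non-degeneracy condition on $a$ (this is what the paper's one-line appeal to \cite{CLT} silently carries). A correct write-up must either add such a hypothesis and show precisely where it blocks cancellation in the recursion, or weaken the conclusion to convergence of $\BT_{1,1}\hat w$ together with $\hat w_n\in\CC[[s]]_1$, which is all that the rest of Section~\ref{section:proofno} actually uses.
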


\begin{proof}
 A direct proof can be deduced by using the relations of \eqref{equation:proofnoformalwn},  \eqref{equation:proofnoformalwnm} and $\partial^{m+1}_sw_n(0)=(m+1)!w_{m,n}$, and the idea of \cite{CLT}.
\end{proof}

\subsection{Proof of Theorem~\ref{theorem:no}}\label{subsection:proofno} It is easy to see that
Theorem~\ref{theorem:no} is equivalent to following result:

\begin{theorem}\label{theorem:proofnow}   
Let $\hat w(s,x)=\sum_{m\ge 0}\hat v_m(x)s^{m+1}$ be the unique formal solution of equation \eqref{equation:proofnoformalw}. If we set
$$
W(\sigma,x):=\sum_{m\ge 0}\frac{\hat v_{m}(x)}{m!}\,\sigma^m\in\CC\{\sigma\}[[x]],
$$
then for all direction $d\in\SS^1\setminus\{0\}$, it follows that $W(\sigma,x)\in\CC\{x\}^d(\OO_0)$, where $\OO_0=\CC\{\sigma\}$.
\end{theorem}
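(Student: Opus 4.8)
The plan is to deduce the $x$-summability of $W$ from the structure of the equation it inherits after the $s$-Borel transform, thereby reducing the statement to the Borel-plane criteria of Part~1. Write $\BT_s$ for the formal $1$-Borel transform in $s$, so that $W=\BT_s\hat w$. Since $\BT_s$ sends $s\partial_s$ to $\sigma\partial_\sigma+1$, carries ordinary products in $s$ to the convolution $*_\sigma$, and commutes with every operation in $x$, applying $\BT_s$ to \eqref{equation:proofnoformalw} gives
\begin{equation*}
(2\sigma\partial_\sigma+1)\,W=a(x)+x^2\partial_x W+(\partial_x W)*_\sigma(\partial_x W).
\end{equation*}
The Gevrey-$(1,1)$ estimate established just above (which also yields existence of the formal solution and exactness of the order under $\val(a)\in\{0,1\}$, $a\not\equiv a(0)$) shows that every coefficient $c_n$ of $W=\sum_n c_n(\sigma)x^n$ is analytic on a common disc in $\sigma$; hence $W\in\CC\{\sigma\}[[x]]$, and the $c_n$ lie in a common Banach algebra $(\EE_{S_\sigma,\mu,0},*_\sigma)$ (Corollary~\ref{cor:nn'0}) on which $\sigma\partial_\sigma$ is a controlled derivation (Corollary~\ref{cor:key}). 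This algebra of analytic-in-$\sigma$ coefficients is where the $x$-summability argument will run.

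Coefficient-wise in $x$, the displayed $W$-equation is an instance of the equations treated in Part~2 with $b=\tfrac12$, $c=\tfrac12$ and $k=1$. Condition $(F)$ fails for it, since the quadratic term $(\partial_x w)^2$ has $i=j=0$, $\alpha=2$ and constant coefficient $1$; this failure, through the anticipative recursion \eqref{equation:proofnoformalun}, is exactly what forces $\hat w$ to diverge in $s$. The decisive point is that the $s$-Borel transform repairs it: being a product in $s$, $(\partial_x w)^2$ is carried to the $\sigma$-convolution $(\partial_x W)*_\sigma(\partial_x W)$, which vanishes at $\sigma=0$ and raises the $\sigma$-Nagumo index through $\|f*_\sigma g\|_{S_\sigma,\mu,n+n'}\le\|f\|_{S_\sigma,\mu,n}\|g\|_{S_\sigma,\mu,n'}$ (Proposition~\ref{proposition:convolution}). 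Thus the $\sigma$-convolutions here play the index-raising role that, in Theorem~\ref{theorem:yes}, condition $(F)$ secured through the $\xi$-convolutions attached to the vanishing coefficients.

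To convert this into summability in $x$, I would Borel-transform in $x$ as well and solve the resulting equation by a fixed point in a Banach space of functions of the two Borel variables $(\sigma,\xi)$ — analytic in $\sigma$ on $S_\sigma$ and of exponential-Nagumo type in $\xi$ over a sector $S(R;d,\theta)$. Under $\BT_x$ the term $x^2\partial_x$ becomes multiplication by $\xi$, while the first-order $x$-derivatives in the quadratic term turn into $\xi$-differentiations estimated by the Key Lemma~\ref{lemma:key}; the linear part is then controlled, in the $s$-degree $n$, by the factor $n-\tfrac12-\tfrac{\xi}{2}$, and the crucial bound is \eqref{equation:nbcsigma} with $b=c=\tfrac12$, $k=1$, giving $|\,n-\tfrac12-\tfrac{\xi}{2}\,|\ge\sigma_0(n+|\xi|)$ on $S(d,\theta)$. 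Evaluating the exceptional set \eqref{equation:SDbck} for these data yields $SD_{1/2,1/2;1}=\{\arg z:z\in\{2,1,3,5,\dots\}\}=\{0\}$, precisely the direction $\RR^+$ excluded in the statement. For every $d\ne0$ the linear operator is therefore boundedly invertible along $d$, and the contraction/majorant scheme of \S\ref{subsection:endproofyes}, now carrying the $*_\sigma$ estimates of Proposition~\ref{proposition:convolution} alongside, produces a solution $\BT_x W\in\EE_d(\OO_0)$ with $\OO_0=\CC\{\sigma\}$. Its $x$-Laplace transform is $W$, so by Theorem~\ref{theorem:ksummable} (or Theorem~\ref{theorem:ksummableexpansion}) $W$ is $1$-summable in $x$ in direction $d$ with parameter $\sigma$, i.e. $W\in\CC\{x\}^d(\OO_0)$, for every $d\in\SS^1\setminus\{0\}$.

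The hard part is the genuinely two-variable last step: the nonlinearity is a double convolution, in $\sigma$ and in $\xi$ simultaneously, so the main technical work is to build a single family of norms controlling analyticity in $\sigma$ together with exponential growth in $\xi$, and to verify that this double-convolution operator is a contraction there. The two index bookkeepings must be made compatible — the $\sigma$-index gained from $*_\sigma$ has to pay for the $\xi$-index consumed by the Key Lemma when the $\partial_x$ factors are estimated — and it is exactly this interplay, rather than any single estimate, that is delicate; it is the point at which the anticipative origin of the $s$-divergence must be absorbed cleanly into the convolution structure.
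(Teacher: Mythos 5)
Your setup is faithful to the paper's up to and including the identification of the framework: the double-Borel picture, the transformed equation $(2\sigma\partial_\sigma+1)W=a(x)+x^2\partial_xW+(\partial_xW)*_\sigma(\partial_xW)$, the values $b=c=\tfrac12$, $k=1$, the exceptional set $SD_{1/2,1/2;1}=\{0\}$, and the final appeal to Theorems~\ref{theorem:ksummable} and~\ref{theorem:ksummableexpansion} all agree with what the paper does. But the proof stops precisely at the step that makes this theorem nontrivial, and you say so yourself: the assertion that the ``contraction/majorant scheme of \S\ref{subsection:endproofyes}, now carrying the $*_\sigma$ estimates of Proposition~\ref{proposition:convolution} alongside, produces a solution $\BT_xW\in\EE_d(\OO_0)$'' is exactly what has to be proved, and your closing paragraph concedes that the compatibility of the two index bookkeepings is open. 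This is a genuine gap, not a routine adaptation. Writing $\BT_xW=\sum_{m}\tilde v_m(\xi)\,\sigma^m/m!$, the coefficients satisfy the recursion \eqref{equation:proofnoconvolution}, in which the operator $\PP=\partial_\xi^2(\xi\,\cdot)$ both consumes Nagumo indices and produces a factor of order $\ell+1$ at degree $\ell$ (Corollary~\ref{cor:keyR}). The indices can be balanced by letting them grow like $4m$, but the factors $(\ell+1)(m-\ell)$ cannot be absorbed: the norms $W_m=\|(2m+1-\xi)\tilde v_m\|_{S,\mu,4m}$ grow factorially, not geometrically. Consequently the scheme of \S\ref{subsection:endproofyes} does not transplant: there the majorant series \eqref{equation:Y} is convergent by the implicit function theorem, whereas here any majorant of the $W_m$ is necessarily a divergent series, and no contraction in a Banach space with a fixed (or uniformly controlled) Nagumo index can close.

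What is missing is the recognition that the target estimate is a Gevrey-$1$ bound $W_m\le C^{m+1}m!$ on the Nagumo norms --- it is exactly this factorial bound that the factor $1/m!$ coming from the $s$-Borel transform converts into the convergence statement \eqref{equation:proofnoBT} --- together with a tool that delivers it. The paper proceeds as follows: first subtract $\hat w_0(s)+\hat w_1(s)x$ (these are themselves divergent Gevrey-$1$ series in $s$; this normalization puts the solution in $x^2\CC[[x]]$ and makes the dangerous linear term enter as $2\hat w_1(s)\partial_xw$), then Borel-transform in $x$ only, derive the recursive inequality \eqref{eqWk} for the $W_m$, and majorize it by the formal solution of the nonlinear Fuchsian ODE \eqref{eqM}, whose coefficients $A(t)$, $B(t)$ lie in $\CC[[t]]_1$. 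The decisive ingredient is then the Maillet-Malgrange theorem in the form of Proposition~\ref{proposition:MM}: the formal solution of such a Fuchsian equation with Gevrey-$1$ coefficients is itself Gevrey-$1$, whence $W_m\le M_m\le C^{m+1}m!$ and \eqref{equation:proofnoBT} follows. Your proposal contains neither this reformulation of the goal nor any substitute for the Maillet-Malgrange step, so as it stands it establishes the theorem only up to its hardest point.
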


\begin{proof}
Let $\hat w_n(s)$ be as given in Proposition~\ref{proposition:proofnoformal}, we may observe that
$$2s\partial_s\hat w_0=a(0)s+\hat w_0+\hat w_1^2.
 $$Therefore, replacing $w$ by $\hat w_0(s)+\hat w_1(s)x+w$ in \eqref{equation:proofnoformalw} yields that
\begin{equation}\label{equation:proofnow}
2s\partial_sw=\alpha(s,x)+w+x^2\partial_xw+2\hat w_1(s)\partial_xw+(\partial_x w)^2\,,
\end{equation}
where $\alpha(s,x)$ is defined as 
$$
\alpha(s,x)=(a(x)-a(0))s+\bigl(\hat w_1(s)-2s\partial_s\hat w_1(s)\bigr)x+\hat w_1(s)x^2\,.
$$
The formal solution of \eqref{equation:proofnow} can be expanded as follows:
$$\hat w^*(s,x)=\sum_{m\ge0}\hat v^*_m(x)s^{m+1}\,,
$$
where  $\hat v_m^*$ satisfies the following relation:
\begin{equation}\label{equation:proofnov*}
\hat v^*_m(x)=\hat v_m(x)-w_{m,0}-w_{m,1}x\in x^2\CC[[x]]_1.
\end{equation}
Let $W^*(\sigma,x)$ be Borel transform w.r.t. $s$ of $\hat w^*(s,x)$, it follows that
$$W^*(\sigma,x)=W(\sigma,x)-\BT\hat w_0(\sigma)-\BT\hat w_1(\sigma)x.
$$
Thus, one needs only to prove that $W^*(\sigma,x)\in\CC\{x\}^d(\OO_0)$ or, thanks to Theorem~\ref{theorem:ksummableexpansion}, it suffices to establish the following property: there exist $S=S(R;d,\theta)$, $\mu\in(0,\infty e^{-id})$ such that
\begin{equation}\label{equation:proofnoBT}
\sum_{m\ge 0}\Bigl\Vert \frac{\BT\hat v^*_m}{m!}\Bigr\Vert_{S,\mu,4 m}\,\sigma^m\in\CC\{\sigma\}\,.
\end{equation}

Let $\tilde w(s,\xi)=\BT w(s,\xi)$. Applying $\BT$ to both sides of \eqref{equation:proofnow}, we have  following convolution partial differential equation:
\begin{equation*}
2s\partial_s\tilde w=\tilde\alpha(s,\xi)+(1+\xi)\tilde w+2\hat w_1(s)\partial_\xi^2(\xi\tilde w)+(\partial_\xi^2\xi\tilde w)^{*2}\,,
\end{equation*}
where
$$
\tilde\alpha(s,\xi)=A(\xi)s+\hat w_1(s)-2s\partial_s\hat w_1(s)+\hat w_1(s)\xi
$$
and
$$
A=\BT(a(x)-a(0)).
$$
Equivalently, if we write $\tilde v_m=\BT(\hat v^*_{m})$ and
$$\tilde\alpha(s,\xi)=\sum_{m\ge 0}\tilde\alpha_m(\xi)s^{m+1},\quad
\PP f(\xi)=\partial_\xi^2(\xi f(\xi)),
$$ it follows that, for all  $m\ge 0$,
\begin{equation}\label{equation:proofnoconvolution}
(2m+1-\xi)\tilde v_m=\tilde \alpha_m+2\sum_{\ell=1}^{m-1}w_{\ell,1}\PP\tilde v_{m-\ell-1}
+\sum_{\ell=0}^{m-1}\PP\tilde v_\ell*\PP\tilde v_{m-\ell-1}\,.
\end{equation}

Since ${\PP}\tilde v_{\ell}=\partial_\xi\tilde v_\ell+\partial(\xi\partial_\xi\tilde v_\ell)$, it follows that
$$
\|{\PP}\tilde v_{\ell}(\xi)\|_{S,\mu,4\ell+2}\le \|\partial_\xi\xi\partial_\xi\tilde v_\ell(\xi)\|_{S,\mu,4\ell+2}+\|\partial_\xi\tilde v_\ell(\xi)\|_{S,\mu,4\ell+1}.
$$
By Corollary~\ref{cor:keyR}, one obtains that, for all $\ell\ge 0$,
\begin{equation}\label{equation:proofnovell}\|{\PP}\tilde v_{\ell}(\xi)\|_{S,\mu,4\ell+2}\le(\ell+1) K\|(2\ell+1-\xi)\tilde v_\ell(\xi)\|_{S,\mu,4\ell},
\end{equation}
where $K$ denotes a positive constant depending of $R$, $|\mu|$ and $C$ which is given by Corollary~\ref{cor:keyR} with
$$P(n,\xi)=\frac {n+1}2-\xi.
$$
If we let $W_m=\|(2m+1-\xi)\tilde v_m(\xi)\|_{S,\mu,4m}$, from equation \eqref{equation:proofnoconvolution} and inequality \eqref{equation:proofnovell}, we find:
\begin{eqnarray}\label{eqWk}
&&W_m \le \|\tilde\alpha_m\|_{S,\mu,4m}+2K\,\sum_{\ell=1}^{m-1}|w_{\ell,1}|(m-\ell)W_{m-\ell-1}\\
&&\qquad\qquad
+K^2\,\sum_{\ell=0}^{m-1}(\ell+1)\,(m-\ell)\,W_\ell\,W_{m-\ell-l},\nonumber
\end{eqnarray}

Let
$$
A(t)=\sum_{m\ge 0}\Vert\tilde\alpha_m\Vert_{S,\mu,4m}t^{m+1},\qquad
B(t)=\sum_{m\ge 0}|w_{m,1}|t^{m+1}\,
$$
and let
$$M(t):=\sum_{m\ge 0}M_mt^{m+1}
$$
be the formal solution of the following Fuchsian differential equation:
\begin{eqnarray}\label{eqM}
y(t)=A(t)+K^2\,(t\partial_ty(t))^2
+2K\,B(t)\,t\partial_t y(t)),
\end{eqnarray}
with $y(0)=0$.
Therefore, relation \eqref{eqWk} implies that the sequence $(W_m)$ is majored by  $(M_m)$.

By Proposition~\ref{proposition:proofnoformal}, we know that $A(t)$, $B(t)\in\CC[[t]]_1$. Thus the formal solution $M(t)$ of the Fuchsian equation \eqref{eqM} will belong to the same Gevrey class as that for the coefficients $A(t)$ and $B(t)$ (cf. Remark \ref{remark:proofno} and Appendix \ref{section:MM} for more details), which completes the proof of \eqref{equation:proofnoBT}. Theorem~\ref{theorem:proofnow} is proved.
\end{proof}

\begin{remark}\label{remark:proofno}
By making use of Malgrange's  approach \cite{Ma}, one can prove the following statement: Any formal solution of an algebraic differential equation with coefficients Gevrey order $\le 1/k$ is at most Gevrey order $1/k$ if the Newton polygon of the variational equation has no slope in interval $(0,k)$; see Appendix \ref{section:MM}.
\end{remark}

\appendix
\section{On Lemma~\ref{lemma:singularities}}\label{section:perturbation}

Instead of  \eqref{equation:xin}, we consider the following {\it perturbation} equation, with a (small) parameter $\epsilon$:
\begin{equation}\label{equation:xinepsilon}
 (\xi-\xi_n)\psi(\xi)=\epsilon B*\psi(\xi)+\epsilon C*(\xi\psi)(\xi)+F(\xi). 
\end{equation}

If we write the solution in the form
$$
\psi(\xi,\epsilon)=\sum_{\ell\ge 0}\psi_\ell(\xi)\,\epsilon^\ell\,,
$$
then comparing the coefficients of $\epsilon^\ell$ in both sides of the equation \eqref{equation:xinepsilon}, one has 
$$
\psi_0(\xi)=\frac{F(\xi)}{\xi-\xi_n}
$$
and, for any $\ell\ge 0$,
\begin{equation}\label{equation:psiell}
\psi_{\ell+1}(\xi)=\frac1{\xi-\xi_n}\,\bigl(B*\psi_{\ell}(\xi)+ C*(\xi\psi_{\ell})(\xi)\bigr).
\end{equation}

By induction on $\ell$, one can easily check that $\psi_\ell$ can be put of the following form:
$$
\psi_\ell(\xi)=\frac{1}{\xi-\xi_\ell}\,\bigl[\,\psi_{\ell,0}(\xi)+\psi_{\ell,1}(\xi)\,\log(\xi-\xi_n)+...+\psi_{\ell,\ell}(\xi)\,\log^\ell(\xi-\xi_n) \,\bigr]\,,
$$
where $\psi_{\ell,j}\in\EE({\Omega_{n-1}})$. Indeed, for any $H\in\EE({\Omega_{n-1}})$ and for all $m\in\NN$, consider the function
$$
H_m:=H*\log^m(\xi-\xi_n)\,,
$$
which is clearly defined and analytic over $\Omega_{n}$ and can be continued to the universal covering $\tilde\Omega_{n-1,n}$ of $\Omega_{n-1,n}:=\Omega_{n-1}\setminus\{\xi_n\}$. Let $\gamma_n:=\gamma_{\xi_n}$ be the continuation operator, intuitively saying `monodromy operator around $\xi=\xi_n$', acting on the set $\OO(\tilde\Omega_{n-1,n})$ and such that $$\gamma_n \log(\xi-\xi_n)=\log(\xi-\xi_n)+2\pi i.
$$ It follows that, for all positive integer $m$,
\begin{equation}\label{equation:Hm}
\gamma_n H_m-H_m=\sum_{j=0}^{m-1}\binom jm (2\pi i)^{m-j}\,H_j\,.
\end{equation}
Obviously, from the fact $H_0\in\OO(\Omega_n)$, that is merely primitive function of $H$, it follows that $\gamma_n H_0=H_0$. If we set:
\begin{equation}\label{equation:Hmsum}
H_m={(2\pi i)^m}\sum_{\ell=0}^mH_{m,\ell}\,\binom{\frac{\log(\xi-\xi_n)}{2\pi i}}\ell
\end{equation}
and suppose that $\gamma_n H_{m,\ell}=H_{m,\ell}$ for all $\ell$, then \eqref{equation:Hm} implies that the coefficients $H_{m,\ell}$ are related as follows:
$$
H_{m,\ell+1}=\sum_{j=\ell}^{m-1}\binom jmH_{j,\ell}\,.
$$
In particular, one may deduce the following formula:
$$
H_{m,m}=H_{m-1,m-1}=...=H_{1,1}=H_{0,0}:=H_0.
$$
On the other side, one may prove that there exist such functions $H_{m,\ell}$, satisfying the equation \eqref{equation:Hmsum}, and to be unique in $\EE(\Omega_{n-1})$. We omit the details of the proof.

\section{On Maillet-Malgrange Theorem}\label{section:MM}

In the following, $k$ denotes a given positive number.

 Let $m\in\NN$, $z=(z_0,...,z_m)$ and let $F(x,z)\in\CC[[x,z]]$ be a power series. Let $\delta=x\frac {d\ }{dx}$ and for all $\phi\in x\CC[[x]]$, let $\Phi=(\phi,\delta\phi,...\delta^m\phi)$. We introduce following linearized operator $L_{F,\phi}$ along $\phi$  by
$$
L_{F,\phi}:=\sum_{i=0}^m{\partial_{z_i}}F(x,\Phi)\,\delta^i\in\CC[[x]][\delta]\,;
$$
therefore one can define the so-called {\it Newton polygon} ${\mathcal N}(L_{F,\phi})$ for $L_{F,\phi}$: this is the convex envelop in $[0,m]\times[0,\infty)$ of the set consisting of all the vertical half-lines starting from $(i,v_i)$ with $v_i=\val_{x=0}{\partial_{z_i}}F(x,\Phi)$, $0\le i\le m$. A differential equation on $\phi$, $F(x,\Phi)=b(x)$, is called to be {\it Fuchsian} type at $x=0$ if $v_m\le v_i$ for all $i=0$, $\cdots$, $m$ or, equivalently, if ${\mathcal N}(L_{F,\phi})\subset [0,m]\times [v_m,\infty)$.

In the meanwhile, for any $\nu\ge 0$, let $\HH_{\nu}$ be the set of  $\hat f:=\sum_{n\ge 0}a_nx^n\in\CC[[x]]_{1/k}$ such that :
$$
\Vert\hat f\Vert_{\nu}:=\sum_{n\ge 0} \vert a_n\vert\, n^\nu\, (n!)^{-1/k}<\infty,
$$
where, by convention, we denote $0^0=1$; thus one gets a Banach space $(\HH_\nu,\Vert\cdot\Vert_\nu)$. A power series $F(x,z)\in\CC[[x,z]]$ will be said to belong to $\HH_{\nu}\{z\}$ if
$$
F(x,z):=\sum_{\bl\in\NN^{m+1}}\hat f_\bl(x)z^\bl
$$
satisfies the following condition:
$$
\sum_{\bl\in\NN^{m+1}}\Vert\hat f\Vert_\nu z^\bl\in\CC\{z\}\,.
$$

Finally for all $\lambda>0$, we write $F_\lambda(x,z)=F(\lambda x,z)$. As one extension of Maillet-Malgrange Theorem \cite[Th\'eor\`eme 1.4]{Ma}, The more details of Remark \ref{remark:proofno} can be stated as follows.

\begin{proposition}\label{proposition:MM}
Let $F\in\CC[[x,z]]$ and suppose  there exists $(\nu,\lambda)\in[0,\infty)\times(0,\infty)$ such that $F_\lambda\in\HH_\nu\{z\}$. Let $\phi\in x\CC[[x]]$ be such that $F(x,\Phi)\in\CC[[x]]_{1/k}$ and $\val_{x=0}(\partial_{z_m}F(x,\Phi))<\infty$. If ${\mathcal N}(L_{F,\phi})$ have no slope with value belonging to interval $(0,k)$, then $\phi\in\CC[[x]]_{1/k}$.
\end{proposition}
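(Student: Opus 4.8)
The plan is to follow Malgrange's functional-analytic scheme, transposed to the $\delta$-calculus ($\delta=x\frac{d}{dx}$) and carried out in the scale of Banach spaces $\HH_\nu$ defined above. The starting point is the identification
\[
\CC[[x]]_{1/k}=\bigcup_{\lambda>0,\ \nu\ge0}\{\,f\in\CC[[x]]:f_\lambda\in\HH_\nu\,\},
\]
which holds because a Gevrey bound $|a_n|\le CA^n(n!)^{1/k}$ is absorbed, after replacing $x$ by $\lambda x$ with $\lambda$ small, into the summable weight $n^\nu(n!)^{-1/k}$. Hence it suffices to produce one pair $(\lambda,\nu)$ with $\phi_\lambda\in\HH_\nu$. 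First I would record the two mapping properties on which the whole estimate rests: $\delta$ maps $\HH_\nu$ into $\HH_{\nu-1}$ with $\|\delta f\|_{\nu-1}=\|f\|_\nu$ (it multiplies the $n$-th coefficient by $n$), while multiplication by $x$ is bounded and smoothing on this scale, gaining a factor comparable to $n^{-1/k}$. These are precisely the two operations out of which $L_{F,\phi}$ is assembled.

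Next I would write down the recursion for the coefficients $a_N$ of $\phi$. Setting $P_i(x)=\partial_{z_i}F(x,\Phi)$, $v_i=\val_{x=0}P_i$ and $v_{\min}=\min_i v_i$, the coefficient of $x^{N+v_{\min}}$ in $F(x,\Phi)$ isolates $a_N$ as its highest-index unknown, with multiplier the slope-zero indicial polynomial $\chi(N)=\sum_{i:v_i=v_{\min}}([x^{v_{\min}}]P_i)\,N^i$. The hypothesis $\val_{x=0}(\partial_{z_m}F(x,\Phi))<\infty$ makes $(m,v_m)$ a genuine vertex of ${\mathcal N}(L_{F,\phi})$, so $\chi$ is a nonzero polynomial; writing $d_0=\deg\chi$ one has $|\chi(N)|\sim c\,N^{d_0}$ and in particular $\chi(N)\ne0$ for $N$ beyond some $N_0$. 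The recursion then takes the schematic form
\[
\chi(N)\,a_N=b_{N+v_{\min}}-\big(\text{contributions of }a_{N'},\ N'<N\big),
\]
where $b=F(x,\Phi)$ is Gevrey-$1/k$ by hypothesis and the remaining contributions are produced by the other edges of the Newton polygon together with the nonlinear part of $F$.

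The heart of the argument is to turn this recursion into a right inverse of the linearized operator, bounded on the Gevrey scale, and to close a Banach fixed-point estimate. After truncating $\phi$ past order $N_0$, the unknown tail $\psi$ satisfies $L_{F,\phi}\psi+Q(\psi)=g$ with $g\in\HH_\nu$ (for a suitable $\nu$, after rescaling) and $Q$ the superlinear remainder. The right inverse is built by dividing by $\chi(N)$, and the crucial bookkeeping is the following: a term $x^j\delta^i$ of $L_{F,\phi}$ links $a_N$ to $a_{N-\ell}$ with $\ell=j-v_{\min}$ and multiplier $O(N^{i})$, the binding case being $j=v_i$, for which, after dividing by $\chi(N)\sim N^{d_0}$, the factor $O(N^{\,i-d_0})$ against a gap $\ell=v_i-v_{\min}$ is dominated by $(N!/(N-\ell)!)^{1/k}\asymp N^{\ell/k}$ exactly when the slope $(v_i-v_{\min})/(i-d_0)$ is $\ge k$. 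Thus the edges of ${\mathcal N}(L_{F,\phi})$ to the left of the indicial corner are absorbed by the division by $\chi$, the edges of slope $\ge k$ keep every term within a single space $\HH_{\nu'}$ with controlled norm, and a slope strictly between $0$ and $k$ is precisely what would force a Gevrey order worse than $1/k$; the hypothesis forbids it. Choosing $\lambda$ small enough to make $Q$ a contraction on a ball of $\HH_\nu$ and to absorb the radius of convergence of $F_\lambda$, the Banach fixed-point theorem yields $\psi\in\HH_\nu$; by uniqueness of the formal solution $\psi$ is the tail of $\phi$, whence $\phi\in\CC[[x]]_{1/k}$.

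The step I expect to be the main obstacle is exactly this quantitative construction of the right inverse: one has to verify that division by $\chi(N)$ combined with the $\delta$- and $x$-bookkeeping keeps each contribution inside one fixed space $\HH_{\nu'}$ with a uniformly controlled operator norm, and that the admissibility of this estimate is equivalent to ${\mathcal N}(L_{F,\phi})$ having no slope in $(0,k)$. The remaining points are routine by comparison: the finitely many indices $N\le N_0$ where $\chi$ might vanish are handled by the initial truncation, and the verification that the superlinear remainder $Q$ maps a small ball of $\HH_\nu$ into itself contractively is where the hypothesis $F_\lambda\in\HH_\nu\{z\}$ enters.
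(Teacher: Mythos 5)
Your proposal is correct and follows essentially the same route as the paper: the paper's entire proof of this proposition is the single sentence ``It suffices to adapt the Malgrange's idea \cite{Ma} to this situation. The details are left to the interested reader,'' and your sketch --- the scale of spaces $\HH_\nu$, the isometry $\delta\colon\HH_\nu\to\HH_{\nu-1}$, the smoothing effect of multiplication by $x$, division by the indicial polynomial $\chi(N)$, the verification that the bound $N^{i-d_0}\lesssim N^{\ell/k}$ holds exactly when no slope of ${\mathcal N}(L_{F,\phi})$ lies in $(0,k)$, and the concluding fixed-point argument --- is precisely the intended adaptation of Malgrange's method. If anything, you supply more detail than the paper itself does, and your identification of the quantitative right-inverse estimate as the crux of the argument is accurate.
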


\begin{proof}
It suffices to adapt the Malgrange's idea \cite{Ma} to this situation. The details are left to the interested reader.
\end{proof}

\begin{remark}\label{remark:B.1}
Malgrange's approach \cite{Ma} can be extended to $q$-difference-differential equations and ultra-metric cases, respectively (cf. \cite{Zh} and \cite{dV}). It is not difficult to think up some generalization of these results in a similar way as that in Proposition \ref{proposition:MM} above.
\end{remark}

\bigskip

\bigskip

{\bf Acknowledgements}

{\it The research supported partially by the NSFC Grants (No 10401028 and 10631020) and the 
cooperation program PICS of CNRS. The authors would like to thank J.-P. Ramis for his encouragement and support.}

\bigskip

\bigskip

\end{document}